\theoremstyle{plain}
\newtheorem{theorem}{Theorem}[section]
\newtheorem{lemma}[theorem]{Lemma}
\newtheorem{cor}[theorem]{Corollary}
\newtheorem{prop}[theorem]{Proposition}
\newtheorem{thmx}{Theorem}[section]
\theoremstyle{definition}
\newtheorem{remark}[theorem]{Remark}
\numberwithin{equation}{section}
\def\max{{\text{max}}}
\def\N{\mathbb{N}}
\def\P{\mathbb{P}}
\def\R{\mathbb{R}}
\def\Z{\mathbb{Z}}
\def\K{\mathbb{K}}
\def\cF{{\mathcal F}}
\def\cM{{\mathcal M}}
\def\cX{{\mathcal X}}
\def\cY{{\mathcal Y}}
\def\om{\omega}
\def\Om{\Omega}
\def\de{\delta}
\def\var{\varepsilon}
\begin{document}

\title[Intermittency in Markov random networks]{
Intermittent synchronization in finite-state random networks under Markov perturbations}

\author[A. Berger]{Arno Berger}
\address{A. Berger: Department of Mathematical \& Statistical Sciences,
    University of Alberta, Edmonton, Alberta, Canada T6G 2G1} \email{berger@ualberta.ca}

\author[H. Qian]{Hong Qian}
\address{H. Qian: Department of Applied Mathematics, University of Washington, Seattle, WA 98195,
    USA} \email{hqian@u.washington.edu}

\author[S. Wang]{Shirou Wang*}
\address{S. Wang: Department of Mathematical \& Statistical Sciences,
    University of Alberta, Edmonton, Alberta, Canada T6G 2G1} \email{shirou@ualberta.ca}

\author[Y. Yi]{Yingfei Yi}
\address{Y. Yi: Department of Mathematical
    \& Statistical Sciences, University of Alberta, Edmonton, Alberta,
    Canada T6G 2G1,  and School of Mathematics, Jilin University,
    Changchun 130012, PRC} \email{yingfei@ualberta.ca}

\begin{flushleft}
{\it Comm. Math. Phys.},  to appear

\vspace{5mm}
\end{flushleft}

\subjclass[2000]{Primary {37A50}; Secondary {37H05, 34F05, 60J10}}

\keywords{Markov random network, Markov perturbation, convergence in
distribution, high-probability synchronization, low-probability desynchronization}

\thanks{* Corresponding author.}

\thanks{A.\ Berger was supported by an NSERC Discovery Grant. H.\ Qian
  was supported by the Olga Jung Wan Endowed Professorship. S.\ Wang
  was partially supported by PIMS PTCS, NSFC grants 11771026,
  11471344, and by the PIMS site at the University of Washington
  through NSF grant DMS-1712701. Y.\ Yi was partially supported by an
  NSERC Discovery Grant, a faculty development grant from the
  University of Alberta, and a Scholarship from Jilin University. All authors were supported in part by a PIMS CRG
 grant.}

\begin{abstract} By introducing extrinsic noise as well as intrinsic
  uncertainty into a network with stochastic events, this paper
  studies the dynamics of the resulting {\em Markov random network}
  and characterizes a novel phenomenon of intermittent synchronization
  and desynchronization that is due to an interplay of the two forms
  of randomness in the system. On a finite state space and in discrete
  time, the network allows for unperturbed (or ``deterministic'')
  randomness that represents the extrinsic noise but also for small
  intrinsic uncertainties modelled by a Markov perturbation. It is shown that if the deterministic
  random network is synchronized (resp., uniformly
  synchronized), then for almost all realizations of its
  extrinsic noise the stochastic trajectories of the perturbed network
  synchronize along almost all (resp., along all) time sequences
  after a certain time, with high probability. That is, both
  the probability of synchronization and the proportion of time spent
  in synchrony are arbitrarily close to one. Under smooth Markov
  perturbations, high-probability synchronization and low-probability desynchronization occur intermittently in time.
  If the perturbation is $C^m$ ($m \ge 1$) in
  $\var$, where $\var$ is a perturbation parameter, then the relative
  frequencies of synchronization with probability $1-O(\var^{\ell})$ and
  of desynchronization with probability $O(\var^{\ell})$ can both be
  precisely described for $1\le \ell\le m$ via an asymptotic
  expansion of the invariant distribution. Existence and
  uniqueness of invariant distributions are established, as well as their convergence
  as $\var \to 0$. An explicit asymptotic expansion is
  derived. Ergodicity of the extrinsic noise dynamics is seen to be crucial for
  the characterization of (de)synchronization sets
  and their respective relative frequencies. An example of a smooth
  Markov perturbation of a synchronized probabilistic Boolean network is provided to
  illustrate the intermittency between high-probability
  synchronization and low-probability desynchronization.
\end{abstract}

\maketitle

\section{Introduction}
Physics, engineering, and sciences of complex systems and
processes often encounter network dynamics that are subject to
uncertainties from ``individuals'' in a population but also to random
influences from the surrounding environment, referred to respectively
as {\em intrinsic\/} and {\em extrinsic\/} noise. While the former is
often due to internal complexities of the individuals one studies, the
latter reflects the unpredictable world one lives in
\cite{MQY,ye-qian,ye-wang-qian}. This paper proposes and develops the
theory of {\em Markov random networks\/} as an appropriate conceptual
framework that distinguishes intrinsic and extrinsic noise and
incorporates them into a comprehensive dynamical theory. It has been speculated
\cite{J} that whereas extrinsic noise may cause ``noise-induced
synchronization'', a familiar scenario in the context of random
dynamical systems \cite{FGS2017,HQWYY,Newman1}, intrinsic noise
will drive synchronized individuals apart. The present work provides the
first systematic analysis of this phenomenon. Adopting a simple
discrete-time, finite-state framework for the stochastic dynamics on a
network allows for a treatment that goes well beyond the standard approach
available for non-autonomous or random dynamics on a more general
space, and in particular enables an in-depth analysis of the
intermittency between synchronization and desynchronization.

Let  $S=\{s_1,\ldots,s_k\}$ be a finite set endowed with the discrete topology, and $\Theta:=(\Om,\cF,\mu,\theta)$ an invertible metric
dynamical system, that is, $(\Om,\cF,\mu)$ is a standard measure space with $\mu
(\Om) = 1$, and $\theta:\Om\to\Om$ is an invertible ergodic
$\mu$-preserving transformation. The view adopted herein
is that $\Theta$ provides a model of the extrinsic noise. Call a stochastic process $\cX =
(X_n)_{n\in \N_0}$ with state space $S\times \Om$ a {\em
  Markov random network\/} (MRN) if
\begin{enumerate}
\item[(MRN1)] $\cX$ is measurable in distribution, that is,  $\om \mapsto {\P}
  \bigl\{X_{n}=(s_i,\theta^{n}\om)|X_0=(s_j,\om)\bigr\}$ is
  $\cF$-measurable for all $n\in \N_0$ and $i,j \in \{1, \ldots ,
  k\}$;
\item[(MRN2)] $\cX$ is stochastic over $\Theta$, that is,
$$
\sum\nolimits_{i=1}^k{\P}\big\{X_{n}=(s_i,\theta^{n-m}\om)|X_m=(s_j,\om)\big\}=1
\quad\forall j\in\{1,\ldots,k\}
$$
for all $n\ge m$ and $\mu$-a.e.\ $\om \in \Om$;
\item[(MRN3)] $\cX$ has the Markov property over $\Theta$, that is,
\begin{align*}
& {\P}
\big\{X_{n+1}=(s_{i_{n+1}},\theta^{n+1}\omega)\, |\,
X_n=(s_{i_n},\theta^n\omega)\big\}\\
&=   {\P} \big\{X_{n+1}=(s_{i_{n+1}},\theta^{n+1}\omega)\,
|\,X_0=(s_{i_0},\omega),\ldots,
X_n=(s_{i_n},\theta^n\omega)\big\} \nonumber
\end{align*}
for all $n\in \N_0$, $i_0, \ldots, i_n, i_{n+1}\in \{1, \ldots , k\}$
and $\mu$-a.e.\ $\om \in \Om$.
\end{enumerate}
Given an MRN $\cX$, let
$$
p_{i,j} (n, \om) = {\P}
\bigl\{X_{n}=(s_i,\theta^{n}\om)|X_0=(s_j,\om)\bigr \}
\quad \forall i,j \in \{1, \ldots, k\}, n \in \N_0, \om \in \Om \, .
$$
By properties (MRN 2) and (MRN 3), for every $n\in \N_0$ and $\mu$-a.e.\
$\om \in \Om$ the matrix
$$
P_{\cX} (n, \omega) :=\bigl( p_{i,j} (n,\om)\bigr)_{1\le
  i,j \le k} \in [0,1]^{k\times k}
$$
is (column-)stochastic, and the $2^{\N_0}\otimes \cF$-measurable
function $P_{\cX}:\N_0 \times \Om\to \R^{k\times k}$ has the {\em
  cocycle property}, that is, for $\mu$-a.e.\ $\om \in \Om$,
$$
P_{\cX}(m+n, \om) = P_{\cX}(m,\theta^n \om) P_{\cX}(n,\om) \quad \forall m,n \in \N_0 \, .
$$
Call $P_{\cX}$, an example of a Markov cocycle, the {\em transition
cocycle\/} of $\cX$; see also Proposition \ref{prop210} below. Furthermore, let
$$
p_i(n,\om) =  {\P}   \bigl\{X_{n}=(s_i,\theta^{n}\om)| X_n \in S \times
\{\theta^n \om \} \bigr\}
\quad \forall i\in \{1, \ldots, k\}, n \in \N_0, \om \in \Om \, .
$$
With this, for every $n\in \N_0$ and $\mu$-a.e.\ $\om \in \Om$ the vector
$$
p_{\cX} (n,\om) :=  \bigl( p_i (n,\om)\bigr)_{1\le i \le k} \in [0,1]^k
$$
can be thought of as the distribution of $\cX$ on the fibre $S\times
\{\theta^n \omega\}$; see also Proposition \ref{prop210} below.

A special class of MRN are {\em deterministic random networks\/}
(DRN) for which, by definition, the transition cocycle is {\em deterministic\/} in the
sense that for every $n\in \N_0$ and $\mu$-a.e.\ $\om \in \Om$ each
$p_{i,j} (n,\om)$ equals either $0$ or $1$, that is, $P_{\cX} (n,\om) \in
\{0,1\}^{k\times k}$. Throughout this paper, for the sake of clarity,
a DRN typically is denoted by $\cX^0$, and its transition cocycle by
$P^0 := P_{\cX^0}$. Usage of the term ``deterministic'' emphasizes the
absence of (internal) stochasticity between individual states in
$S$. To put this terminology into context, note that every DRN $\cX^0$
uniquely defines a so-called {\em discrete-time finite-state random dynamical system\/} (dtfs-RDS) on
$S$ over $\Theta$. Specifically, for all $n\in \N_0$ and $\mu$-a.e. $\om \in
\Om$ define maps $T_{\cX^0} = T_{\cX^0}(n,\om): S\to S$ such that for
all $i,j\in \{1,\ldots, k\}$,
\begin{eqnarray}\label{deter_cocy-RDS}
    T_{\cX^0} (n,\om)(s_j)=s_i \quad {\text {if and only if}} \quad
    p^0_{i,j}(n,\om)=1\, .
\end{eqnarray}
It is easy to see that $T_{\cX^0}$ inherits the cocycle property from $P^0$, that is,
$$
T_{\cX^0} (m+n , \om) = T_{\cX^0} (m,\theta^n \om) \circ T_{\cX^0}
(n,\om)
$$
for all $m,n\in \N_0$ and $\mu$-a.e.\ $\om \in \Om$, and hence is a
dtfs-RDS in the sense of \cite{arnold98book}. Conversely, every dtfs-RDS on
$S$ over $\Theta$ induces a DRN. Prominent examples of DRN are, for
instance, probabilistic Boolean networks that model gene regulations \cite{SD}.

Adopting terminology from \cite{HQWYY} say that a DRN $\cX^0$ is {\em
  synchronized\/} if there exists an $\cF$-measurable function $N:\Om
\to \N$ such that for all $i,j\in \{1, \ldots , k\}$ and $\mu$-a.e.\
$\om \in \Om$,
$$
T_{\cX^0} (n,\om) (s_i) = T_{\cX^0} (n,\omega) (s_j) \quad \forall n\ge N(\om)\, ;
$$
if $N$ is constant $\mu$-a.e.\ on $\Om$ then $\cX^0$ is {\em uniformly
 synchronized}. Synchronization of DRN is characterized in \cite{HQWYY}
in terms of the Lyapunov exponents of the associated
(deterministic) transition cocycle $P^0$; in particular, it is shown that
$\cX^0$ is synchronized if and only if the Lyapunov exponent $0$ of
$P^0$ is simple.

The present paper focuses on MRN that are
small perturbations of synchronized DRN. The main goal is to analyze
the effect small perturbations have on synchronization. To be specific,
given a DRN $\cX^0$, call a family $\{\cX^{\varepsilon}:
\varepsilon\ge 0\}$ of MRN a {\em Markov perturbation\/} of $\cX^0$ if
for $\mu$-a.e.\ $\om \in \Om$
$$
 |P_{\cX^{\varepsilon}} (1,\omega) -P^0(1,\omega)
 |\le\varepsilon \quad\forall \varepsilon \ge 0 \, ;
$$
here $|\, \cdot \,|$ denotes the norm on $\R^{k\times k}$ induced by
the $\ell^1$-norm on $\R^k$.

Markov perturbations of a DRN have a clear physical
meaning: With small intrinsic noise added to the network (which itself
allows only for extrinsic noise), transitions between
states now occur with probabilities at most $O(\var)$ or
at least $1-O(\var)$, rather than being impossible or certain,
respectively. Pertinent examples include probabilistic Boolean networks
with random gene perturbations, where each gene has a small
probability of flipping its value --- naturally, this yields a Markov
perturbation of the original DRN.
In general, existence, uniqueness, and attractiveness of an
invariant distribution of an MRN all require certain monotonicity or
Perron--Frobenius-type assumptions \cite{arnold-ch98, arnold94positive}. As the first main result of this
work illustrates, however,
these assumptions are satisfied automatically for any
Markov perturbation of a synchronized DRN. In the statement,
$\Sigma_1^+$ denotes the set of all probability distributions (or
vectors) on $S$, and $e_1, \ldots , e_k$ is the canonical basis of $\R^k$; see Section \ref{sec2}
below for precise definitions of all technical terms.

\begin{thmx} \label{thm:uniq_inv_distri}
Let $\{\cX^{\var}\}$ be a Markov perturbation of a synchronized DRN
$\cX^0$. Then, for every
sufficiently small $\var \ge 0$, there exists an invariant
distribution $p_{\var}:\Om \to \Sigma_1^+$ of $\cX^{\var}$, i.e.,
$P_{\cX^{\var}} (1,\om) p_{\var} (\om) = p_{\var} (\theta \om)$, with the following properties:
\begin{enumerate}
\item $p_{\var}$ is pull-back attracting for $\cX^{\var}$, that is,
  for every $q\in\Sigma_1^+$ and 
for $\mu$-a.e.\ $\om \in \Om$,
$$ \lim\nolimits_{n \to \infty} |P_{\cX^{\var}}
(n,\theta^{-n}\om) q - p_{\var}(\om)| = 0\, ;$$
\item $p_{\var}$ is forward attracting for $\cX^{\var}$, that is, for every $q\in\Sigma_1^+$ and 
$\mu$-a.e.\ $\om \in \Om$,
$$ 
\lim\nolimits_{n \to \infty} |P_{\cX^{\var}} (n,\om) q  - p_{\var}
(\theta^n \om)| = 0 \, ;
$$
\item $p_{\var}$ is continuous at $\var = 0$, that is, there exists an
  $\cF$-measurable function $J:\Om \to \{1, \ldots , k\}$ such that
  for $\mu$-a.e.\ $\om \in \Om$,
\begin{equation}\label{limit-distribution}
\lim\nolimits_{\var \to 0} p_{\var} (\om) = e_{J(\om)} = p_0 (\om) \, .
\end{equation}
\end{enumerate}
Moreover, if $\cX^0$ is uniformly synchronized then the convergence in
{\rm (\ref{limit-distribution})} is uniform $\mu$-a.e.\ on $\Om$.
\end{thmx}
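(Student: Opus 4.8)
The plan is to reduce every assertion to the contraction of the Dobrushin ergodicity coefficient
\[
\kappa(P) := \tfrac12 \max_{1\le j,j'\le k} |P e_j - P e_{j'}|
\]
of (products of) the transition cocycle, where $|\cdot|$ is the $\ell^1$-norm throughout. I would record three standard facts about column-stochastic matrices: $\kappa$ is submultiplicative, $\kappa(PQ)\le\kappa(P)\kappa(Q)$; it controls images, $\diam(P\Si_1^+)=2\kappa(P)$ and $|Pu-Pv|\le\kappa(P)\,|u-v|$ for $u,v\in\Si_1^+$; and it is $1$-Lipschitz in the induced matrix norm, $|\kappa(P)-\kappa(Q)|\le|P-Q|$. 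The engine of the proof is a single perturbation estimate. If $\cX^0$ synchronizes at a base point $\psi$ within $N(\psi)$ steps, then $T_{\cX^0}(N(\psi),\psi)$ is constant, so $P^0(N(\psi),\psi)$ has identical columns and $\kappa(P^0(N(\psi),\psi))=0$; telescoping the single-step bound $|P_{\cX^\var}(1,\cdot)-P^0(1,\cdot)|\le\var$ across the $N(\psi)$ stochastic factors gives $|P_{\cX^\var}(N(\psi),\psi)-P^0(N(\psi),\psi)|\le N(\psi)\var$, and hence $\kappa(P_{\cX^\var}(N(\psi),\psi))\le N(\psi)\var$ by the Lipschitz property. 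Thus any synchronization window of length $\le L$ contracts by a factor $\le L\var\le\tfrac12$ once $\var\le\var_0:=1/(2L)$.

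For existence and pull-back attraction (i), I would fix $L$ with $\mu\{N\le L\}>0$ and the corresponding $\var_0$. Writing $F_n:=P_{\cX^\var}(n,\theta^{-n}\om)=\prod_{j=1}^n P_{\cX^\var}(1,\theta^{-j}\om)$, the cocycle identity $F_{n+1}=F_n\,P_{\cX^\var}(1,\theta^{-(n+1)}\om)$ shows that the compact sets $F_n\Si_1^+$ are nested, with $\diam(F_n\Si_1^+)=2\kappa(F_n)$. The crux is to prove $\kappa(F_n)\to0$. A consecutive block of factors with right-hand base point $\theta^{-b}\om$ and length $N(\theta^{-b}\om)$ equals $P^0(N(\theta^{-b}\om),\theta^{-b}\om)$ in the deterministic case, hence is a good window with $\kappa\le\tfrac12$ whenever $N(\theta^{-b}\om)\le L$. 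By the Birkhoff ergodic theorem applied to the indicator of $\{N\le L\}$ along the ergodic orbit of $\theta^{-1}$, such endpoints $b$ have positive density $\mu\{N\le L\}$ in $\{1,\dots,n\}$, so a greedy selection of disjoint good windows (each of length $\le L$) yields $g_n\to\infty$ of them, while the remaining filler factors contribute $\kappa\le1$. Submultiplicativity then gives $\kappa(F_n)\le2^{-g_n}\to0$, so $\bigcap_n F_n\Si_1^+$ is a single point $p_\var(\om)$, a measurable $q$-independent limit of $F_n q$; this is exactly (i). Invariance $P_{\cX^\var}(1,\om)p_\var(\om)=p_\var(\theta\om)$ follows by passing to the limit in $P_{\cX^\var}(1,\om)F_n=P_{\cX^\var}(n+1,\theta^{-n}\om)$, and uniqueness is immediate since any invariant $p'$ obeys $p'(\om)=F_n\,p'(\theta^{-n}\om)\in F_n\Si_1^+$.

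Forward attraction (ii) uses the same coefficient: from the iterated invariance $p_\var(\theta^n\om)=P_{\cX^\var}(n,\om)p_\var(\om)$ one gets $|P_{\cX^\var}(n,\om)q-p_\var(\theta^n\om)|\le\kappa(P_{\cX^\var}(n,\om))\,|q-p_\var(\om)|$, and $\kappa(P_{\cX^\var}(n,\om))\to0$ by rerunning the block-and-Birkhoff count for the forward product, now with good windows at forward base points $\theta^m\om$ satisfying $N(\theta^m\om)\le L$. For continuity at $\var=0$ (iii), I would first identify $p_0$: by ergodicity the backward orbit of a.e.\ $\om$ meets $\{N\le L\}$ at some index $n_0=n_0(\om)\ge L$, so that $n_0\ge N(\theta^{-n_0}\om)$ and $P^0(n_0,\theta^{-n_0}\om)$ is rank one with all columns equal to $e_{J(\om)}$ for a measurable $J:\Om\to\{1,\dots,k\}$, giving $p_0(\om)=e_{J(\om)}$. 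Combining the invariance identity $p_\var(\om)=P_{\cX^\var}(n_0,\theta^{-n_0}\om)\,p_\var(\theta^{-n_0}\om)$ with $e_{J(\om)}=P^0(n_0,\theta^{-n_0}\om)\,p_\var(\theta^{-n_0}\om)$ and the telescoping estimate then yields
\[
|p_\var(\om)-e_{J(\om)}| = \big|\big(P_{\cX^\var}(n_0,\theta^{-n_0}\om)-P^0(n_0,\theta^{-n_0}\om)\big)\,p_\var(\theta^{-n_0}\om)\big| \le n_0(\om)\,\var ,
\]
which tends to $0$ as $\var\to0$, proving (iii). If $\cX^0$ is uniformly synchronized then $N\le N_0$ is bounded, so $n_0(\om)\le N_0$ may be chosen constant and the bound $n_0(\om)\var$ becomes the uniform bound $N_0\var$, giving uniform convergence.

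I expect the main obstacle to be that $N$ is in general unbounded, so for a fixed $\var$ the naive estimate $N(\om)\var$ over a single window need not be small; this is precisely where ergodicity is indispensable. The resolution is to choose the threshold $L$ first and only then $\var_0=1/(2L)$, so that short good windows are contracted uniformly, while Birkhoff's theorem guarantees that a positive density of such windows recurs along almost every orbit, forcing both $\kappa(F_n)$ and $\kappa(P_{\cX^\var}(n,\om))$ to zero.
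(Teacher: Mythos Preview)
Your argument is correct and complete. The route differs from the paper's in packaging rather than in substance, but the difference is worth noting.

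The paper proceeds via Lyapunov exponents: it first shows (Lemma~3.3) that $\lambda_{\cX^{\var}}(\om,v)<0$ for $v\in\Sigma_0$ and all small $\var$, using the same telescoping bound $|P_{\cX^\var}(m,\cdot)-P^0(m,\cdot)|\le m\var$ and the same Birkhoff-based selection of well-separated short synchronization windows that you use. It then invokes a separate general equivalence (Lemma~2.5, whose proof uses Poincar\'e recurrence) to pass from ``convergence in distribution'' to the existence of a pull-back and forward attracting invariant distribution. Part~(iii) is handled in the paper exactly as you do, via the pull-back synchronization time and the $n_0(\om)\var$ estimate.

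Your Dobrushin-coefficient approach bypasses both the Lyapunov-exponent formalism and the abstract equivalence lemma: you construct $p_\var$ directly as the intersection of the nested images $F_n\Sigma_1^+$, with $\kappa(F_n)\le 2^{-g_n}\to 0$ doing the work that the negative Lyapunov exponent does in the paper. This is more elementary and self-contained, and it gives an explicit geometric contraction rate rather than an asymptotic exponent. What the paper's route buys is the continuity statement $\lim_{\var\to 0}\lambda_{\cX^\var}=\lambda^0$ as a byproduct of independent interest, and a cleaner fit with the random-dynamical-systems framework used elsewhere in the article. The underlying combinatorics---choose $L$ with $\mu\{N\le L\}>0$, then $\var_0=O(1/L)$, then harvest disjoint contracting blocks along the orbit---is identical in both proofs.
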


Theorem \ref{thm:uniq_inv_distri} is proved by analyzing Lyapunov exponents of the
transition cocycle $P_{\cX^{\var}}$. Specifically, it is the continuity of
Lyapunov exponents of $P_{\cX^{\var}}$ as $\var\to 0$ that yields the
simplicity of the Lyapunov exponent $0$ of $P_{\cX^{\var}}$ for
sufficiently small $\var$. The reader may want to recall that
Lyapunov exponents of  random cocycles in general are discontinuous with respect to  generic
perturbations \cite{huang-yi2012, viana2020}. As will be seen, the continuity of Lyapunov exponents
in the context of Theorem \ref{thm:uniq_inv_distri} is due to a
contraction property of the unperturbed cocycle $P^0$.
The essence of Theorem \ref{thm:uniq_inv_distri}, then, is that this contraction property
is preserved under Markov perturbations at the level of
distributions. At the level of stochastic trajectories, the property is reflected
by a high probability of synchronization. The second main result
states that high-probability synchronization among stochastic
trajectories prevails for large fractions of time.

\begin{thmx} \label{thm:non-unif_syn}
Let $\{\cX^{\var}\}$ be a Markov perturbation of a synchronized DRN
$\cX^0$. Then, for every $\delta>0$ there exist
$\varepsilon_{\delta}>0$ and $E_{\delta} : \Om \to 2^{\N}$ such that
for $\mu$-a.e.\ $\om \in \Om$,
\begin{enumerate}
\item $E_{\delta}(\om)$ has large density, that is,
$$
\liminf \nolimits_{n\to \infty} \frac{\# (E_{\delta} (\om) \cap \{1,\ldots ,
  n\})}{n}>1 - \delta \, ;
$$
\item any two independent copies $\cX$, $\cY$ of $\cX^{\var}$ with
  $0\le \var < \var_{\delta}$ are very likely synchronized on
  $E_{\delta}(\om)$, that is,
$$
\P \bigl\{   X_n = Y_n  | X_0, Y_0 \in S \times \{\om\}\bigr\}>1 -
\delta \quad \forall n \in E_{\delta} (\om)\, .
$$
\end{enumerate}
Moreover, if $\cX^0$ is uniformly synchronized then $E_\de(\om)$ is co-finite.
\end{thmx}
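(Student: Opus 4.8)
The plan is to reduce the trajectory statement to a purely distributional one and then to control two error terms uniformly in $\var$. First I would record that, conditioning on $X_0,Y_0\in S\times\{\om\}$ with respective fibre distributions $v,v'\in\Sigma_1^+$, the independence of $\cX,\cY$ together with the fact that both $\Om$-components equal $\theta^n\om$ gives
\[
\P\{X_n=Y_n\mid X_0,Y_0\in S\times\{\om\}\}=\langle P_{\cX^\var}(n,\om)v,\;P_{\cX^\var}(n,\om)v'\rangle .
\]
Writing $i^\ast=J(\theta^n\om)$, if $|P_{\cX^\var}(n,\om)u-e_{i^\ast}|<\delta/2$ for every $u\in\Sigma_1^+$, then the $i^\ast$-coordinate of each factor exceeds $1-\delta/2$, so the inner product exceeds $(1-\delta/2)^2>1-\delta$. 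Thus it suffices to prove $\sup_{u\in\Sigma_1^+}|P_{\cX^\var}(n,\om)u-e_{J(\theta^n\om)}|<\delta/2$ for all $n\in E_\delta(\om)$ and all $\var<\var_\delta$. Using $p_\var(\theta^n\om)=P_{\cX^\var}(n,\om)p_\var(\om)$ (invariance), I split this into a \emph{contraction term} $\sup_u|P_{\cX^\var}(n,\om)u-p_\var(\theta^n\om)|$ and a \emph{continuity term} $|p_\var(\theta^n\om)-e_{J(\theta^n\om)}|$, aiming to make each $<\delta/4$.

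For the contraction term I would use the Dobrushin coefficient $\tau(A)=\tfrac12\max_{j,j'}|A(e_j-e_{j'})|$, which is submultiplicative and satisfies $\sup_{u,u'}|Au-Au'|\le 2\tau(A)$ on $\Sigma_1^+$; since $p_\var(\theta^n\om)$ lies in $P_{\cX^\var}(n,\om)\Sigma_1^+$, the contraction term is at most $2\tau(P_{\cX^\var}(n,\om))$. The cocycle property and submultiplicativity force $\tau(P_{\cX^\var}(n,\om))$ to be nonincreasing in $n$. The decisive estimate is that $\tau(P^0(m,\om'))=0$ for $m\ge N(\om')$ (full synchronization of $\cX^0$), while the Markov-perturbation bound telescopes to $|P_{\cX^\var}(m,\om')-P^0(m,\om')|\le m\var$, whence $\tau(P_{\cX^\var}(m,\om'))\le m\var$. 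Choosing $N_\ast$ with $\mu\{N\le N_\ast\}>0$ and $\var_\delta\le\delta/(8N_\ast)$, I let $n_0(\om)$ be the first entry time of the orbit into $\{N\le N_\ast\}$ (finite for $\mu$-a.e.\ $\om$ since $\theta$ is ergodic); factoring $P_{\cX^\var}(n,\om)$ through $P_{\cX^\var}(N_\ast,\theta^{n_0(\om)}\om)$ then yields $\tau(P_{\cX^\var}(n,\om))\le N_\ast\var<\delta/8$, hence contraction term $<\delta/4$, for all $n\ge N_1(\om):=n_0(\om)+N_\ast$ and \emph{all} $\var<\var_\delta$. Crucially $N_1(\om)$ is independent of $\var$.

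For the continuity term I would invoke Theorem \ref{thm:uniq_inv_distri}(3): since $p_\var\to e_{J}$ $\mu$-a.e.\ as $\var\to0$, the envelope $\phi_{\var_0}(\om')=\sup_{0\le\var\le\var_0}|p_\var(\om')-e_{J(\om')}|$ decreases to $0$ a.e., so Egorov's theorem produces a set $G$ with $\mu(G)>1-\delta$ together with a threshold (absorbed into $\var_\delta$) for which $|p_\var(\om')-e_{J(\om')}|<\delta/4$ for all $\om'\in G$ and $\var<\var_\delta$. I then set $E_\delta(\om)=\{n\ge N_1(\om):\theta^n\om\in G\}$. By the Birkhoff ergodic theorem $\{n:\theta^n\om\in G\}$ has density $\mu(G)>1-\delta$ for $\mu$-a.e.\ $\om$, and deleting the finitely many $n<N_1(\om)$ leaves the density unchanged, giving (1). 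For $n\in E_\delta(\om)$ both terms are $<\delta/4$, so the distributional estimate from the first paragraph holds and (2) follows. In the uniformly synchronized case $N$ is a.e.\ bounded (so $n_0\equiv0$ and $N_1$ is constant) and the convergence in (3) is uniform (so $G=\Om$), whence $E_\delta(\om)=\{n\ge N_1\}$ is co-finite.

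The main obstacle is producing a single $\var$-independent set $E_\delta(\om)$ that works simultaneously for all $\var<\var_\delta$: the naive forward-attraction time supplied by Theorem \ref{thm:uniq_inv_distri}(2) depends on $\var$ and could a priori blow up as $\var\to0$. The resolution is the recurrence-to-$\{N\le N_\ast\}$ argument, which converts the $\var$-uniform, linear-in-$\var$ perturbation estimate over a \emph{bounded} time window into a $\var$-uniform contraction time; this decoupling of the two limits $n\to\infty$ and $\var\to0$ is the technical heart. A minor point is the measurability of the envelope $\phi_{\var_0}$ (and hence the use of Egorov with a continuous parameter), which I would handle using continuity of $\var\mapsto p_\var(\om)$ or by restricting the supremum to rational $\var$.
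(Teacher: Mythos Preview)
Your proof is correct and follows essentially the same strategy as the paper's: reduce to controlling $|P_{\cX^\var}(n,\om)u-e_{J(\theta^n\om)}|$, split into a contraction term and a continuity term, handle the latter by a measure-restriction argument (you invoke Egorov, the paper uses an equivalent threshold function $f(\om,\de)$ and the set $\{f(\,\cdot\,,\de)\ge\var_\de\}$), and obtain the density of $E_\de(\om)$ via Birkhoff. The one noteworthy difference is that you make the $\var$-independence of the contraction time explicit through the Dobrushin coefficient and recurrence to $\{N\le N_\ast\}$, whereas the paper simply asserts an $\var$-independent $N(\om)$ with $|p_{\cX^\var}(n,\om)-p_\var(\theta^n\om)|<\de/2$ for $n\ge N(\om)$ as if it followed from Theorem~\ref{thm:uniq_inv_distri}(ii); in fact that uniformity comes from the estimate~(\ref{Pv}) in the proof of Lemma~\ref{thm:markov-pert-simple-LE} (which is exactly what your argument repackages), so you have filled in a step the paper left implicit.
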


It is not hard to see that uniform synchronization of $\cX^0$ is
equivalent to $P^0$ being uniformly contracting on the hyper-plane
$\Sigma_0 = \{v\in \R^k : \sum_{j=1}^k v_j = 0\}$, that is, with the
appropriate constants $a > 0$ and $0< \lambda < 1$,
$$
|P^0(n,\om) v| \le a \lambda^n |v| \quad \forall n \in \N_0 , v \in \Sigma_0
$$
for $\mu$-a.e.\ $\om \in \Om$. In fact, $P^0 (n,\om) \Sigma_0 = \{
0\}$ whenever $n\ge N(\om)$. This uniform contraction property is a special
form of uniform hyperbolicity, and as such is preserved under certain
perturbations. It is clear, however, that a generic cocycle will not
exhibit this property, and consequently a synchronized DRN will not in
general be uniformly synchronized. (Section 6 provides an
explicit example in this regard.) In the absence of uniform
synchronization, the proof of Theorem \ref{thm:non-unif_syn} crucially
depends on establishing connections between the probabilities of
synchronization on the one hand and the asymptotic behaviour of the
invariant distribution on the other hand. Once established, naturally
these connections can be strengthened under additional smoothness
assumptions. Specifically, say that a Markov perturbation
$\{\cX^{\var}\}$ is $C^m$ ($m\ge 1$) if $\var \mapsto P_{\cX^{\var}}
(1,\om)$ is $C^m$ on $[0,\var_0]$ for some $\var_0>0$ and $\mu$-a.e.\ $\om \in \Om$. For $C^m$-Markov
perturbations of a synchronized DRN this paper develops a Taylor
formula for the invariant distribution $p_{\var}$ of Theorem
\ref{thm:uniq_inv_distri}, thus refining (\ref{limit-distribution}):
With explicitly computable $\cF$-measurable functions $q^{(\ell)}: \Om \to \R^k$, $1\le
\ell\le m$, for $\mu$-a.e.\ $\om \in \Om$,
\begin{equation}\label{q_exp}
\lim\nolimits_{\var \to 0}  \frac{ p_{\var} (\om) - 
e_{J(\om)} - \sum\nolimits_{\ell = 1}^m \var^{\ell}q^{(\ell)}
(\om)/\ell !}{\var^m}  = 0 \, ;
\end{equation}
see Section \ref{sec4} for details. Utilizing (\ref{q_exp}), it is
possible to study quantitatively the scenarios of high-probability
synchronization as well as low-probability desynchronization. In
fact, these two scenarios coexist in an alternating
fashion along most realizations of the extrinsic noise. To formulate
this, the final main result of this work, consider the (possibly
empty) set where $p_{\var} - e_J$ is completely degenerate, that is, let
\begin{equation}\label{eq1n8a}
\Om_{\rm deg} = \{q^{(1)} = 0\} \cap \{q^{(2)} = 0\} \cap \ldots \cap
\{q^{(m)} = 0\}  \, .
\end{equation}
Except when $\mu (\Om_{\rm deg})=1$, where it is too degenerate to capture
desynchronization, (\ref{q_exp}) enables a quantitative version of the
coexistence claim as follows.

\begin{thmx}\label{thm:desyn}
Let $\{\cX^{\var}\}$ be a $C^m$-Markov perturbation $(m\ge 1)$ of a
synchronized DRN
$\cX^0$. Assume that $\mu
(\Om_{\rm deg})< 1$. Then, with the appropriate $0<a<1$ and $\ell \in \{1, \ldots
, m\}$, for every sufficiently small $\delta >0$ there exist
$\var_{\delta}>0, b_\de>0, c_{\de}>0$, and $E_{\delta}, F_{\delta}:\Om \to
2^{\N}$ such that for $\mu$-a.e.\ $\om \in \Om$,
\begin{enumerate}
\item $E_{\delta}(\om)$, $F_{\delta}(\om)$ are disjoint, have positive
  density, and together have 
  large density, that is,
$$
\liminf \nolimits_{n\to \infty} \frac{\# (E_{\delta} (\om) \cap \{1,\ldots ,
  n\})}{n}  > a - \delta \, , \quad
\liminf \nolimits_{n\to \infty} \frac{\# (F_{\delta} (\om) \cap \{1,\ldots ,
  n\})}{n} > 1 - a  - \delta
\, ;
$$
\item any two independent copies $\cX$, $\cY$ of $\cX^{\var}$ with
  $0\le \var < \var_{\delta}$ are very likely synchronized on
  $E_{\delta}(\om)$ but somewhat likely desynchronized on $F_{\delta}(\om)$, that is,
   \begin{align}
   {\P}\big\{  X_{n}=Y_{n}|X_0, Y_0\in S\times \{ \om\} \big\}&\ge 1- \var^{\ell}
   b_{\delta}   \quad \forall n \in E_{\delta} (\om) \, , \label{large_syn}\\
   {\P}\big\{ X_{n}\neq  Y_{n}|X_0, Y_0\in S \times \{\om\} \big\} &\ge \var^{\ell}
   c_{\delta } \quad \forall n \in F_{\delta} (\om) \, .
\label{small-desyn}
    \end{align}
\end{enumerate}
\end{thmx}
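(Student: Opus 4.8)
The plan is to reduce the two-copy synchronization probability to a quadratic functional of the invariant distribution, and then to read off the claimed densities and bounds from the Taylor expansion (\ref{q_exp}) together with the ergodicity of $\theta$. First I would exploit stationarity: taking both copies distributed according to the invariant distribution $p_{\var}$ of Theorem \ref{thm:uniq_inv_distri}, conditioning on $X_0,Y_0\in S\times\{\om\}$ puts each copy at $p_{\var}(\om)$ on the fibre over $\om$, so the cocycle relation $P_{\cX^{\var}}(n,\om)p_{\var}(\om)=p_{\var}(\theta^n\om)$ and independence give the exact identity
\begin{equation}\label{exact-collision}
\P\big\{X_n=Y_n\,|\,X_0,Y_0\in S\times\{\om\}\big\}=\sum\nolimits_{i=1}^k\big(p_{\var}(\theta^n\om)\big)_i^2 .
\end{equation}
(For general initial distributions on the fibre, forward attraction in Theorem \ref{thm:uniq_inv_distri}(2) reduces matters to (\ref{exact-collision}) up to an error that is negligible on the large-density set of late times already isolated in Theorem \ref{thm:non-unif_syn}.) Hence the whole problem is to analyze $1-\sum_i(p_{\var}(\eta))_i^2$ along the orbit $\eta=\theta^n\om$.

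Next I would insert the expansion (\ref{q_exp}). Let $\ell$ be the smallest index with $\mu\{q^{(\ell)}\neq 0\}>0$; by the hypothesis $\mu(\Om_{\rm deg})<1$ such an $\ell\le m$ exists, and $q^{(1)}=\cdots=q^{(\ell-1)}=0$ $\mu$-a.e. Writing $r(\eta)=p_{\var}(\eta)-e_{J(\eta)}\in\Sigma_0$, so that $\sum_i r_i(\eta)=0$, one computes $1-\sum_i(p_{\var}(\eta))_i^2=-2\,r_{J(\eta)}(\eta)-\sum_i r_i(\eta)^2$. A short argument from positivity of $p_{\var}$ fixes the sign of the leading coefficient: since $(e_{J(\eta)})_i=0$ for $i\neq J(\eta)$, nonnegativity of the $i$-th component forces $q^{(\ell)}_i(\eta)\ge 0$ for $i\neq J(\eta)$, whence $q^{(\ell)}_{J(\eta)}(\eta)=-\sum_{i\neq J(\eta)}q^{(\ell)}_i(\eta)\le 0$, with equality if and only if $q^{(\ell)}(\eta)=0$. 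Combining this with $r(\eta)=\var^{\ell}q^{(\ell)}(\eta)/\ell!+o(\var^{\ell})$ and $\sum_i r_i^2=O(\var^{2\ell})$ yields, for $\mu$-a.e.\ $\eta$,
\begin{equation}\label{desync-leading}
1-\sum\nolimits_{i=1}^k\big(p_{\var}(\eta)\big)_i^2=\frac{2\var^{\ell}}{\ell!}\,\big|q^{(\ell)}_{J(\eta)}(\eta)\big|+\var^{\ell}R_{\var}(\eta),\qquad \lim\nolimits_{\var\to0}R_{\var}(\eta)=0 .
\end{equation}

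The remaining work is to turn the pointwise-in-$\eta$ statement (\ref{desync-leading}) into uniform, two-sided bounds along a single orbit of prescribed density. Fix a threshold $\tau>0$ so that both $\mu\{|q^{(\ell)}_J|\ge\tau\}>0$ (possible for small $\tau$ since $\{q^{(\ell)}\neq0\}$ has positive measure) and $a:=\mu\{|q^{(\ell)}_J|<\tau\}>0$, identifying $a\in(0,1)$. Since the convergence $R_{\var}\to0$ in (\ref{desync-leading}) is only pointwise, I would invoke Egorov's theorem (applied along a sequence $\var_j\downarrow0$, together with the local regularity of $\var\mapsto p_{\var}$ from Section \ref{sec4}) to obtain $\Om_\delta$ with $\mu(\Om\setminus\Om_\delta)<\delta$ on which $|R_{\var}|\le\tau/\ell!$ for all $\var<\var_\delta$. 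Setting
$$
F_\delta(\om)=\{n\in\N:\theta^n\om\in\Om_\delta,\ |q^{(\ell)}_{J(\theta^n\om)}(\theta^n\om)|\ge\tau\},\quad E_\delta(\om)=\{n\in\N:\theta^n\om\in\Om_\delta,\ |q^{(\ell)}_{J(\theta^n\om)}(\theta^n\om)|<\tau\},
$$
these are disjoint, and on $F_\delta(\om)$ the identity (\ref{desync-leading}) gives a desynchronization probability at least $\var^{\ell}c_\delta$ with $c_\delta=\tau/\ell!$, yielding (\ref{small-desyn}), while on $E_\delta(\om)$ it gives a synchronization probability at least $1-\var^{\ell}b_\delta$ with $b_\delta=3\tau/\ell!$, yielding (\ref{large_syn}). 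Finally, Birkhoff's ergodic theorem applied to the indicators of $\Om_\delta\cap\{|q^{(\ell)}_J|\ge\tau\}$ and $\Om_\delta\cap\{|q^{(\ell)}_J|<\tau\}$ shows that for $\mu$-a.e.\ $\om$ the densities of $F_\delta(\om)$ and $E_\delta(\om)$ equal these two measures, which exceed $1-a-\delta$ and $a-\delta$ respectively once the excised set $\Om\setminus\Om_\delta$ is accounted for.

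I expect the main obstacle to be precisely this last transfer: the remainder in (\ref{q_exp}) converges to $0$ only pointwise, with an $\om$-dependent rate, whereas the conclusion demands control of $p_{\var}(\theta^n\om)$ that is uniform in $n$ at the scale $\var^{\ell}$. The device that overcomes it is the pairing of Egorov's theorem (upgrading a.e.\ convergence to uniform convergence off a $\delta$-small set) with Birkhoff's theorem (guaranteeing that the orbit spends all but a $\delta$-fraction of its time inside that good set); this is exactly what forces the $-\delta$ slack in the density estimates and mirrors, at the quantitative level of $\var^{\ell}$, the non-uniform mechanism behind Theorem \ref{thm:non-unif_syn}. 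Verifying that $\tau$ can be chosen with $a\in(0,1)$ under the sole hypothesis $\mu(\Om_{\rm deg})<1$, and confirming the lower density bound for $E_\delta(\om)$, are the only genuinely case-sensitive points, and are where I would spend the most care.
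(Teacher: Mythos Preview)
Your approach is correct and shares the core mechanism with the paper---Taylor expansion \eqref{q_exp}, Egorov to upgrade pointwise to uniform convergence, Birkhoff for the densities---but the decomposition differs in an instructive way. You take $\ell=\min\{i:\mu\{q^{(i)}\ne0\}>0\}$ and split $\Om$ by a single threshold $|q^{(\ell)}_J|\gtrless\tau$; the paper instead sets $\ell=\max\{i:\mu(\Om^{(i-1)}\setminus\Om^{(i)})>0\}$ and proceeds via two lemmas, obtaining the synchronization bound on $\Om^{(\ell)}=\Om_{\rm deg}$ (where $|p_\var-e_J|=o(\var^\ell)$, Lemma~\ref{thm:large-Syn}) and the desynchronization bound separately on each stratum $\Om^{(i-1)}\setminus\Om^{(i)}$ (where $|p_\var-e_J|\sim\var^i\ge\var^\ell$, Lemma~\ref{lem:small-desyn}). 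The paper works throughout with the $\ell^1$ identity $p_J=1-\tfrac12|p-e_J|$ (Proposition~\ref{lem:vj}) and the crude bounds $\P\{X_n=Y_n\}\ge p_J^2$, $\P\{X_n\ne Y_n\}\ge 2p_J(1-p_J)$, which sidesteps your sign argument for $q^{(\ell)}_J$; that argument is correct, but note it genuinely requires $\ell$ to be minimal, since otherwise lower-order $q^{(j)}$ could spoil the componentwise nonnegativity. The paper's stratified route delivers the canonical value $a=\mu(\Om_{\rm deg})$ announced after the theorem, while yours yields a $\tau$-dependent $a$; both are admissible since the statement only asserts existence of some $a\in(0,1)$. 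Finally, the edge case you flag---no $\tau$ giving $a\in(0,1)$, which occurs exactly when $|q^{(\ell)}_J|$ is $\mu$-a.e.\ constant---is real and corresponds to the paper's $\#L=1$ case: there both \eqref{large_syn} and \eqref{small-desyn} hold simultaneously on one large-density set $D_\delta$, which is then partitioned arbitrarily into $E_\delta$ and $F_\delta$ of any prescribed densities. The same device closes your argument.
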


As it turns out, the important quantities $a$ and $\ell$ in Theorem
\ref{thm:desyn} can be expressed rather explicitly in terms of
$q^{(1)}, \ldots , q^{(m)}$. For instance, if $\mu (\Om_{\rm deg})>0$
then simply $a = \mu (\Om_{\rm deg})$, and $\ell$ equals the smallest
$1\le i \le m$ for which $\mu (\{q^{(1)} = 0\} \cap \ldots \cap
\{q^{(i)} =0\}) = \mu (\Om_{\rm deg})$; see Section 5 below for details.

Note that the (very mild) non-degeneracy assumption $\mu
(\Om_{\rm deg})<1$ is essential in Theorem \ref{thm:desyn}, as shown by the
trivial example $\cX^{\var} \equiv \cX^0$ for which (ii) fails. Informally put, Theorem \ref{thm:desyn} asserts that along almost every extrinsic noise
realization, for any two stochastic trajectories, high-probability synchronization \eqref{large_syn} and
low-probability desynchronization \eqref{small-desyn} occur along
different time subsequences in an alternating way (see also Figure
\ref{fig:multi-layer}), and moreover, the combined relative frequencies
of high-probability synchronization and low-probability desynchronization are arbitrarily close to 1. Thus the result rigorously confirms the speculation in \cite{J} mentioned at the outset.

\medskip

\begin{figure}[ht] 
\includegraphics[height=8cm]{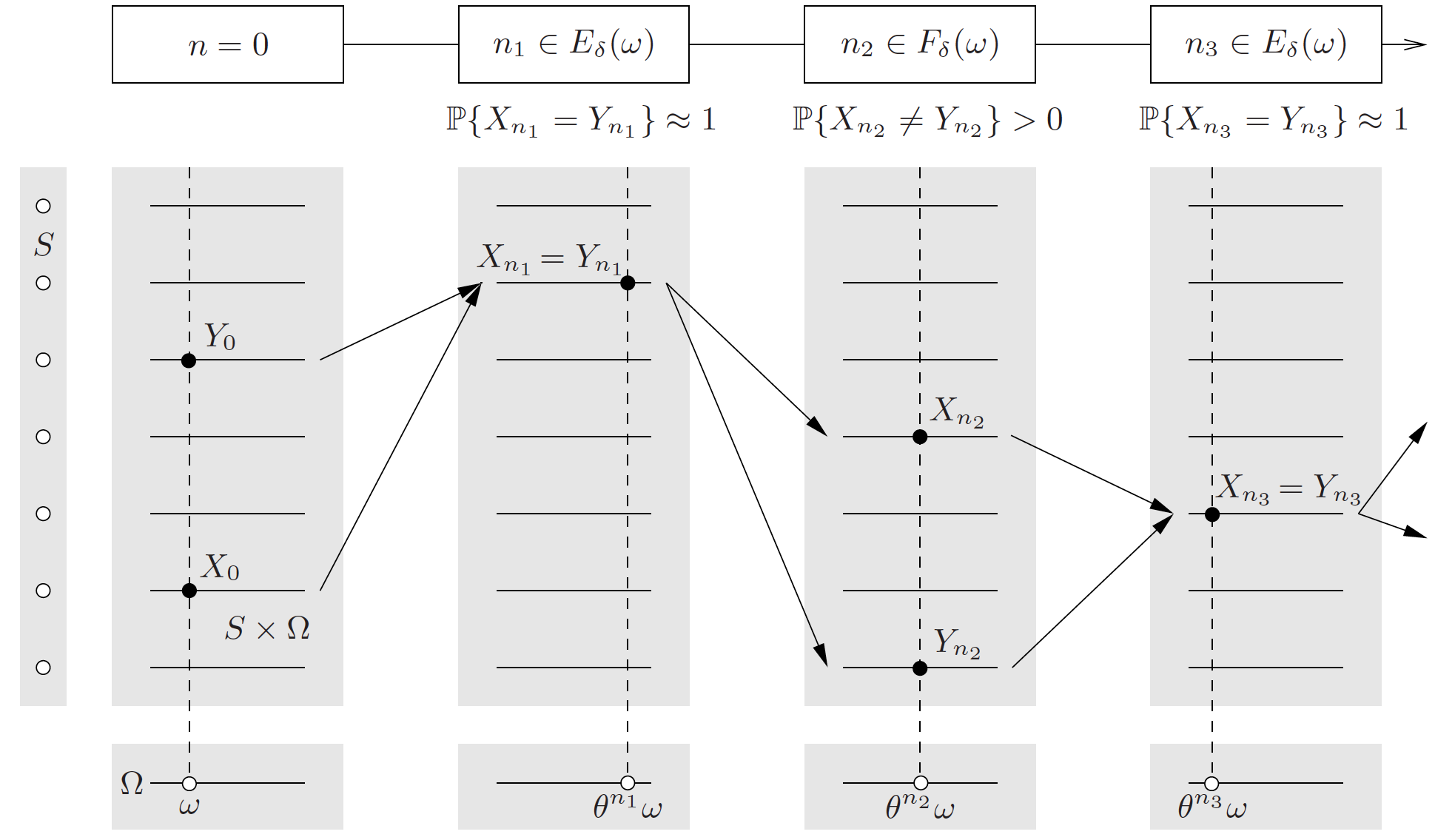}
\caption{By Theorem \ref{thm:desyn}, typical stochastic trajectories
  of a smooth Markov perturbation of a synchronized DRN exhibit
  alternating high-probability synchronization ($n\in E_{\de}$) and low-probability desynchronization ($n\in F_{\de}$).}\label{fig:multi-layer}
\end{figure}

This paper is organized as follows. Section 2 reviews
basic properties of MRN pertaining to the evolution of distributions
as well as to Lyapunov exponents. Section 3 proves Theorem
\ref{thm:uniq_inv_distri} by establishing the continuity  of Lyapunov
exponents for Markov perturbations of synchronized DRN, and also
proves Theorem \ref{thm:non-unif_syn} by linking synchronization to
invariant distributions. Section 4 studies the invariant distributions of smooth Markov perturbations, and derives the
asymptotic formula \eqref{q_exp}, through which finer properties of
$p_\var$ can be investigated. Theorem \ref{thm:desyn} is proved
in Section 5 utilizing (\ref{q_exp}). In Section \ref{sec6n} a concrete
Markov perturbation of a probabilistic Boolean network illustrates the
main concepts and results.

\section{Basic properties  of MRN}\label{sec2}

This section establishes a few properties of Markov random networks
that are instrumental in all that follows. To this end,
first a modicum of linear algebra notation and terminology is
reviewed.

Throughout, $k\ge 2$ is a positive integer, and $\K = \{1,\ldots, k\}$ for
convenience; as needed, endow $\K$ with the topology,
$\sigma$-algebra, and order inherited from $\R$. The canonical basis and identity map of
$\R^k$ are denoted $e_1, \ldots, e_k$ and $I_k$, respectively, and $|\,
\cdot \, |$ is the $\ell^1$-norm on $\R^k$, that is, $|v| = \sum_{j\in
  \K} |v_j|$ for every $v= \sum_{j\in \K} v_j e_j\in \R^k$; as usual,
$|\, \cdot \,|$ also denotes the induced norm on $\R^{k\times k}$,
that is, $|A| = \max_{|v|=1}|Av| = \max_{j\in \K} \sum_{i\in
  \K}|a_{i,j}|$ for every $A = (a_{i,j})\in \R^{k\times k}$. Given any
$V\subset \R^k$ (or $V\subset \R^{k\times k}$), $a\in \R$, $u\in
\R^k$ (or $u\in \R^{k\times k}$), and $A\in \R^{k\times k}$,
write
$$
aV = \{av:v\in V\}\, , \quad u+V = \{u+v: v \in V\}\, , \quad AV =
\{Av: v \in V\} \, .
$$
Also, for every $a\in \R$ let
$$
\Sigma_a = \left\{ v\in \R^k: \sum\nolimits_{j\in \K} v_j = a \right\}
\, .
$$
Plainly, $\Sigma_0$ is a $(k-1)$-dimensional linear subspace of
$\R^k$, and $\Sigma_a = ae_1 + \Sigma_0$; also, $\min_{v \in
  \Sigma_a}|v| = |a|$, and if $u\in \Sigma_a$,
$v\in \Sigma_b$ then $u\pm v \in \Sigma_{a\pm b}$. Furthermore,
consider
$$
\Sigma_a^+ = \left\{ v\in \Sigma_a : v_j \ge 0 \enspace \forall j \in
  \K \right\} \, ,
$$
and note that $\Sigma_a^+ = \varnothing$ if $a<0$, $\Sigma_0^+=\{0\}$,
and $\Sigma_a^+ = a\Sigma_1^+$ if $a>0$. In particular, $\Sigma_1^+$
may be identified with the set of all probability distributions on
$\K$ as well as the standard simplex in $\R^k$, that is, the convex hull of $\{e_1,
\ldots , e_k\}$.

For every $a\in \R$ let
$$
\cM_a = \left\{ A\in \R^{k\times k} : \sum\nolimits_{i\in \K} a_{i,j} =
a \enspace \forall j \in \K \right\} \, .
$$
Plainly, $\cM_0$ is a $(k^2-k)$-dimensional linear subspace of
$\R^{k\times k}$, and if $A\in \cM_a$, $B\in \cM_b$ then $A\pm B \in
\cM_{a\pm b}$ and $AB \in \cM_{ab}$. As is well known (and easy to check), the elements of
$\cM_a$ are characterized by simple invariance properties.

\begin{prop}\label{prop201}
Let $A\in \R^{k\times k}$. Then
\begin{enumerate}
\item $A\in \cM_0$ if and only if $A\R^k \subset \Sigma_0$;
\item $A\in \cM_a$ for some $a\in \R$ if and only if $A\Sigma_0
  \subset \Sigma_0$;
\item $A\in \cM_1$ if and only if $A\Sigma_a \subset \Sigma_a$ for
  some (and hence every) $a\ne 0$.
\end{enumerate}
\end{prop}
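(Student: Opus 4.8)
The plan is to reduce all three equivalences to a single observation about column sums. I would write $\bfone = e_1 + \cdots + e_k$ for the all-ones vector, so that $\Sigma_a = \{v \in \R^k : \langle \bfone, v\rangle = a\}$, where $\langle\,\cdot\,,\,\cdot\,\rangle$ is the standard inner product. For $A = (a_{i,j}) \in \R^{k\times k}$ the column sums are $c_j := \sum_{i \in \K} a_{i,j} = \langle \bfone, Ae_j\rangle$, and by definition $A \in \cM_a$ precisely when $c_j = a$ for every $j \in \K$, i.e.\ when $A^{\mathsf T}\bfone = a\bfone$. The single computation underlying everything is
\begin{equation*}
\langle \bfone, Av\rangle = \langle A^{\mathsf T}\bfone, v\rangle = \sum\nolimits_{j \in \K} c_j v_j \qquad (v \in \R^k),
\end{equation*}
which expresses the total-sum functional of $Av$ as a weighted combination of the coordinates of $v$ with the column sums as weights.

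With this in hand, each part would follow by testing against suitable vectors. For (i), if $A \in \cM_0$ then $\langle \bfone, Av\rangle = 0$ for every $v$, giving $A\R^k \subset \Sigma_0$; conversely I would test $A\R^k \subset \Sigma_0$ on each basis vector $v = e_j$ to read off $c_j = 0$, whence $A \in \cM_0$. For (ii), if $A \in \cM_a$ and $v \in \Sigma_0$ then $\langle \bfone, Av\rangle = a\langle \bfone, v\rangle = 0$, so $A\Sigma_0 \subset \Sigma_0$; conversely, the vectors $e_j - e_1$ ($2 \le j \le k$) lie in and span $\Sigma_0$, and the invariance $A\Sigma_0 \subset \Sigma_0$ forces $0 = \langle \bfone, A(e_j - e_1)\rangle = c_j - c_1$, so all column sums agree and $A \in \cM_a$ with $a = c_1$. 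For (iii), if $A \in \cM_1$ then $\langle \bfone, Av\rangle = \langle \bfone, v\rangle$ for all $v$, so $A\Sigma_a \subset \Sigma_a$ for every $a$; conversely, I would fix any $a \neq 0$ with $A\Sigma_a \subset \Sigma_a$, note that $ae_j \in \Sigma_a$ for each $j$, and deduce $a = \langle \bfone, A(ae_j)\rangle = a\,c_j$, so that $c_j = 1$ and $A \in \cM_1$.

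The computations are routine, so the points demanding care are structural rather than arithmetic. First, in the converse of (ii) one must check that the chosen test vectors genuinely span $\Sigma_0$, so that the invariance hypothesis is fully exploited rather than verified on a proper subspace; the set $\{e_j - e_1 : 2 \le j \le k\}$ is the natural choice. Second, and I expect this to be the real subtlety, the contrast between (ii) and (iii) hinges entirely on the role of $a$: testing against $\Sigma_0$ detects only the \emph{differences} $c_j - c_1$ and hence pins the column sums down merely up to a common value, whereas testing against $\Sigma_a$ with $a \neq 0$ additionally recovers that common value through the scaling $a\,c_j = a$. This is precisely why (iii) needs $a \neq 0$ (division by $a$ being the only non-formal step), and it also accounts for the parenthetical ``some (and hence every)'': the forward implication shows that $A \in \cM_1$ yields invariance of $\Sigma_a$ for \emph{all} $a$ at once, so invariance for a single nonzero $a$ already forces $A \in \cM_1$ and thereby invariance for every nonzero $a$.
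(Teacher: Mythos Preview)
Your argument is correct and is exactly the standard verification one would expect; the paper itself does not give a proof of this proposition, stating only that the characterizations are ``well known (and easy to check).'' There is nothing to compare against, and your treatment of the $a\ne 0$ subtlety in (iii) is spot on.
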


\noindent
Furthermore, consider
$$
\cM_a^+ = \{A \in \cM_a : a_{i,j} \ge 0 \enspace \forall i,j \in \K \}
\, ,
$$
and note that $\cM_a^+ = \varnothing$ if $a<0$, $\cM_0^+ = \{0\}$, and
$\cM_a^+ = a \cM_1^+$ if $a>0$. In particular, $\cM_1^+$ is the set of
all (column-)stochastic $k\times k$-matrices. A subclass of the latter
particularly relevant for this work are the stochastic $0$-$1$-matrices,
$$
\cM_{1, {\rm det}}^+ = \left\{ A\in \cM_1^+ : a_{i,j} \in \{0,1\}
  \enspace \forall i,j \in \K \right\} \, ,
$$
informally referred to as {\em deterministic\/} stochastic
matrices. Note that $\cM_1^+$ and $\cM_{1,{\rm det}}^+$ are closed under
matrix multiplication, just as $\cM_0$ and $\cM_1$ are. Not surprisingly, $\cM_1^+$ and $\cM_{1,{\rm det}}^+$ also
are characterized by simple invariance properties.

\begin{prop}\label{prop202}
Let $A\in \R^{k\times k}$. Then
\begin{enumerate}
\item $A\in \cM_1^+$ if and only if $A\Sigma_a^+ \subset \Sigma_a^+$
  for some (and hence every) $a>0$, and in this case $|A|=1$;
\item $A\in\cM_{1,{\rm det}}^+$ if and only if $A\{e_1, \ldots , e_k\}
  \subset \{e_1, \ldots , e_k\}$.
\end{enumerate}
\end{prop}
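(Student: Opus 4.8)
The plan is to verify each equivalence by direct computation, exploiting the fact recorded in Section \ref{sec2} that $\Sigma_1^+$ is the simplex with vertices $e_1, \ldots, e_k$, together with the explicit formula $|A| = \max_{j\in \K} \sum_{i\in \K} |a_{i,j}|$ for the induced norm.

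For part (i) I would first establish the forward implication for every $a > 0$. Assuming $A \in \cM_1^+$ and picking $v \in \Sigma_a^+$, nonnegativity of the entries $a_{i,j}$ forces $(Av)_i = \sum_{j \in \K} a_{i,j} v_j \ge 0$, while interchanging the order of summation and using the column-sum condition gives $\sum_{i \in \K} (Av)_i = \sum_{j \in \K} v_j \sum_{i \in \K} a_{i,j} = \sum_{j \in \K} v_j = a$; hence $Av \in \Sigma_a^+$. For the converse I would use only the weaker hypothesis that $A\Sigma_a^+ \subset \Sigma_a^+$ for some fixed $a > 0$: since $\Sigma_a^+ = a\Sigma_1^+$, I may rescale to $a = 1$, and then each vertex $e_j \in \Sigma_1^+$ maps to $Ae_j \in \Sigma_1^+$, so that the $j$-th column of $A$ is nonnegative with entries summing to $1$. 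As this holds for every $j$, the matrix $A$ lies in $\cM_1^+$. The two implications together yield the ``some iff every'' equivalence. The norm claim $|A|=1$ then follows at once from the displayed formula: nonnegativity lets me drop the absolute values, and the column-sum condition makes each inner sum equal $1$, so the maximum over $j$ is $1$.

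For part (ii) I would note that $\cM_{1,{\rm det}}^+ \subset \cM_1^+$, so $A\{e_1, \ldots, e_k\} \subset \Sigma_1^+$ holds already by part (i); the issue is to pin down when these images are vertices rather than interior points. If $A \in \cM_{1,{\rm det}}^+$, then each column is nonnegative with $0$-$1$ entries summing to $1$, which forces exactly one entry per column to equal $1$, whence $Ae_j \in \{e_1, \ldots, e_k\}$. Conversely, if $Ae_j \in \{e_1, \ldots, e_k\}$ for every $j$, then each column of $A$ is a standard basis vector, so all entries lie in $\{0,1\}$ and every column sums to $1$, giving $A \in \cM_{1,{\rm det}}^+$.

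This proposition is elementary, so I anticipate no genuine obstacle; the only point demanding minor care is the asymmetry in part (i), where the forward direction must be proved for every $a > 0$ while the converse is invoked for merely some $a > 0$, which is exactly what makes the ``some (and hence every)'' formulation valid. The scaling identity $\Sigma_a^+ = a\Sigma_1^+$ reduces even this to the single case $a = 1$.
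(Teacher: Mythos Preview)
Your proof is correct. The paper does not actually supply a proof of this proposition; it merely states it as an elementary invariance characterization (``Not surprisingly, $\cM_1^+$ and $\cM_{1,{\rm det}}^+$ also are characterized by simple invariance properties''), so your direct verification is precisely the sort of argument the authors implicitly leave to the reader.
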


Recall from the Introduction that with every MRN one can associate the
transition cocycle $P_{\cX}$ and distribution $p_{\cX}$. The following properties of
$P_{\cX}$ and $p_{\cX}$ immediately follow from the definition of an
MRN; property (ii) justifies usage of the term ``cocycle''.

\begin{prop}\label{prop210}
Let $\cX$ be an MRN with transition cocycle $P_{\cX}$ and distribution
$p_{\cX}$. Then, for $\mu$-a.e.\ $\om \in \Om$,
\begin{enumerate}
\item $P_{\cX} (0,\om) = I_k$ and $P_{\cX} (n,\om) \in \cM_1^+$ for
  all $n\in \N_0$;
\item $P_{\cX}$ has the cocycle property
$$
P_{\cX} (m+n, \om) = P_{\cX} (m,\theta^n \om) P_{\cX} (n,\om) \quad
\forall m,n\in \N_0 \, ,
$$
and hence in particular
$$
P_{\cX} (n, \om) = P_{\cX} (1, \theta^{n-1}\om) P_{\cX} (1,
\theta^{n-2}\om) \cdots P_{\cX} (1,\om) \quad \forall n \in \N \, ;
$$
\item $p_{\cX} (n,\om) = P_{\cX} (n,\om) p_{\cX} (0,\om)\in
  \Sigma_1^+$ for all $n\in \N_0$.
\end{enumerate}
\end{prop}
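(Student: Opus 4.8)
The plan is to verify the three claims in order, drawing on the definitions of $p_{i,j}(n,\om)$ and $p_i(n,\om)$ together with properties (MRN2) and (MRN3), and on Proposition \ref{prop202} for the membership statements. First I would dispose of (i). That $P_{\cX}(0,\om)=I_k$ is immediate, since conditioning on $X_0=(s_j,\om)$ forces $p_{i,j}(0,\om)=\P\{X_0=(s_i,\om)\mid X_0=(s_j,\om)\}$ to equal $1$ when $i=j$ and $0$ otherwise. For $P_{\cX}(n,\om)\in\cM_1^+$ I note that every entry $p_{i,j}(n,\om)$ is a conditional probability, hence nonnegative, while (MRN2) with $m=0$ gives $\sum_{i\in\K}p_{i,j}(n,\om)=1$ for each $j$ and $\mu$-a.e.\ $\om$; this is exactly the defining property of $\cM_1^+$, and by Proposition \ref{prop202}(i) it then follows automatically that $|P_{\cX}(n,\om)|=1$.

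The substance lies in (ii). Here I would first record the homogeneity built into the ``over $\Theta$'' structure: transitions along the orbit are governed by the fibre point and the number of steps alone, so that
$$
\P\bigl\{X_{m+n}=(s_i,\theta^{m+n}\om)\mid X_n=(s_l,\theta^n\om)\bigr\}=p_{i,l}(m,\theta^n\om)
$$
for all $l\in\K$ and $\mu$-a.e.\ $\om$. Granting this identification, the Markov property (MRN3) lets me condition on the state at time $n$ while discarding the earlier history, and the law of total probability applied to the intermediate fibre $S\times\{\theta^n\om\}$ yields the Chapman--Kolmogorov relation
$$
p_{i,j}(m+n,\om)=\sum\nolimits_{l\in\K}\P\bigl\{X_{m+n}=(s_i,\theta^{m+n}\om)\mid X_n=(s_l,\theta^n\om)\bigr\}\,p_{l,j}(n,\om)=\sum\nolimits_{l\in\K}p_{i,l}(m,\theta^n\om)\,p_{l,j}(n,\om).
$$
The right-hand side is precisely the $(i,j)$-entry of $P_{\cX}(m,\theta^n\om)P_{\cX}(n,\om)$, which is the claimed cocycle identity. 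Specializing to $m=1$ and iterating over $n$ then produces the product representation $P_{\cX}(n,\om)=P_{\cX}(1,\theta^{n-1}\om)\cdots P_{\cX}(1,\om)$; alternatively, this follows by re-associating the product after a short induction.

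Finally, for (iii) I would observe that, under the skew-product evolution of the $\Om$-component, initialization on the fibre $S\times\{\om\}$ is the same event as $X_n\in S\times\{\theta^n\om\}$, so that conditioning on $X_0$ via the law of total probability gives $p_i(n,\om)=\sum_{j\in\K}p_{i,j}(n,\om)\,p_j(0,\om)$, i.e.\ $p_{\cX}(n,\om)=P_{\cX}(n,\om)\,p_{\cX}(0,\om)$. Since $p_{\cX}(0,\om)\in\Sigma_1^+$ is a probability vector and $P_{\cX}(n,\om)\in\cM_1^+$ by (i), Proposition \ref{prop202}(i) shows $P_{\cX}(n,\om)\Sigma_1^+\subset\Sigma_1^+$, whence $p_{\cX}(n,\om)\in\Sigma_1^+$.

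I expect the main obstacle to be the homogeneity identification used in (ii): everything else is bookkeeping with the definitions, (MRN2), and Proposition \ref{prop202}, but pinning down that the transition from time $n$ over $m$ steps at fibre $\theta^n\om$ coincides with $p_{i,l}(m,\theta^n\om)$ --- rather than merely being some column-stochastic vector guaranteed by (MRN2) --- is what actually turns the Markov property into the matrix cocycle relation, and it is the step that must be argued carefully from the skew-product structure underlying the definition of an MRN.
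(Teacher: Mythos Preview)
Your proposal is correct and is precisely the natural detailed verification the paper has in mind; in fact the paper offers no proof at all, stating only that ``the following properties of $P_{\cX}$ and $p_{\cX}$ immediately follow from the definition of an MRN.'' Your write-up simply unpacks that sentence, and you correctly single out the one genuinely nontrivial ingredient --- the time-homogeneity identification $\P\{X_{m+n}=(s_i,\theta^{m+n}\om)\mid X_n=(s_l,\theta^n\om)\}=p_{i,l}(m,\theta^n\om)$ --- which the paper treats as built into the skew-product framework rather than deriving it from (MRN1)--(MRN3) alone.
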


\begin{remark}
(i) For the purpose of this paper, only the distributional structure of an
MRN $\cX$ matters. Whenever convenient, therefore, $P_{\cX}$ and
$p_{\cX}$ may be replaced by $2^{\N_0}\otimes \cF$-measurable
functions $P': \N_0 \times \Om \to \cM_1^+$ and
$p':\N_0 \times \Om \to \Sigma_1^+$, respectively, such
that
$$
\mu \left(  \bigcup\nolimits_{n\in \N_0} \bigl\{ P' (n, \, \cdot
  \, ) \ne P_{\cX} (n , \, \cdot \, ) \bigr\}\right) =
\mu \left(  \bigcup\nolimits_{n\in \N_0} \bigl\{ p' (n, \, \cdot
  \, ) \ne p_{\cX} (n , \, \cdot \, ) \bigr\}\right) = 0 \, ,
$$
and the assertions of Proposition \ref{prop210}, with $P', p'$
instead of $P_{\cX}, p_{\cX}$, hold for {\em all\/} $\om \in \Om$;
cf.\ \cite[sec.1.3.7]{arnold98book}.

(ii) One might call any $2^{\N_0}\otimes \cF$-measurable function
$P:\N_0 \times \Om \to \cM_1^+$ a {\em Markov cocycle\/} provided that
it has the cocycle property. Except for convenience, however, nothing
new is captured by this terminology: By Proposition \ref{prop210}, $P_{\cX}$
is a Markov cocycle for every MRN $\cX$, and conversely every Markov
cocycle, e.g., the cocycle $P'$ mentioned in (i), is the
transition cocycle of an MRN.
\end{remark}

To describe the long-time behaviour of an MRN $\cX$, say that $\cX$
{\em converges in distribution\/} if for $\mu$-a.e.\ $\om \in \Om$,
$$
        \lim\nolimits_{n\to \infty} \big|P_{\cX} (n,\omega) u -P_{\cX}
        (n,\omega)v \big|=0\quad \forall u, v \in \Sigma_1^+ \, ,
$$
or equivalently, $\lim_{n\to \infty} |P_{\cX} (n,\om) v|=0$ for all
$v\in \Sigma_0$. As the following lemma shows, in the special case of a DRN,
convergence in distribution is the same as synchronization. Note that
by Proposition \ref{prop202} an MRN $\cX$ is a
DRN precisely if $P_{\cX}(n,\om) \in \cM_{1,{\rm det}}^+$ for all
$n\in \N_0$ and $\mu$-a.e.\ $\om \in \Om$.

\begin{lemma}\label{lem:A-P_syn}
Let $\cX^0$ be a DRN. Then $\cX^0$ converges in distribution if and
only if $\cX^0$ is synchronized.
\end{lemma}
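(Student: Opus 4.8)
The plan is to translate both conditions into statements about the deterministic stochastic matrices $P^0(n,\om) = P_{\cX^0}(n,\om)$ and then exploit the rigidity of the $0$-$1$ structure. By Proposition \ref{prop202}(ii), each $P^0(n,\om)$ maps $\{e_1, \ldots, e_k\}$ into itself, with $P^0(n,\om)e_j = e_i$ precisely when $T_{\cX^0}(n,\om)(s_j) = s_i$. Hence synchronization of $\cX^0$ is equivalent to the statement that, for $\mu$-a.e.\ $\om$, there is an $\cF$-measurable $N(\om) \in \N$ such that for every $n \ge N(\om)$ all columns of $P^0(n,\om)$ coincide, i.e.\ $P^0(n,\om)e_1 = \cdots = P^0(n,\om)e_k$. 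On the other side, I use the equivalent formulation of convergence in distribution recorded just before the lemma: $\cX^0$ converges in distribution iff $\lim_{n\to\infty}|P^0(n,\om)v| = 0$ for all $v \in \Sigma_0$ and $\mu$-a.e.\ $\om$.

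For the direction synchronized $\Rightarrow$ converges in distribution, fix $\om$ in the relevant full-measure set and $n \ge N(\om)$. If all columns of $P^0(n,\om)$ equal a common vector $e_c$, then for any $v = \sum_j v_j e_j \in \Sigma_0$ one has $P^0(n,\om)v = (\sum_j v_j)\, e_c = 0$, since $\sum_j v_j = 0$. Thus $P^0(n,\om)v = 0$ for all $n \ge N(\om)$ and convergence in distribution is immediate; this also recovers the observation $P^0(n,\om)\Sigma_0 = \{0\}$ for $n \ge N(\om)$.

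For the converse, the plan is to test convergence on the differences $e_i - e_j \in \Sigma_0$. Since $P^0(n,\om)(e_i - e_j)$ is a difference of two canonical basis vectors, its $\ell^1$-norm equals $0$ if the two images coincide and $2$ otherwise; convergence to $0$ therefore forces, for each ordered pair $(i,j)$, coincidence of $P^0(n,\om)e_i$ and $P^0(n,\om)e_j$ for all sufficiently large $n$. As there are only finitely many pairs, there is a single time after which all columns of $P^0(n,\om)$ coincide. The structural point that turns this into a clean definition of $N$ is a persistence property: once all columns of $P^0(n,\om)$ are equal, the cocycle identity $P^0(n+1,\om) = P^0(1,\theta^n\om)P^0(n,\om)$ shows they remain equal thereafter, since left-multiplying a matrix with identical columns by any stochastic matrix again yields identical columns.

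This persistence lets me define $N(\om)$ as the first time $n$ at which all columns of $P^0(n,\om)$ coincide, and I expect the measurability of this $N$ to be the one genuinely delicate point. I would argue that for each fixed $n$ the set $A_n := \{\om : P^0(n,\om)e_1 = \cdots = P^0(n,\om)e_k\}$ is $\cF$-measurable, because $P^0(n,\cdot)$ is $\cF$-measurable and ``all columns equal'' is a Borel condition; by persistence $A_n \subset A_{n+1}$, so $\{N \le n\} = A_n$ and $N$ is $\cF$-measurable. Restricting to the full-measure set on which convergence in distribution holds guarantees $N(\om) < \infty$, and the construction delivers exactly the $\cF$-measurable $N$ required by the definition of synchronization. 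The only remaining care is to intersect the various full-measure sets (from the cocycle property, from Proposition \ref{prop210}, and from convergence in distribution) so that all pointwise arguments hold simultaneously for $\mu$-a.e.\ $\om$.
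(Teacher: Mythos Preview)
Your proof is correct and follows essentially the same route as the paper's: both directions hinge on the observation that $|P^0(n,\om)e_i - P^0(n,\om)e_j|\in\{0,2\}$, so convergence in distribution forces eventual coincidence of all columns, and conversely equal columns annihilate $\Sigma_0$. Your explicit persistence argument via the cocycle identity and the measurability discussion for $N(\om)=\inf\{n:A_n\text{ holds}\}$ merely spell out details the paper leaves implicit when it defines $N(\om)=\inf L(\om)$.
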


\begin{proof}
For the dtfs-RDS $T_{\cX^0}$ associated with $\cX^0$ by (\ref{deter_cocy-RDS}),
$$
T_{\cX^0} (n,\om)(s_j)=s_i \quad {\text {if and only if}} \quad P^0
(n,\om) e_j=e_i\, .
$$
Also, $P^0(n,\om) \in \cM_{1, {\rm det}}^+$, and so $|P^0(n,\om) e_i -
P^0(n,\om) e_j|\in \{0,2\}$.
Now, if $\cX^0$ is synchronized then for all $i,j\in \K$ and
$\mu$-a.e.\ $\om \in \Om$,
$$
P^0(n,\om) e_i = P^0 (n,\om)e_j \quad \forall n \ge N(\om) \, .
$$
Since every $v\in \Sigma_0$ is a linear combination of $\{e_i - e_j
:i,j \in \K\}$, also $P^0(n,\om) v = 0$ for all $n\ge
N(\om)$. Conversely, if $\cX^0$ converges in distribution then for
$\mu$-a.e.\ $\om \in \Om$ the set
$$
L(\om) := \{n\in \N_0 : P^0(n,\om) e_i = P^0 (n,\om) e_j \enspace \forall
 i,j\in \K \}
$$
is co-finite, and hence with $N(\om)= \inf L(\om)$ the DRN $\cX^0$ is synchronized.
\end{proof}

As will be seen next, even in the more general case of an arbitrary
MRN it is possible to characterize convergence in distribution. To this
end, given an MRN $\cX$ with transition cocycle $P_{\cX}$, call an
$\cF$-measurable function $p:\Om \to \Sigma_1^+$ an {\em invariant
  distribution\/} of $\cX$ if for $\mu$-a.e.\ $\om \in \Om$,
$$
P_{\cX} (n,\om) p(\om) = p (\theta^n \om) \quad \forall n \in \N_0 \, .
$$

\begin{lemma}\label{prop:syn_distr_equivalence}
Let $\cX$ be an MRN with transition cocycle $P_{\cX}$. Then the
following statements are equivalent:
\begin{enumerate}
\item $\cX$ converges in distribution;
\item there exists an invariant distribution $p$ of $\cX$ such that for every $q\in \Sigma_1^+$ and
 $\mu$-a.e.\ $\om \in \Om$,
$$
    \lim\nolimits_{n\to \infty} |P_{\cX} (n,\theta^{-n}\om) q  -
    p(\om)| = 0\
  $$
(``$p$ is pull-back attracting'');
\item there exists an invariant distribution $p$ of $\cX$ such that
  for every $q\in \Sigma_1^+$ and
 $\mu$-a.e.\ $\om \in \Om$,
 \begin{equation}\label{pushforward_q}
    \lim\nolimits_{n\to \infty} |P_{\cX} (n, \om) q  -
    p(\theta^n \om)| = 0 
    \end{equation}
(``$p$ is forward attracting'').
\end{enumerate}
Moreover, the invariant distributions $p$ in {\rm (ii)} and {\rm (iii)} are uniquely determined and coincide
	$\mu$-a.e.\ on $\Om$.
\end{lemma}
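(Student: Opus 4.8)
The plan is to reduce all three statements to the behaviour of a single scalar quantity, the $\ell^1$-diameter of the iterated simplex, and to exploit the $\mu$-invariance of $\theta$ to pass between the forward and the pull-back pictures. For $\mu$-a.e.\ $\om$ and $n\in\N_0$, set
\[
c_n(\om) = \diam\bigl(P_{\cX}(n,\om)\Sigma_1^+\bigr), \qquad
d_n(\om) = \diam\bigl(P_{\cX}(n,\theta^{-n}\om)\Sigma_1^+\bigr).
\]
Two facts drive the argument. First, both sequences are non-increasing in $n$: since $P_{\cX}(n,\om)\Sigma_1^+$ is the convex hull of the columns $P_{\cX}(n,\om)e_i$, one has $c_n(\om)=\max_{i,j\in\K}|P_{\cX}(n,\om)(e_i-e_j)|$, and because every $A\in\cM_1^+$ satisfies $|A|=1$ (Proposition \ref{prop202}) and hence is non-expansive on $\Sigma_0$, the cocycle identity $P_{\cX}(n+1,\om)=P_{\cX}(1,\theta^n\om)P_{\cX}(n,\om)$ forces $c_{n+1}\le c_n$; the factorisation $P_{\cX}(n+1,\theta^{-(n+1)}\om)=P_{\cX}(n,\theta^{-n}\om)P_{\cX}(1,\theta^{-(n+1)}\om)$ yields the nestedness $P_{\cX}(n+1,\theta^{-(n+1)}\om)\Sigma_1^+\subset P_{\cX}(n,\theta^{-n}\om)\Sigma_1^+$, hence $d_{n+1}\le d_n$. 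Second, because $\theta$ preserves $\mu$, for each fixed $n$ the random variables $c_n$ and $d_n=c_n\circ\theta^{-n}$ have the same distribution. The main obstacle is exactly the passage between the two pictures: convergence in distribution concerns $P_{\cX}(n,\om)$ at a fixed base point, whereas the invariant distribution is produced by the pull-back $P_{\cX}(n,\theta^{-n}\om)$ whose base point drifts with $n$, so no pointwise estimate transfers one to the other; the combination of monotonicity with the distributional identity is the device that overcomes this.

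Using these two facts I obtain the key equivalence $\lim_n c_n=0$ $\mu$-a.e.\ $\Leftrightarrow$ $\lim_n d_n=0$ $\mu$-a.e. Indeed, a non-increasing nonnegative sequence tends to $0$ a.e.\ if and only if it tends to $0$ in probability, since the a.e.\ limit $c_\infty$ obeys $\mu(c_\infty>\epsilon)\le\mu(c_n>\epsilon)$ for every $n$; and convergence in probability is preserved under the distributional identity $d_n\stackrel{d}{=}c_n$. Expanding any $v\in\Sigma_0$ in the differences $e_i-e_j$ shows that statement (i), convergence in distribution, is precisely $\lim_n c_n=0$ $\mu$-a.e.; hence (i) is equivalent to $\lim_n d_n=0$ $\mu$-a.e.\ as well.

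Next I construct the invariant distribution. When $\lim_n d_n=0$ a.e., the nested compacta $P_{\cX}(n,\theta^{-n}\om)\Sigma_1^+$ have diameters shrinking to $0$, so their intersection is a single point $p(\om)$; as a pointwise a.e.\ limit of the $\cF$-measurable maps $\om\mapsto P_{\cX}(n,\theta^{-n}\om)q$, $p$ is $\cF$-measurable with values in $\Sigma_1^+$, and $|P_{\cX}(n,\theta^{-n}\om)q-p(\om)|\le d_n(\om)\to0$ gives pull-back attraction for every $q$. Invariance $P_{\cX}(n,\om)p(\om)=p(\theta^n\om)$ follows by applying $P_{\cX}(n,\om)$ to the defining limit and using the cocycle property to recognise $\lim_m P_{\cX}(n+m,\theta^{-m}\om)q$ as the pull-back limit defining $p(\theta^n\om)$. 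This proves (i)$\Rightarrow$(ii); conversely (ii) forces $|P_{\cX}(n,\theta^{-n}\om)(e_i-e_j)|\to0$, i.e.\ $d_n\to0$ a.e., returning (i). Forward attraction of the same $p$ is then immediate from invariance, since $|P_{\cX}(n,\om)q-p(\theta^n\om)|=|P_{\cX}(n,\om)(q-p(\om))|\le c_n(\om)\to0$, giving (i)$\Rightarrow$(iii); and (iii)$\Rightarrow$(i) follows by writing $P_{\cX}(n,\om)(e_i-e_j)$ as a difference of two terms each converging to $p(\theta^n\om)$.

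Finally, for uniqueness and coincidence: the pull-back limit $p(\om)=\lim_n P_{\cX}(n,\theta^{-n}\om)q$ is independent of the attracting distribution chosen, so the $p$ in (ii) is unique; for (iii), if $p,p'$ are both forward attracting and invariant then $g:=|p-p'|\le 2$ satisfies $g(\theta^n\om)\to0$ a.e., whence $\int g\,d\mu=\int g\circ\theta^n\,d\mu\to0$ by $\theta$-invariance and bounded convergence, forcing $g=0$ a.e. Since the single $p$ built above is simultaneously pull-back and forward attracting, the two unique distributions coincide $\mu$-a.e.
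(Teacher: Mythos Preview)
Your proof is correct and takes a genuinely different route from the paper's. The paper establishes (i)$\Rightarrow$(ii)$\Rightarrow$(iii) via two separate applications of Poincar\'e recurrence: first, starting only from the forward decay $c_n(\om)\to 0$, it finds recurrence times $M_\ell(\om)$ with $\theta^{-M_\ell(\om)}\om$ in a set where $c_{m_\ell}$ is small, and uses these to build a nested family of compacta $C_\ell(\om)$ whose intersection is $p(\om)$; second, it again uses recurrence along the forward orbit to deduce forward attraction from pull-back attraction. Your argument replaces both recurrence steps by a single clean observation: the forward diameter $c_n$ and the pull-back diameter $d_n=c_n\circ\theta^{-n}$ are each non-increasing and share the same law, so a.e.\ convergence to $0$ of one is equivalent (via convergence in probability) to that of the other. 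Once $d_n\to 0$ a.e.\ is in hand, the nested compacta shrink directly, with no need to manufacture good return times; and forward attraction then follows in one line from invariance and $c_n\to 0$. Your uniqueness argument for (iii), using $\int|p-p'|\,{\rm d}\mu=\int|p-p'|\circ\theta^n\,{\rm d}\mu\to 0$, is also neater than appealing to the pull-back construction. The paper's approach is more constructive in that it exhibits explicit times at which contraction occurs, which is in the spirit of the later synchronization-time arguments, but for the present lemma your distributional device is both shorter and more transparent.
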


\begin{proof}
To prove (i)$\Rightarrow$(ii), assume $\cX$ converges in
distribution. For $\mu$-a.e.\ $\om \in \Om$ and every $\ell \in \N$
pick $N_{\ell} (\om) \in \N$ such that
$$
|P_{\cX} (n,\om) e_i - P_{\cX} (n,\om) e_j| < \frac1{\ell} \quad
\forall i,j\in \K, n \ge N_{\ell} (\om) \, .
$$
Also pick $m_{\ell} \in \N$ with $m_{\ell} \ge \ell$ and $\mu (\{
N_{\ell} \le m_{\ell}\})>0$; assume w.l.o.g.\ that $(m_{\ell})_{\ell
  \in \N}$ is increasing. By Poincar\'e recurrence, it can be
assumed that $\theta^{-n}\om \in \{N_{\ell} \le m_{\ell}\}$ for
infinitely many $n$; in particular, pick $M_{\ell} (\om)> m_{\ell} +
M_{\ell-1} (\om) + M_{\ell - 1} (\theta^{-1} \om)$, where $M_0:= 0$, such that $\theta^{-M_{\ell} (\om)} \om \in \{N_{\ell} \le m_{\ell}\}$ for
$\mu$-a.e.\ $\om \in \Om$ and all $\ell \in \N$.
With this, consider the compact set
$$
C_\ell (\om) := P_{\cX} (M_{\ell} (\om), \theta^{-M_{\ell} (\om)}
\om )
\Sigma_1^+ \enspace \subset \enspace \Sigma_1^+ \, .
$$
Since $N_{\ell} (\theta^{-M_{\ell} (\om)}\om) \le m_{\ell}$ whereas
$M_{\ell} (\om)>N_{\ell}(\om)$, clearly $\mbox{\rm diam} \, C_\ell (\om) <
1/\ell$. Moreover, by the cocycle property, for every $n\ge M_{\ell}
(\om)$,
\begin{equation}\label{eqddel}
P_{\cX} (n, \theta^{-n} \om) \Sigma_1^+ = P_{\cX} (M_{\ell} (\om),
\theta^{-M_{\ell} (\om)}\om) P_{\cX} (n - M_{\ell}(\om),
\theta^{- n}\om ) \Sigma_1^+ \subset C_\ell (\om) \, ,
\end{equation}
and hence in particular $C_{\ell + 1} (\om) \subset C_\ell
(\om)$. It follows that for $\mu$-a.e.\ $\om\in \Om$ there exists a
unique $p(\om) \in \Sigma_1^+$ with $\{p(\om)\} = \bigcap_{\ell \in
  \N} C_\ell (\om)$. Clearly, $p:\Om \to \Sigma_1^+$ can be chosen to be
$\cF$-measurable. Furthermore, for $\mu$-a.e.\ $\om \in \Om$ and all
$\ell \in \N$, since $M_{\ell+ 1} (\theta \om) \ge M_{\ell} (\om) +
1$, 
$$
p(\theta \om) \in C_{\ell+1} (\theta \om) = P_{\cX} (1,\om)
P_{\cX} (M_{\ell+1} (\theta\om) -1, \theta^{1 - M_{\ell + 1}(\theta\om)}\om)
\Sigma_1^+  \subset P_{\cX}(1,\om) C_\ell (\om) \, .
$$
Since $\ell$ has been arbitrary, $p(\theta \om) = P_{\cX} (1, \om)
p (\om)$, that is, $p$ is an invariant distribution of $\cX$. Finally,
given $q\in\Sigma_1^+$, deduce from (\ref{eqddel}) that  $P_{\cX} (n,\theta^{-n}
\om) q \in C_\ell(\om)$ for $\mu$-a.e.\ $\om \in \Om$ and $n\ge
M_{\ell}(\om)$.  Thus
$$
|P_{\cX} (n ,\theta^{-n} \om) q - p(\om)| \le \frac2{\ell} \quad
\forall n \ge M_{\ell} (\om) \, ,
$$
and consequently $\lim_{n\to \infty} |P_{\cX} (n,\theta^{-n}\om) q - p(\om)|=0$.

To prove (ii)$\Rightarrow$(iii), let $p:\Om \to \Sigma_1^+$ be an
invariant distribution as in (ii). For $\mu$-a.e.\ $\om \in \Om$ and
every $\ell \in \N$ pick $N_{\ell}'(\om) \in \N$ such that
$$
|P_{\cX} (n, \theta^{-n} \om) e_j - p(\om)| < \frac1{\ell} \quad
\forall j \in \K, n \ge N_{\ell}'(\om) \, .
$$
Similarly to above, pick $m'_{\ell}\in \N$ with $m'_{\ell}\ge \ell$
and $\mu (\{ N'_{\ell} \le m'_{\ell}\})>0$ such that
$(m'_{\ell})_{\ell \in \N}$ is increasing. Again by Poincar\'{e}
recurrence, one can choose $M'_{\ell} (\om) \ge m'_{\ell}$ with
$\theta^{M'_{\ell} (\om)} \om \in \{N'_{\ell} \le m'_{\ell}\}$, and
consequently
$$
|P_{\cX} (M'_{\ell} (\om), \om) e_j - p(\theta^{M'_{\ell} (\om)}\om)| <
\frac1{\ell} \quad \forall j \in \K \, .
$$
For every $q\in\Sigma_1^+$ and $\mu$-a.e.\ $\om \in \Om$,
therefore, since $p$ is invariant,
\begin{align*}
|P_{\cX} (n,\om) q  - p(\theta^n \om)| & = \big|
P_{\cX} (n - M'_{\ell} (\om), \theta^{M'_{\ell} (\om)}\om ) \bigl( 
P_{\cX} (M'_{\ell}(\om), \om) q - p(\theta^{M'_{\ell} (\om)}\om )
\bigr)
\big|\\
& \le |P_{\cX} (M'_{\ell}(\om), \om) q - p(\theta^{M'_{\ell}
  (\om)}\om )| < \frac1{\ell} \, ,
\end{align*}
provided that $n\ge M'_{\ell} (\om)$. Since $\ell \in \N$ has been arbitrary,
this proves (\ref{pushforward_q}).

Finally, to see that (iii)$\Rightarrow$(i) simply choose $q=e_i$ and
$q=e_j$, respectively, and observe that for $\mu$-a.e.\ $\om \in \Om$,
$$
|P_{\cX} (n,\om) e_i - P_{\cX} (n,\om) e_j| \le |P_{\cX} (n , \om) e_i
- p (\theta^n \om)| + |P_{\cX} (n,\om) e_j - p(\theta^n \om)| \to 0
\quad \mbox{\rm as } n \to \infty \, ,
$$
showing that $\cX$ converges in distribution.

The assertion regarding the $\mu$-a.e.\ uniqueness of the invariant
distribution is obvious (and justifies usage of the common symbol $p$).
\end{proof}

Note that for a DRN $\cX^0$ every invariant distribution $p$ of
$\cX^0$ is concentrated on a single state, that is, $p(\om) =
e_{J(\om)}$ for a unique $J(\om)
\in \K$. Lemmas \ref{lem:A-P_syn} and \ref{prop:syn_distr_equivalence} together therefore have the
following corollary.

\begin{prop}\label{cor:A-syn}
Let $\cX^0$ be a synchronized DRN. Then there exist
$\cF$-measurable functions $J:\Om \to \K$ and $N^{\pm} : \Om \to \N$
such that for $\mu$-a.e.\ $\om \in \Om$,
$$
P^0(n,\om) e_{J(\om)} = e_{J(\theta^n \om)} \quad \forall n \in \N_0
\, ,
$$
and also, for every $j\in \K$,
\begin{align*}
P^0 (n,\om) e_j & = e_{J(\theta^n \om)} \quad \forall n \ge N^+ (\om)
\, , \\
P^0(n , \theta^{-n} \om) e_j & = e_{J(\om)} \enspace \enspace \quad  \forall n \ge N^-(\om)
\, .
\end{align*}
\end{prop}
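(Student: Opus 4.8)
The plan is to obtain the proposition from Lemmas~\ref{lem:A-P_syn} and~\ref{prop:syn_distr_equivalence}, exploiting the rigidity of deterministic cocycles. Since $\cX^0$ is synchronized, Lemma~\ref{lem:A-P_syn} shows it converges in distribution, and Lemma~\ref{prop:syn_distr_equivalence} then furnishes a unique $\cF$-measurable invariant distribution $p:\Om\to\Sigma_1^+$ that is both pull-back and forward attracting. The structural fact I would use throughout is that, because $\cX^0$ is a DRN, Proposition~\ref{prop202}(ii) gives $P^0(n,\om)\{e_1,\ldots,e_k\}\subset\{e_1,\ldots,e_k\}$; hence each vector $P^0(n,\om)e_j$ or $P^0(n,\theta^{-n}\om)e_j$ is again a basis vector. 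Together with the elementary identity $|e_i-e_j|=2$ for $i\neq j$, this forces any convergent sequence of such vectors to be eventually constant and equal to its limit.

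First I would show $p$ is concentrated on a single state. Fixing any $j\in\K$ and applying pull-back attraction to $q=e_j$, the basis vectors $P^0(n,\theta^{-n}\om)e_j$ converge to $p(\om)$; since the basis is a finite, hence discrete, set, the sequence is eventually constant, so its limit $p(\om)$ is itself a basis vector $e_{J(\om)}$ for a unique $J(\om)\in\K$. Measurability of $J$ follows from that of $p$. Feeding $p(\om)=e_{J(\om)}$ into the invariance identity $P^0(n,\om)p(\om)=p(\theta^n\om)$ yields $P^0(n,\om)e_{J(\om)}=e_{J(\theta^n\om)}$ for all $n\in\N_0$, which is the first displayed identity (valid $\mu$-a.e.\ for all $n$ simultaneously, as a countable union of null sets is null).

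Next I would produce the uniform times $N^{\pm}$. For each fixed $j\in\K$, forward attraction applied to $q=e_j$ gives $|P^0(n,\om)e_j-e_{J(\theta^n\om)}|\to 0$; as both entries are basis vectors the distance is $0$ or $2$, so convergence means equality for all large $n$. Taking the least such threshold $N^+_j(\om)$ and setting $N^+(\om)=\max_{j\in\K}N^+_j(\om)$---a finite maximum over the finite set $\K$---gives $P^0(n,\om)e_j=e_{J(\theta^n\om)}$ for all $j\in\K$ and $n\ge N^+(\om)$. Applying pull-back attraction to each $e_j$ in exactly the same way, and taking a maximum over $j\in\K$, produces $N^-(\om)$ with $P^0(n,\theta^{-n}\om)e_j=e_{J(\om)}$ for all $j\in\K$ and $n\ge N^-(\om)$.

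The only point I would treat with care---though it presents no real obstacle---is the $\cF$-measurability of $N^{\pm}$. I would write them as hitting times, e.g.\ $N^+(\om)=\inf\{m\in\N:\ P^0(n,\om)e_j=e_{J(\theta^n\om)}\ \fora j\in\K,\ n\ge m\}$, and deduce measurability from the joint measurability of $(n,\om)\mapsto P^0(n,\om)$, the $\cF$-measurability of $J$, and the measurability of $\om\mapsto\theta^n\om$: each defining condition is an equality of $\cF$-measurable $\K$-valued data, so the admissible index sets and their infima are $\cF$-measurable. All the genuine content sits in the eventual-constancy of basis-vector sequences supplied by the two lemmas and Proposition~\ref{prop202}(ii).
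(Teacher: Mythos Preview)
Your proof is correct and follows exactly the route the paper indicates: the paper presents this proposition as a direct corollary of Lemmas~\ref{lem:A-P_syn} and~\ref{prop:syn_distr_equivalence}, observing only that the invariant distribution of a (synchronized) DRN is concentrated on a single state $e_{J(\om)}$, and leaves the remaining details implicit. Your argument supplies precisely those details---using Proposition~\ref{prop202}(ii) and the discreteness of $\{e_1,\ldots,e_k\}$ to force eventual constancy of the basis-vector sequences produced by pull-back and forward attraction---and handles the measurability of $N^\pm$ with appropriate care.
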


Any function $J$ as in Proposition \ref{cor:A-syn} henceforth is
referred to as a {\em synchronization index\/} of the DRN $\cX^0$, and
$N^+$, $N^-$ are a {\em forward\/} and a {\em pull-back synchronization
time}, respectively. Note that $J$ is determined uniquely $\mu$-a.e.\
on $\Om$, whereas $N^{\pm}$ clearly are not. 
Also, it is not hard to see that
$N^\pm$ can be assumed constant $\mu$-a.e.\ on $\Om$ whenever
$\cX^0$ is uniformly synchronized.

\begin{remark}
As pointed out by the referee, it would be possible to {\em define}, for
$\mu$-a.e.\ $\om \in \Om$,
$$
N^+ (\om ) = \min\{n\in\N: P^0(n,\om)e_j=e_{J(\theta^n\om)}\enspace \forall
j\in\K\}\, ,
$$
and similarly for $N^-$. However, to emphasize that crucial parts of
this work, notably expression (\ref{eq4n0}) below, are independent of
the specific choice for (forward or pull-back) synchronization times,
no such definitions are adopted. Unless explicitly stated otherwise,
synchronization times $N^{\pm}$ can be arbitrary as long as they
comply with Proposition \ref{cor:A-syn}; see also Proposition \ref{prop6_1} below.
\end{remark}

The remainder of this section reviews a few pertinent facts regarding
Lyapunov exponents. Given an MRN $\cX$, for every $\om \in \Om$ and
$v\in \R^k$ let
\begin{equation}\label{LE}
\lambda_{\cX} (\om,
v)=\limsup\nolimits_{n\to\infty}\dfrac{\log|P_{\cX}(n,\om) v|}{n} \, ,
\end{equation}
with the convention that $\log 0 := - \infty$. As a consequence of the
classical Multiplicative Ergodic Theorem \cite{arnold98book,O68}, for $\mu$-a.e.\ $\om \in
\Om$ the $\limsup$ in (\ref{LE}) actually is a limit, and
$\lambda_{\cX} (\om , \, \cdot \, )$ attains at most $k$ different
real values which are constant $\mu$-a.e.\ on $\Om$. In fact, since $|P_{\cX}
(n,\om)|=1$ and $P_{\cX}
(n,\om) \Sigma_a \subset \Sigma_a$ for every $a$ by
Propositions \ref{prop201} and \ref{prop210}, a bit more can be said.

\begin{prop}\label{prop:LE-for-Mark-cocy}
Let $\cX$ be an MRN. Then, for $\mu$-a.e.\ $\om \in \Om$ and every
$v\in \R^k$,
\begin{equation}\label{LE=0}
 \lambda_{\cX}(\om,  v)\le0 \, ,
\end{equation}
and equality holds in {\rm (\ref{LE=0})} whenever $v \not \in
\Sigma_0$; in particular, $0$ is a Lyapunov exponent of $\cX$.
\end{prop}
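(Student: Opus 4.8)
The plan is to read off both inequalities directly from the two structural facts recorded in Propositions \ref{prop201}, \ref{prop202}, and \ref{prop210}: that each $P_{\cX}(n,\om)$ lies in $\cM_1^+$ and hence has operator norm $1$, and that $\cM_1$-matrices preserve every affine hyperplane $\Sigma_a$. Crucially, both estimates will hold uniformly in $n$ and for every $\om$ at which $P_{\cX}(n,\om)\in\cM_1^+$ for all $n$; the ``$\mu$-a.e.''\ caveat enters only through this membership, so no ergodic-theoretic limit is needed for the inequalities themselves.

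For the upper bound, I would fix $\om$ with $P_{\cX}(n,\om)\in\cM_1^+$ for all $n\in\N_0$. By Proposition \ref{prop202}(i) each such matrix satisfies $|P_{\cX}(n,\om)|=1$, whence $|P_{\cX}(n,\om)v|\le|v|$ for every $v\in\R^k$. Taking logarithms and dividing by $n$ gives $\frac1n\log|P_{\cX}(n,\om)v|\le\frac1n\log|v|\to0$, so that $\lambda_{\cX}(\om,v)\le0$. (If $P_{\cX}(n,\om)v=0$ for some, and hence by the cocycle property of Proposition \ref{prop210}(ii) all larger, $n$, the quotient is eventually $-\infty$ and the bound is trivial.) This establishes \eqref{LE=0} for every $v$.

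For the reverse bound when $v\notin\Sigma_0$, I would set $a:=\sum_{j\in\K}v_j\ne0$, so $v\in\Sigma_a$. Since $P_{\cX}(n,\om)\in\cM_1^+\subset\cM_1$, Proposition \ref{prop201}(iii) yields $P_{\cX}(n,\om)\Sigma_a\subset\Sigma_a$, hence $P_{\cX}(n,\om)v\in\Sigma_a$ for all $n$. Using the elementary fact $\min_{w\in\Sigma_a}|w|=|a|>0$ I get $|P_{\cX}(n,\om)v|\ge|a|$, so $\frac1n\log|P_{\cX}(n,\om)v|\ge\frac1n\log|a|\to0$ and therefore $\liminf_n\frac1n\log|P_{\cX}(n,\om)v|\ge0$. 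Sandwiched against the upper bound, this forces the limit to exist and equal $0$, i.e.\ $\lambda_{\cX}(\om,v)=0$. Finally, taking $v=e_1\in\Sigma_1$ shows $\lambda_{\cX}(\om,e_1)=0$; since by the Multiplicative Ergodic Theorem the Lyapunov exponents of $\cX$ are exactly the finitely many values attained by $\lambda_{\cX}(\om,\cdot\,)$, this exhibits $0$ as one of them.

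I do not expect a substantial obstacle here: the whole argument is a sandwich between the contraction estimate $|P_{\cX}(n,\om)|=1$ and the $\Sigma_a$-invariance. The only points requiring care are bookkeeping — handling the degenerate case $P_{\cX}(n,\om)v=0$ under the convention $\log0=-\infty$, and being explicit that ``$\mu$-a.e.''\ refers solely to the full-measure set on which Proposition \ref{prop210} applies, rather than to any probabilistic limit theorem.
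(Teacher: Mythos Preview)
Your argument is correct and is precisely the one the paper has in mind: it states the proposition as an immediate consequence of $|P_{\cX}(n,\om)|=1$ and $P_{\cX}(n,\om)\Sigma_a\subset\Sigma_a$ (via Propositions \ref{prop201} and \ref{prop210}), which is exactly your sandwich between the norm bound and the lower bound $\min_{w\in\Sigma_a}|w|=|a|$. The only elaboration you add beyond the paper's one-line justification is the bookkeeping for the degenerate case and the explicit identification of $0$ as a Lyapunov exponent via $v=e_1$, both of which are fine.
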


In the special case of a DRN, the Lyapunov exponents $\lambda^0:=
\lambda_{\cX^0}$ can equal only $-\infty$ or $0$; as it turns out,
they also characterize synchronization.

\begin{prop}\cite[Thm.A]{HQWYY}\label{prop:A0-syn-LE}
Let $\cX^0$ be a DRN. Then $\cX^0$ is synchronized if and only if for
$\mu$-a.e.\ $\om \in \Om$,
$$
\lambda^0(\om, v)=\left\{
\begin{array}{ll}
- \infty & \mbox{\rm if } v \in \Sigma_0\, , \\
0 & \mbox{\rm if } v \not \in \Sigma_0 \, .
\end{array}
\right.
$$
\end{prop}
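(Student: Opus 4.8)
The plan is to treat the two regimes $v\notin\Sigma_0$ and $v\in\Sigma_0$ separately, and to reduce the latter to the equivalence between synchronization and convergence in distribution already established in Lemma \ref{lem:A-P_syn}. For $v\notin\Sigma_0$ there is nothing to do: a DRN is in particular an MRN, so Proposition \ref{prop:LE-for-Mark-cocy} yields $\lambda^0(\om,v)=0$ for $\mu$-a.e.\ $\om$ whenever $v\notin\Sigma_0$, irrespective of synchronization. Thus the whole content of the proposition lies in showing that $\cX^0$ is synchronized if and only if $\lambda^0(\om,v)=-\infty$ for all $v\in\Sigma_0$ and $\mu$-a.e.\ $\om$.

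For the forward implication, suppose $\cX^0$ is synchronized with synchronization time $N$. By the correspondence \eqref{deter_cocy-RDS} between $T_{\cX^0}$ and $P^0$, the identity $T_{\cX^0}(n,\om)(s_i)=T_{\cX^0}(n,\om)(s_j)$ translates into $P^0(n,\om)e_i=P^0(n,\om)e_j$, i.e.\ $P^0(n,\om)(e_i-e_j)=0$, for all $i,j\in\K$ and all $n\ge N(\om)$. Since $\{e_i-e_j:i,j\in\K\}$ spans $\Sigma_0$ and $P^0(n,\om)$ is linear, it follows that $P^0(n,\om)v=0$ for \emph{every} $v\in\Sigma_0$ once $n\ge N(\om)$. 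With the convention $\log 0:=-\infty$ this gives $\log|P^0(n,\om)v|=-\infty$ for all large $n$, and hence $\lambda^0(\om,v)=-\infty$.

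For the converse, suppose $\lambda^0(\om,v)=-\infty$ for all $v\in\Sigma_0$ and $\mu$-a.e.\ $\om$. Since $\lambda^0(\om,v)=\limsup_{n\to\infty}n^{-1}\log|P^0(n,\om)v|=-\infty$ forces $n^{-1}\log|P^0(n,\om)v|\to-\infty$, and therefore $|P^0(n,\om)v|\to 0$, the cocycle satisfies $\lim_{n\to\infty}|P^0(n,\om)v|=0$ for all $v\in\Sigma_0$ and a.e.\ $\om$; that is, $\cX^0$ converges in distribution. By Lemma \ref{lem:A-P_syn}, $\cX^0$ is synchronized. Combining the two implications with the $v\notin\Sigma_0$ case completes the argument.

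The only genuinely delicate point is the passage, in the forward direction, from synchronization to the \emph{exact} vanishing $P^0(n,\om)v=0$ rather than mere smallness: this is what upgrades convergence in distribution to the super-exponential estimate $\lambda^0=-\infty$, and it is available precisely because $\cX^0$ is deterministic, so that $P^0(n,\om)\in\cM_{1,{\rm det}}^+$ collapses the coordinates of $v$ along an honest partition of $\K$. Equivalently, for fixed $v$ the quantity $|P^0(n,\om)v|$ ranges over the finite set $\{\sum_i|\sum_{j\in A_i}v_j|\}$ indexed by partitions $\{A_i\}$ of $\K$, hence is bounded below by a positive constant whenever nonzero; this already forces $\lambda^0(\om,v)\in\{-\infty,0\}$ and shows the dichotomy is sharp. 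Some care is also needed with the order of quantifiers: one works on the full-measure set where the stated formula holds simultaneously for all $v$, and in the forward direction the single threshold $N(\om)$ supplied by synchronization serves every generator $e_i-e_j$ at once.
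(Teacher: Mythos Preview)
Your proof is correct. Note, however, that the paper does not supply its own proof of this proposition: it is quoted from \cite[Thm.~A]{HQWYY} and stated without argument, so there is no in-paper proof to compare against.

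That said, your route is entirely self-contained within the paper's framework and is the natural one: the $v\notin\Sigma_0$ case is immediate from Proposition~\ref{prop:LE-for-Mark-cocy}; the forward implication uses that synchronization forces $P^0(n,\om)$ to annihilate $\Sigma_0$ for $n\ge N(\om)$ (exactly the observation used in the proof of Lemma~\ref{lem:A-P_syn}); and the reverse implication reduces $\lambda^0=-\infty$ on $\Sigma_0$ to convergence in distribution and then invokes Lemma~\ref{lem:A-P_syn}. Your closing remark that $|P^0(n,\om)v|$ ranges over a finite set for fixed $v$, so that $\lambda^0(\om,v)\in\{-\infty,0\}$ automatically, is a nice sharpening that explains why the deterministic case admits only these two values.
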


By noting that $e_i - e_j \in \Sigma_0$ for all $i,j\in \K$, a simple
sufficient condition for convergence in distribution of an MRN follows
immediately.

\begin{prop}\label{prop:simple-LE-to-syn}
Let $\cX$ be an MRN. If $\lambda_{\cX} (\om, v)<0$ for $\mu$-a.e.\
$\om \in \Om$ and all $v\in
\Sigma_0$ then $\cX$ converges in distribution.
\end{prop}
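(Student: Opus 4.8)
The plan is to verify the equivalent characterization of convergence in distribution, namely that $\lim_{n\to\infty}|P_{\cX}(n,\om)v|=0$ for every $v\in\Sigma_0$ and $\mu$-a.e.\ $\om\in\Om$. The whole argument rests on one elementary observation: a strictly negative $\limsup$ of $n^{-1}\log|P_{\cX}(n,\om)v|$ forces $|P_{\cX}(n,\om)v|$ to decay exponentially, and in particular to tend to $0$.

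Concretely, I would fix a $\mu$-full set of $\om$ on which the hypothesis $\lambda_{\cX}(\om,v)<0$ holds for all $v\in\Sigma_0$, take any such $\om$ and any $v\in\Sigma_0$, and write $-c:=\lambda_{\cX}(\om,v)<0$. By the definition \eqref{LE} of the Lyapunov exponent, for any fixed $c'\in(0,c)$ one has $n^{-1}\log|P_{\cX}(n,\om)v|<-c'$ for all sufficiently large $n$, whence $|P_{\cX}(n,\om)v|<e^{-c'n}\to0$. Thus $\lim_{n\to\infty}|P_{\cX}(n,\om)v|=0$ for each individual $v$.

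To obtain this simultaneously for all $v\in\Sigma_0$, I would exploit the finite-dimensionality of $\Sigma_0$: since $\{e_1-e_j:2\le j\le k\}$ is a basis of the $(k-1)$-dimensional space $\Sigma_0$ and each $e_1-e_j$ lies in $\Sigma_0$, the hypothesis applies to every basis vector, and after intersecting the finitely many corresponding $\mu$-full sets one obtains $|P_{\cX}(n,\om)(e_1-e_j)|\to0$ for all $j$ on a common full-measure set. For a general $v=\sum_{j=2}^{k}c_j(e_1-e_j)\in\Sigma_0$, linearity of the matrix action and the triangle inequality give $|P_{\cX}(n,\om)v|\le\sum_{j=2}^{k}|c_j|\,|P_{\cX}(n,\om)(e_1-e_j)|\to0$, which is exactly convergence in distribution. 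The only delicate point---and it is minor---is this uniformity of the exceptional null set over the continuum of vectors $v$; the passage to a finite basis resolves it, and alternatively the Multiplicative Ergodic Theorem guarantees that $\lambda_{\cX}(\om,\cdot)$ assumes only finitely many values, constant $\mu$-a.e., so that a single negative bound controls all of $\Sigma_0$ at once. No real obstacle arises: as the surrounding text indicates, the proposition follows immediately from the definitions.
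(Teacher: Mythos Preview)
Your argument is correct and matches the paper's approach: the paper does not give a formal proof but simply notes that since $e_i-e_j\in\Sigma_0$, the result ``follows immediately,'' which is precisely the observation you spell out. Your discussion of null-set uniformity is somewhat superfluous given that the hypothesis already quantifies ``for $\mu$-a.e.\ $\om$ and all $v\in\Sigma_0$'' in the right order, but it does no harm.
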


\begin{remark}
In \cite{arnold94positive}, a random Perron--Frobenius theorem is established for
positive (that is, strongly monotone) cocycles under $\log$-integrability
conditions, which in turn yields convergence in distribution. In
general, however, the transition cocycle $P_{\cX}$ of an MRN $\cX$ neither is positive (due
to possible zero entries) nor does it satisfy the $\log$-integrability conditions.
\end{remark}

\section{Convergence in distribution under Markov perturbations}

This section establishes two of the main results of this work,
Theorems \ref{thm:uniq_inv_distri} and \ref{thm:non-unif_syn}. The
proofs depend on three simple observations, of which the continuity
of Lyapunov exponents under Markov perturbations (Lemma
\ref{thm:markov-pert-simple-LE}) may be of independent
interest. Convergence in distribution, as well as continuity of the
invariant distribution, follow directly from the continuity of
Lyapunov exponents. First, observe that the cocycle property and an
induction argument immediately yield

\begin{prop}\label{lem:P-M}
    Let $\{\cX^{\var}\}$ be a Markov perturbation of a DRN $\cX^0.$ Then, for $\mu$-a.e. $\om\in\Om,$
 $$
    |P_{\cX^\var}(n,\om)-P^0(n,\om)|\le n\var\quad\forall \var \ge 0 ,
    n\in \N_0 \, .
$$
 \end{prop}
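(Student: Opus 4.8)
The plan is to argue by induction on $n$, leveraging the cocycle property of Proposition \ref{prop210}(ii) together with the fact that every column-stochastic matrix is nonexpansive in the operator norm. The base cases are immediate: for $n=0$ both cocycles equal $I_k$ by Proposition \ref{prop210}(i), so the left-hand side vanishes; and for $n=1$ the bound $|P_{\cX^\var}(1,\om)-P^0(1,\om)|\le\var$ is precisely the defining property of a Markov perturbation. So all the content is in propagating the $n=1$ estimate through the product.

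For the inductive step, suppose the claimed bound holds at some $n\in\N$. I would write the one-step factorizations $P_{\cX^\var}(n+1,\om)=P_{\cX^\var}(1,\theta^n\om)\,P_{\cX^\var}(n,\om)$ and $P^0(n+1,\om)=P^0(1,\theta^n\om)\,P^0(n,\om)$, insert a mixed term, and split the difference additively:
\begin{align*}
P_{\cX^\var}(n+1,\om)-P^0(n+1,\om) &= P_{\cX^\var}(1,\theta^n\om)\bigl(P_{\cX^\var}(n,\om)-P^0(n,\om)\bigr)\\
&\quad+\bigl(P_{\cX^\var}(1,\theta^n\om)-P^0(1,\theta^n\om)\bigr)P^0(n,\om)\, .
\end{align*}
Here $P_{\cX^\var}(1,\theta^n\om)\in\cM_1^+$ by Proposition \ref{prop210}(i), while $P^0(n,\om)$ is a product of matrices in $\cM_{1,{\rm det}}^+$ and hence again lies in $\cM_{1,{\rm det}}^+\subset\cM_1^+$; by Proposition \ref{prop202}(i) each therefore has operator norm $1$. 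Submultiplicativity of the induced norm, the inductive hypothesis $|P_{\cX^\var}(n,\om)-P^0(n,\om)|\le n\var$, and the one-step bound $|P_{\cX^\var}(1,\theta^n\om)-P^0(1,\theta^n\om)|\le\var$ then yield $n\var+\var=(n+1)\var$, closing the induction. (A telescoping expansion of the full product gives the same estimate in one stroke, but the two-term induction is cleaner and matches the factorization already recorded in Proposition \ref{prop210}.)

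The only point that genuinely requires care — and the main obstacle, such as it is — is measure-theoretic bookkeeping: the defining inequality, the cocycle property, and the stochasticity conclusions each hold only for $\mu$-a.e.\ $\om$, yet the inductive step evaluates the one-step bound at the shifted point $\theta^n\om$. To obtain a single full-measure set valid simultaneously for all $n$, I would fix a full-measure set $\Om_0\subset\Om$ on which $|P_{\cX^\var}(1,\cdot)-P^0(1,\cdot)|\le\var$ (for all $\var\ge 0$) together with the conclusions of Propositions \ref{prop210} and \ref{prop202} hold, and then pass to the set $\bigcap_{n\ge 0}\theta^{-n}\Om_0$. Since $\theta$ is $\mu$-preserving, every $\theta^{-n}\Om_0$ has full measure, hence so does this countable intersection; and for each $\om$ in it one has $\theta^n\om\in\Om_0$ for all $n\ge 0$, which is exactly what the induction requires.
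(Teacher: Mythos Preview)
Your argument is correct and is exactly the approach the paper has in mind: the text introduces the proposition by noting that ``the cocycle property and an induction argument immediately yield'' it, without spelling out details. Your two-term splitting via the cocycle factorization, the use of $|A|=1$ for $A\in\cM_1^+$, and the countable-intersection fix for the $\mu$-nullsets are precisely the routine steps being elided.
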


Next, given any synchronized DRN $\cX^0$, recall from Proposition
\ref{cor:A-syn} the notion of forward synchronization time $N^+ : \Om \to
\N$. Along a typical realization of the extrinsic noise the value of
$N^+$ reasonably often is not too large.

\begin{lemma}\label{lem:sequence}
Let $\cX^0$ be a synchronized DRN, and $N^+$ a forward synchronization
time. Let $m\in \N$ be such that $\mu (\{ N^+ \le m\}) >0$. Then, for
$\mu$-a.e.\ $\om\in\Om$ there exists a sequence $(n_{\ell})_{\ell \in
  \N}$ such that $n_{\ell + 1} - n_{\ell} \ge m$ and
$N^+(\theta^{n_{\ell}}\om )\le m$ for all $\ell \in \N$, as
well as $\limsup_{\ell \to \infty} \ell/n_{\ell} \ge \mu(\{ N^+ \le m\})/m$.
\end{lemma}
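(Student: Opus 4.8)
The plan is to extract $(n_\ell)$ from the set of return times of $\om$ to the set $A := \{N^+ \le m\}$, which satisfies $\mu(A) > 0$ by hypothesis, using ergodicity to control the density of these return times and a greedy selection to impose the gap condition. Since $\theta$ is ergodic and $\mu$-preserving, Birkhoff's ergodic theorem (applied to the indicator function of $A$) yields, for $\mu$-a.e.\ $\om \in \Om$,
$$
\lim_{n\to\infty} \tfrac1n\, \#\{1 \le j \le n : \theta^j \om \in A\} = \mu(A) > 0 \, .
$$
In particular the return set $R(\om) := \{j \in \N : \theta^j \om \in A\}$ is infinite for $\mu$-a.e.\ $\om$, and I would restrict attention to such $\om$ throughout.

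Next I would define $(n_\ell)$ greedily: set $n_1 := \min R(\om)$ and, inductively, $n_{\ell+1} := \min\{j \in R(\om) : j \ge n_\ell + m\}$. Infinitude of $R(\om)$ makes this well defined, and by construction $n_{\ell+1} - n_\ell \ge m$ while $\theta^{n_\ell}\om \in A$, i.e.\ $N^+(\theta^{n_\ell}\om) \le m$, for every $\ell \in \N$. This already secures the first two requirements of the lemma, so that only the density lower bound remains.

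For the density bound, write $F(n) := \#(R(\om) \cap \{1, \ldots, n\})$, so that $F(n)/n \to \mu(A)$. The greedy rule guarantees that $R(\om)$ meets no integer in $[n_i + m,\, n_{i+1} - 1]$; hence every element of $R(\om) \cap \{1, \ldots, n_\ell\}$ lies in one of the $\ell$ length-$m$ windows $\{n_i, n_i + 1, \ldots, n_i + m - 1\}$, $1 \le i \le \ell$, giving the key estimate $F(n_\ell) \le \ell m$. Letting $\ell \to \infty$, so that $n_\ell \to \infty$, and invoking the Birkhoff limit, $\ell m \ge F(n_\ell) = (\mu(A) + o(1))\, n_\ell$, whence $\liminf_{\ell \to \infty} \ell/n_\ell \ge \mu(A)/m$ --- even stronger than the claimed $\limsup$ inequality. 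The one point demanding care --- the main obstacle --- is precisely this counting step: one must verify that imposing the gap $n_{\ell+1} - n_\ell \ge m$ dilutes the asymptotic density of visits by a factor of at most $1/m$, and the bound $F(n_\ell) \le \ell m$ is exactly what quantifies this loss.
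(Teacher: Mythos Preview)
Your argument is correct, and in fact yields the stronger conclusion $\liminf_{\ell\to\infty}\ell/n_\ell\ge\mu(A)/m$. The paper proceeds differently: rather than a greedy selection, it partitions $\N$ into the $m$ residue classes $i+m\N$, $1\le i\le m$, and observes by pigeonhole that at least one class satisfies
\[
\limsup_{n\to\infty}\frac{\#\bigl(\{k\in\N:\theta^k\om\in A\}\cap(i+m\N)\cap\{1,\ldots,n\}\bigr)}{n}\ge\frac{\mu(A)}{m}\, ,
\]
then takes $(n_\ell)$ to be the elements of $\{k:\theta^k\om\in A\}\cap(i+m\N)$ in increasing order. The gap condition is then automatic from the arithmetic progression. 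Your greedy construction trades the residue-class structure for the window-counting estimate $F(n_\ell)\le\ell m$; this buys you the $\liminf$ bound rather than merely $\limsup$, since every return up to $n_\ell$ is captured, whereas the paper discards returns in all but one residue class and so can only guarantee a $\limsup$. Both approaches are short, but yours extracts slightly more from the same Birkhoff input.
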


\begin{proof}
For $\mu$-a.e.\ $\om \in \Om$, the Birkhoff ergodic theorem yields
$$
\lim\nolimits_{n\to \infty} \frac{\# ( \{ k\in
  \N : N^+ (\theta^k\om) \le m\} \cap \{1, \ldots , n\}  )}{n} = \mu(\{ N^+ \le m\}) \, ,
$$
and hence for at least one $1\le i\le m$, possibly depending on $\om$,
$$
\limsup \nolimits_{n\to \infty} \frac{\# ( \{ k\in
  \N : N^+ (\theta^k\om) \le m\} \cap (i+m\N) \cap \{1, \ldots , n\} )}{n} \ge
\frac{\mu(\{ N^+ \le m\}) }{m} \, .
$$
In particular, the set $ \{ k\in  \N : N^+ (\theta^k\om) \le m\} \cap
(i+m\N)$ is infinite, and writing it as $\{n_{\ell} : \ell \in \N\}$
with $n_1<n_2 < \ldots$ yields a sequence $(n_{\ell})$ that has all the
asserted properties.
\end{proof}

The final preliminary observation establishes the continuity of all
Lyapunov exponents of MRN that are Markov perturbations of a
synchronized DRN. As pointed out in the Introduction, synchronization
of the unperturbed DRN is crucial for this result.

\begin{lemma}\label{thm:markov-pert-simple-LE}
Let  $\{\cX^{\var}\}$ be a Markov perturbation of a synchronized DRN
$\cX^0$. Then, for every sufficiently small $\var\ge 0$, for
 $\mu$-a.e.\ $\om\in\Om$ and every $v\in \R^k$, 
 the Lyapunov exponents of $\cX^\var$ satisfy
 \begin{equation}\label{LE_negative}
      \lambda_{\cX^{\var}} (\om, v) < 0 \enspace \mbox{if } v \in
      \Sigma_0 \, , \quad
 \lambda_{\cX^{\var}} (\om, v) = 0 \enspace \mbox{if } v \not \in
      \Sigma_0 \, .
\end{equation}
 Moreover, $\lambda_{\cX^{\var}}$ is continuous at $\var=0$, that is,
 $\lim_{\var\to 0} \lambda_{\cX^{\var}} (\om, v)=\lambda^0(\om, v)$.
\end{lemma}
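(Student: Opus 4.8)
The plan is to split on whether $v\in\Sigma_0$ and to reduce everything to a single quantitative contraction estimate on $\Sigma_0$. The case $v\notin\Sigma_0$ is immediate: by Proposition \ref{prop:LE-for-Mark-cocy}, $\lambda_{\cX^\var}(\om,v)=0$ for every MRN and every $v\notin\Sigma_0$, hence for all $\var\ge0$; since $\lambda^0(\om,v)=0$ as well by Proposition \ref{prop:A0-syn-LE}, both \eqref{LE_negative} and continuity hold trivially there. All the work is thus concentrated on $\Sigma_0$, which is invariant under every $P_{\cX^\var}(n,\om)$ because $P_{\cX^\var}(1,\om)\in\cM_1^+\subset\cM_1$ and $\cM_1$ preserves $\Sigma_0$ (Proposition \ref{prop201}). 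I intend to show that the top Lyapunov exponent of the restricted cocycle, $\Lambda_\var(\om):=\lim_{n}\frac1n\log\|P_{\cX^\var}(n,\om)|_{\Sigma_0}\|$ (where $\|A|_{\Sigma_0}\|:=\max_{v\in\Sigma_0,\,|v|=1}|Av|$), satisfies a bound of the form $\Lambda_\var(\om)\le\rho\log(m\var)$ for one fixed $m$ and one fixed $\rho>0$. This single inequality yields both the strict negativity in \eqref{LE_negative} (once $\var<1/m$) and the continuity statement, since letting $\var\to0$ drives the right-hand side to $-\infty=\lambda^0(\om,v)$.

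The key observation is that at the ``good times'' of Lemma \ref{lem:sequence} the synchronized cocycle $P^0$ annihilates $\Sigma_0$ outright. Fix $m\in\N$ with $\mu(\{N^+\le m\})>0$ (possible since $N^+<\infty$ $\mu$-a.e.), and for $\mu$-a.e.\ $\om$ take the sequence $(n_\ell)$ of Lemma \ref{lem:sequence}, so that $N^+(\theta^{n_\ell}\om)\le m$, the gaps satisfy $n_{\ell+1}-n_\ell\ge m$, and $\limsup_\ell \ell/n_\ell\ge\rho:=\mu(\{N^+\le m\})/m$. By Proposition \ref{cor:A-syn}, $P^0(m,\theta^{n_\ell}\om)e_j$ is independent of $j$, so $P^0(m,\theta^{n_\ell}\om)$ kills every $e_i-e_j$ and hence vanishes on $\Sigma_0=\spann\{e_i-e_j\}$. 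Combining this with the perturbation bound $|P_{\cX^\var}(m,\theta^{n_\ell}\om)-P^0(m,\theta^{n_\ell}\om)|\le m\var$ from Proposition \ref{lem:P-M} gives the window estimate $\|P_{\cX^\var}(m,\theta^{n_\ell}\om)|_{\Sigma_0}\|\le m\var$. Outside the windows I only use the crude bound $\|P_{\cX^\var}(n,\om)|_{\Sigma_0}\|\le|P_{\cX^\var}(n,\om)|\le1$, valid because each one-step factor lies in $\cM_1^+$ and thus has norm $1$ (Proposition \ref{prop202}).

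The estimate is then assembled by telescoping. As the windows $[n_\ell,n_\ell+m]$ are pairwise disjoint, the cocycle property factors $P_{\cX^\var}(n_L+m,\om)$ into the $L$ window factors, each of restricted norm $\le m\var$, interspersed with factors of restricted norm $\le1$; submultiplicativity on the invariant subspace $\Sigma_0$ then gives $\|P_{\cX^\var}(n_L+m,\om)|_{\Sigma_0}\|\le(m\var)^L$. Taking logarithms, dividing by $n_L+m$, and passing to a subsequence of indices along which $\ell/n_\ell\to c\ge\rho$ (which exists by the $\limsup$ bound), I obtain, using that the limit defining $\Lambda_\var(\om)$ exists $\mu$-a.e.\ by the Multiplicative Ergodic Theorem, $\Lambda_\var(\om)\le c\log(m\var)\le\rho\log(m\var)$, the last inequality because $\log(m\var)<0$ once $\var<1/m$. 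As noted above, this finishes both assertions. The main obstacle, and the reason the lemma is true at all given that Lyapunov exponents are generically discontinuous under perturbation, is precisely this finite-time \emph{annihilation} of $\Sigma_0$ by the unperturbed synchronized DRN: it converts the $O(m\var)$ perturbation error into a genuine contraction factor that can be made arbitrarily small, while Lemma \ref{lem:sequence} guarantees that such contraction windows recur with positive asymptotic frequency. Care must be taken that $m$ (equivalently $\rho$) is chosen once and for all independently of $\om$, so that ``sufficiently small $\var$'' means the $\om$-independent condition $\var<1/m$.
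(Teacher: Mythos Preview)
Your proof is correct and follows essentially the same approach as the paper's: split on $v\in\Sigma_0$ versus $v\notin\Sigma_0$, choose $m$ with $\mu(\{N^+\le m\})>0$, use the sequence $(n_\ell)$ from Lemma~\ref{lem:sequence} together with the annihilation $P^0(m,\theta^{n_\ell}\om)\Sigma_0=\{0\}$ and the perturbation bound of Proposition~\ref{lem:P-M} to obtain an $m\var$-contraction at each window, telescope via the cocycle property, and arrive at the bound $\lambda_{\cX^\var}(\om,v)\le \rho\log(m\var)$ with $\rho=\mu(\{N^+\le m\})/m$. The only cosmetic differences are that you work with the restricted operator norm $\|\,\cdot\,|_{\Sigma_0}\|$ rather than tracking an individual $v$, and you count $L$ windows up to time $n_L+m$ whereas the paper counts $\ell-1$ windows up to time $n_\ell$; the resulting estimates are identical.
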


\begin{proof}
By Propositions \ref{prop:LE-for-Mark-cocy} and \ref{prop:A0-syn-LE} all assertions are correct
in case $v\not \in \Sigma_0$, so henceforth assume $v\in
\Sigma_0$. Since $\cX^0$ is synchronized, pick $m\in \N$ with
$\mu(\{N^+ \le m\})>0$, and fix $\var < 1/m$. For $\mu$-a.e.\ $\om \in
\Om$, let $(n_{\ell})_{\ell \in \N}$ be as in Lemma
\ref{lem:sequence}. Then $P^0(m, \theta^{n_{\ell}}\om)v =0$, and since
$n_{\ell + 1} - n_{\ell} \ge m$,
\begin{equation}\label{Pv}
|P_{\cX^{\var}} (n_{\ell + 1} - n_{\ell}, \theta^{n_{\ell}} \om) v|
\le |P_{\cX^{\var}} (m, \theta^{n_{\ell}}\om)v| = \big|
\bigl( 
P_{\cX^{\var}} (m, \theta^{n_{\ell}}\om) - P^0(m,\theta^{n_{\ell}} \om)
\bigr) v
\big|  \le m \var |v| \, ,
\end{equation}
by Proposition \ref{lem:P-M}. Recall from Proposition \ref{prop201}
that $P_{\cX^{\var}} (n,\om) \Sigma_0 \subset \Sigma_0$ for all
$n$, so (\ref{Pv}) can be iterated,
\begin{align*}
|P_{\cX^{\var}} (n_{\ell}, \om) v| & = |P_{\cX^{\var}} (n_{\ell} -
n_{\ell -1}, \theta^{n_{\ell -1}}\om) P_{\cX^{\var}} (n_{\ell -1} -
n_{\ell -2}, \theta^{n_{\ell -2}}\om) \cdots P_{\cX^{\var}} (n_{2} -
n_{1}, \theta^{n_{1}}\om) P_{\cX^{\var}} ( n ,\om) v| \\
& \le (m\var)^{\ell -1} |v| \, ,
\end{align*}
which in turn yields, utilizing Lemma \ref{lem:sequence},
\begin{align*}
\lambda_{\cX^{\var}} (\om , v) & = \lim\nolimits_{n\to \infty}
\frac{\log |P_{\cX^{\var}} (n,\om)v|}{n} \le \liminf\nolimits_{\ell
  \to \infty} \frac{(\ell -1) \log (m\var) + \log |v|}{n_{\ell}} \\
& = \log (m\var) \limsup\nolimits_{\ell \to \infty} \frac{\ell
  -1}{n_{\ell}} \le \frac{\log (m\var) \mu (\{N^+ \le m\})}{m} < 0 \, .
\end{align*}
Thus (\ref{LE_negative}) holds, and letting $\var \to 0$ shows that $\lim_{\var \to 0}
\lambda_{\cX^{\var}} (\om, v) = -\infty$ whenever $v\in \Sigma_0$.
Proposition \ref{prop:A0-syn-LE} then concludes the proof.
\end{proof}

The scene is now set for a rather straightforward 

\begin{proof}[Proof of Theorem \ref{thm:uniq_inv_distri}]
To prove the existence of an invariant distribution, as well as (i) and (ii),
simply observe that Lemma \ref{thm:markov-pert-simple-LE} and
Proposition \ref{prop:simple-LE-to-syn} together imply that $\cX^{\var}$ converges in
distribution for all sufficiently small $\var > 0$. By Lemma
\ref{prop:syn_distr_equivalence}, there exists
a unique invariant distribution of $\cX^{\var}$, denoted $p_{\var}$, that is both
pull-back and forward attracting. 

To establish (iii), that is, the continuity of $\var \mapsto p_{\var}$ at
$\var = 0$, recall from Proposition \ref{cor:A-syn} that $P^0 (N^-(\om),
\theta^{-N^-(\om)} \om) v = e_{J (\om)}$ for $\mu$-a.e.\ $\om \in \Om$
and every $v\in \Sigma_1^+$, where $N^-$ denotes a pull-back
synchronization time. By the invariance of $p_{\var}$ and Proposition
\ref{lem:P-M},
\begin{align}\label{eq310}
|p_{\var} (\om) - e_{J(\om)}| & = |P_{\cX^{\var}} (N^-(\om),
\theta^{-N^-(\om)}\om) p_{\var} (\theta^{-N^-(\om)}\om) - e_{J(\om)}|
\nonumber \\
& = \big|
\bigl( P_{\cX^{\var}} (N^-(\om), \theta^{-N^-(\om)}\om) - P^0  (N^-(\om), \theta^{-N^-(\om)}\om) 
\bigr)
p_{\var} (\theta^{-N^-(\om)}\om)
\big| \nonumber \\
& \le N^-(\om) \var \, ,
\end{align}
and hence clearly $\lim_{\var\to 0} p_{\var} (\om) = e_{J(\om)}$ for
$\mu$-a.e.\ $\om \in \Om$.

Finally, if $\cX^0$ is uniformly synchronized then $N^-(\om)\le N$ for
$\mu$-a.e.\ $\om \in \Om$ and the appropriate $N\in \N$, so
(\ref{eq310}) shows that the convergence in (\ref{limit-distribution}) is uniform on a
set of full $\mu$-measure.
\end{proof}

Note that, informally put, Theorem \ref{thm:uniq_inv_distri} provides a description, at
the level of (invariant) distributions, of what synchronization looks
like for a sufficiently small Markov perturbation of a synchronized
DRN. In order to rephrase this description at the level of stochastic
trajectories (Theorem \ref{thm:non-unif_syn}), the following simple
linear algebra observation is helpful.

\begin{prop}\label{lem:vj}
Let $v\in \Sigma_1^+$. Then $v_j = 1 - |v - e_j|/2$ for every $j\in \K$.
\end{prop}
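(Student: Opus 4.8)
The plan is to compute the $\ell^1$-norm $|v - e_j|$ directly from its definition $|v - e_j| = \sum_{i\in\K} |v_i - (e_j)_i|$, and to exploit the two defining features of membership in $\Sigma_1^+$---namely $v_i \ge 0$ for all $i\in\K$ and $\sum_{i\in\K} v_i = 1$---in order to discharge every absolute value and reduce the identity to elementary arithmetic.

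Concretely, I would first isolate the index $i = j$ from the remaining indices $i\ne j$. For $i = j$ the summand equals $|v_j - 1|$; since all coordinates of $v$ are nonnegative and sum to $1$, one has $v_j \le 1$, so this term resolves to $1 - v_j$. For each $i\ne j$ the $i$-th entry of $e_j$ vanishes, and nonnegativity gives $|v_i| = v_i$; summing these over $i\ne j$ and invoking $\sum_{i\in\K} v_i = 1$ yields $\sum_{i\ne j} v_i = 1 - v_j$. Adding the two contributions gives $|v - e_j| = (1 - v_j) + (1 - v_j) = 2(1 - v_j)$, whence dividing by $2$ and rearranging produces the claimed identity $v_j = 1 - |v - e_j|/2$.

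There is essentially no obstacle here, so in practice I expect the write-up to be a single short computation. The only point demanding even minimal care is the sign in the $i = j$ term: this is precisely where the hypothesis $v\in\Sigma_1^+$ is used, since nonnegativity together with the normalization $\sum_{i\in\K} v_i = 1$ guarantees $v_j \le 1$ and hence $|v_j - 1| = 1 - v_j$ rather than the opposite sign. Everything else is routine bookkeeping across the $k$ coordinates.
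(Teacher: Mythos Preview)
Your argument is correct. The paper actually states this proposition without proof, treating it as a routine linear algebra observation, so there is nothing to compare against; your direct computation of $|v-e_j|$ by splitting off the $j$-th coordinate and using $v_j\le 1$ together with $\sum_{i\ne j}v_i=1-v_j$ is exactly the intended one-line verification.
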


\begin{proof}[Proof of Theorem  \ref{thm:non-unif_syn}]
Fix $\var_0>0$ so small that all conclusions in Theorem \ref{thm:uniq_inv_distri} hold
whenever $0\le \var < \var_0$. In particular, therefore, given
$\delta>0$, for $\mu$-a.e.\ $\om \in \Om$ there exists $N(\om)\in \N$
and $f(\om , \delta)>0$ such that
$$
|p_{\cX^{\var}} (n,\om) - p_{\var} (\theta^n \om)| < \frac{\delta}{2}
\enspace \forall n \ge N(\om) \quad \mbox{and} \quad
|p_{\var} (\om) - e_{J(\om)}| < \frac{\delta}{2} \enspace \forall \var
< f(\om , \delta) \, .
$$
Pick $0< \var_{\delta} \le \var_0$ so small that $\mu (\{ f(\, \cdot
\, , \delta) \ge \var_{\delta}\})> 1 - \delta$. Letting $\Om_{\delta}
= \{ f(\, \cdot \, , \delta)\ge \var_{\delta}\}$ and $E_{\delta} (\om)
= \{n\ge N(\om): \theta^n \om \in \Om_{\delta}\}$ for convenience, by
the Birkhoff ergodic theorem,
$$
\lim\nolimits_{n\to \infty} \frac{\# (  E_{\delta}(\om) \cap \{1, \ldots , n\} )}{n} = \mu (\Om_{\delta}) > 1 - \delta \, ,
$$
which proves (i). Furthermore, if $0<\var< \var_{\delta}$ and $n\in
E_{\delta} (\om)$ then $|p_{\cX^{\var}} (n,\om) - e_{J(\theta^n
  \om)}|<\delta $ by the triangle inequality, and
\begin{align*}
 {\P}\big\{X_{n}  =(s_{J(\theta^{n}\om)},\theta^{n}\om)|X_0\in
 S\times\{\om\}\big\}& = p_{\cX^{\var}}(n,\om)_{J(\theta^n \om)}
 = 1 - \frac{|p_{\cX^{\var}} (n,\om) - e_{J(\theta^n \om)}|}{2} \\
&  > 1- \frac{\delta}{2}
\, ,
\end{align*}
where the second equality is due to Proposition \ref{lem:vj}. Thus, if
$\cX, \cY$ are two independent copies of $\cX^{\var}$, with
$0<\var<\var_{\delta}$, then for all $n\in E_{\delta}(\om)$,
\begin{align*}
{\P}\big\{X_{n}=Y_{n}|X_0,Y_0\in S\times\{\om\}\big\} & \ge
{\P}\big\{X_{n}=Y_{n} = (s_{J(\theta^n \om)}, \theta^n \om)
|X_0,Y_0\in S\times\{\om\}\big\} > \left(1-\dfrac{\de}{2}\right)^2\\
& >1-\de \,  ,
\end{align*}
which proves (ii).

Finally, if $\cX^0$ is uniformly synchronized then, by Theorem
\ref{thm:uniq_inv_distri} it can be assumed that $f(\om,\de)$ is
independent of $\om$. But then $\mu (\Om_{\de})=1$, indeed $\mu \left(
  \bigcap_{n\ge 0} \theta^{-n} \Om_{\de}\right)=1$, and so for
$\mu$-a.e.\ $\om \in \Om$ the set $E_{\de} (\om) = \{n \ge N(\om)\}$
is co-finite.
\end{proof}

\section{Invariant distributions for smooth Markov perturbations}\label{sec4}

This section establishes an asymptotic expansion for the invariant
distribution of a $C^m$-Markov perturbation
$\{\cX^{\var}\}$, with $m\ge 1$, of a synchronized DRN $\cX^0$. Remember that due to
the ``deterministic'' nature of $\cX^0$, at all times each state leads to precisely one subsequent state. By contrast, due to
the presence of intrinsic noise, states of $\cX^{\var}$ typically have
a small but positive probability of leading to more than one
subsequent state. As may be expected, these deviations can be described in
terms of the derivatives of $\var \mapsto P_{\cX^{\var}}$ at $\var =
0$. To do this explicitly, for every $0\le \ell \le m$ let
$$
P_{\cX^{\var}}^{(\ell)} (n,\om) = \left. \frac{{\rm d}^{\ell}}{{\rm d}
  \var^{\ell}} P_{\cX^{\var}} (n,\om) \right|_{\var = 0} \, ;
$$
note that $P_{\cX^{\var}}^{(0)} (n,\om )= P^0(n,\om) \in \cM_{1, {\rm
    det}}^+$, and clearly $P_{\cX^{\var}}^{(\ell)}$ is $2^{\N_0}
\otimes \cF$-measurable, with $P_{\cX^{\var}}^{(\ell)} (n,\om)\in
\cM_0$ for all $1\le \ell \le m$, $n\in \N_0$, and $\mu$-a.e.\ $\om
\in \Om$. Moreover, for every $n\in \N_0$ and $\mu$-a.e.\ $\om \in
\Om$,
$$
\lim\nolimits_{\var \to 0}  \frac{P_{\cX^{\var}} (n,\om) - P^0(n,\om)
  - \sum_{\ell = 1}^m \var^{\ell} P_{\cX^{\var}} ^{(\ell)} (n,\om)/\ell !}{\var^m} = 0 \, .
$$
Throughout this section, assume that the DRN $\cX^0$ is synchronized. By
Proposition \ref{cor:A-syn}, $\cX^0$ admits an (essentially unique)
synchronization index $J$ as well as a pull-back synchronization time
$N^-$. The following simple observation will be used several times
below.

\begin{prop}\label{prop4n1}
Let $\cX^0$ be a synchronized DRN, and $N^-$ a pull-back
synchronization time. Then, for $\mu$-a.e.\ $\om \in \Om$,
$$
P^0(N^-(\om), \theta^{-N^-(\om)} \om) v = 0 \quad \forall v \in
\Sigma_0 \, .
$$
\end{prop}

Utilizing $J$ and $N^-$, as well as $P_{\cX^{\var}}^{(\ell)}$, define $\cF$-measurable functions $q^{(0)},
q^{(1)}, \ldots , q^{(m)}:\Om \to \R^k$ as
$$
q^{(0)} (\om) = e_{J(\om)} \, ,
$$
and inductively for $1\le \ell \le m$,
\begin{equation}\label{eq4n0}
q^{(\ell)} (\om) = \sum\nolimits_{i=0}^{\ell-1} \left( \!\!
\begin{array}{c} \ell \\ i \end{array}\!\!
\right) P_{\cX^{\var}}^{(\ell - i)} (N^-(\om), \theta^{-N^-(\om)}\om)
q^{(i)} (\theta^{-N^-(\om)}\om) \, .
\end{equation}
Note that $q^{(0)}\in \Sigma_1^+$, whereas clearly $q^{(\ell)}(\om)
\in \Sigma_0$ for all $1\le \ell \le m$ and $\mu$-a.e.\ $\om \in \Om$;
in particular,
$$
q^{(1)}(\om) = P_{\cX^{\var}} ^{(1)} (N^-(\om), \theta^{-N^-(\om)}\om)
e_{J(\theta^{-N^-(\om)}\om)} \, .
$$
The main result of this section, Lemma \ref{lem4n2} below, provides a
Taylor formula approximately expressing the invariant distribution of
$\cX^{\var}$ in terms of the quantities $q^{(\ell)}$, $1\le \ell\le
m$. As the attentive reader will have noticed, by definition these
quantities depend on the choice of $N^-$. However, it is readily
seen that choosing a different $N^-$ yields the same $q^{(1)},
\ldots, q^{(m)}$ for $\mu$-a.e.\ $\om \in \Om$.

\begin{lemma}\label{lem4n2}
Let $\{\cX^{\var}\}$ be a $C^m$-Markov perturbation $(m\ge 1)$ of a
synchronized DRN $\cX^0$. For every
sufficiently small $\var\ge 0$ let $p_{\var}$ be the invariant distribution
of $\cX^{\var}$, as in Theorem \ref{thm:uniq_inv_distri}. Then, for $\mu$-a.e.\ $\om \in
\Om$,
\begin{equation}\label{eq4n3}
\lim\nolimits_{\var \to 0}  \frac{p_{\var} (\om) - e_{J(\om)} -
  \sum_{\ell = 1}^m \var^{\ell} q^{(\ell)} (\om)/\ell
  !}{\var^m} = 0 \, .
\end{equation}
\end{lemma}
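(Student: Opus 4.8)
The plan is to prove the Taylor expansion \eqref{eq4n3} by exploiting the invariance identity for $p_{\var}$ at the pull-back synchronization time $N^-$, and then matching Taylor coefficients in $\var$ order by order. Concretely, for $\mu$-a.e.\ $\om \in \Om$ the invariance of $p_{\var}$ gives
$$
p_{\var} (\om) = P_{\cX^{\var}} (N^-(\om), \theta^{-N^-(\om)}\om)\, p_{\var} (\theta^{-N^-(\om)}\om) \, ,
$$
exactly as in \eqref{eq310}. The right-hand side is a product of two quantities, each of which I expect to admit a $C^m$-Taylor expansion in $\var$: the cocycle factor $P_{\cX^{\var}}(N^-(\om),\theta^{-N^-(\om)}\om)$ has derivatives $P_{\cX^{\var}}^{(\ell)}(N^-(\om),\theta^{-N^-(\om)}\om)$ by the $C^m$-assumption (and the chain/Leibniz rule applied to the factorization of the cocycle into one-step blocks in Proposition \ref{prop210}(ii)), while the second factor $p_{\var}(\theta^{-N^-(\om)}\om)$ is the invariant distribution evaluated at a shifted point.

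First I would set up an induction on $\ell$ showing that $p_{\var}(\om)$ possesses a $C^m$-expansion $p_{\var}(\om) = \sum_{\ell=0}^m \var^\ell q^{(\ell)}(\om)/\ell! + o(\var^m)$ with the coefficients $q^{(\ell)}$ defined by \eqref{eq4n0}. The base case $\ell=0$ is precisely the continuity statement $p_0(\om)=e_{J(\om)}=q^{(0)}(\om)$ from Theorem \ref{thm:uniq_inv_distri}(iii). For the inductive step I would substitute the expansions of both factors into the invariance identity and apply the Leibniz rule to the product: the coefficient of $\var^\ell$ on the right-hand side is
$$
\sum_{i=0}^{\ell} \binom{\ell}{i} P_{\cX^{\var}}^{(\ell-i)}(N^-(\om),\theta^{-N^-(\om)}\om)\, q^{(i)}(\theta^{-N^-(\om)}\om) \, ,
$$
and I would isolate the $i=\ell$ term, which is $P^0(N^-(\om),\theta^{-N^-(\om)}\om)\, q^{(\ell)}(\theta^{-N^-(\om)}\om)$. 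The crucial simplification is Proposition \ref{prop4n1}: since $q^{(\ell)}(\theta^{-N^-(\om)}\om)\in\Sigma_0$ for $\ell\ge 1$, the operator $P^0(N^-(\om),\theta^{-N^-(\om)}\om)$ annihilates it, so this term vanishes. The surviving sum runs over $0\le i\le \ell-1$ and is exactly the definition of $q^{(\ell)}(\om)$ in \eqref{eq4n0}, closing the induction and identifying the coefficients.

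The main obstacle is establishing that $p_{\var}$ genuinely is $C^m$ in $\var$, rather than merely continuous as delivered by Theorem \ref{thm:uniq_inv_distri}. The vanishing argument above identifies the coefficients \emph{assuming} such an expansion exists, but differentiability itself must be justified. I would extract it from the same contraction mechanism: because $P^0(N^-(\om),\theta^{-N^-(\om)}\om)$ maps all of $\Sigma_1^+$ to the single point $e_{J(\om)}$ and in particular kills $\Sigma_0$, the invariance identity acts as a contraction on the relevant difference quotients, so the finite-order remainder terms propagate stably through the $\var$-expansion. Care is needed because the synchronization time $N^-(\om)$ depends measurably on $\om$ and need not be bounded, so I would control the $\om$-dependence by working pointwise for $\mu$-a.e.\ $\om$ and using that $N^-(\om)<\infty$ almost surely; the remainder estimate $o(\var^m)$ is then inherited from the uniform $C^m$-control of the one-step map $\var\mapsto P_{\cX^{\var}}(1,\cdot)$ combined with the finite product expansion of the cocycle over the $N^-(\om)$ steps.
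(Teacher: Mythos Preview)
Your proposal is correct and follows essentially the same route as the paper: induction on the order of the expansion, using the invariance identity $p_{\var}(\om)=P_{\cX^{\var}}(N^-(\om),\theta^{-N^-(\om)}\om)\,p_{\var}(\theta^{-N^-(\om)}\om)$, the Leibniz-type product of the two Taylor expansions, and Proposition~\ref{prop4n1} to kill the top-order term. One small clarification: your ``main obstacle'' is slighter than you fear, since the lemma asserts only the asymptotic expansion, not classical $C^m$-smoothness of $\var\mapsto p_{\var}(\om)$; the induction hypothesis at step $k$ already provides $p_{\var}(\om^-)=\sum_{\ell=0}^k \var^{\ell}q^{(\ell)}(\om^-)/\ell!+o(\var^k)$, and the paper gains one order simply by rewriting $p_{\var}(\om)-e_{J(\om)}=\bigl(P_{\cX^{\var}}-P^0\bigr)(N^-(\om),\om^-)\,p_{\var}(\om^-)$, where the first factor is $O(\var)$ with a $C^m$-expansion from the finite product of one-step maps, so no separate differentiability argument is needed.
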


\begin{proof}
With $N^-$ denoting any pull-back synchronization time of $\cX^0$,
write $\om^- := \theta^{-N^-(\om)}\om$ for convenience. To establish
(\ref{eq4n3}), it will be shown that for every $0\le k \le m$ and
$\mu$-a.e.\ $\om \in \Om$,
\begin{equation}\label{eq4n4}
\lim\nolimits_{\var \to 0}  \frac{p_{\var} (\om) - e_{J(\om)} -
  \sum_{\ell = 1}^k \var^{\ell} q^{(\ell)} (\om)/\ell
  !}{\var^k} = 0 \, .
\end{equation}
By Theorem \ref{thm:uniq_inv_distri}(ii), clearly (\ref{eq4n4}) is
correct for $k=0$, with an ``empty'' sum understood to equal $0$, as
usual. Assume that (\ref{eq4n4}) is correct for some $0\le k < m$. Then
$$
P_{\cX^{\var}}(n,\om) = P^0(n,\om) + \sum\nolimits_{\ell = 1}^k
\frac{\var^{\ell}}{\ell ! } P_{\cX^{\var}}^{(\ell)} (n,\om) +
\frac{\var^{k+1}}{(k+1)!} P_{\cX^{\var}}^{(k+1)} (n,\om) +\var^{k+1}
R_{\var}(n,\om) \, ,
$$
with the appropriate $R_{\var} (n,\om)\in \cM_0$ and $\lim_{\var \to
  0} R_{\var} (n,\om) = 0$, as well as
$$
p_{\var} (\om) = e_{J(\om)} + \sum\nolimits_{\ell = 1}^k
\frac{\var^{\ell}}{\ell !} q^{(\ell)} (\om) + \var^k r_{\var} (\om)
\, ,
$$
with the appropriate $r_{\var}(\om)\in \Sigma_0$ and $\lim_{\var \to
0} r_{\var} (\om) = 0$. Thus, by the invariance of $p_{\var}$ and
$e_J$, Proposition \ref{prop4n1}, and the definition of $q^{(\ell)}$,
\begin{align*}
p_{\var} (\om) - e_{J(\om)} & = P_{\cX^{\var}} (N^-(\om), \om^-)
p_{\var} (\om^-) - e_{J(\om)} \\
& = \bigl( P_{\cX^{\var}} (N^-(\om), \om^-) - P^0
(N^-(\om), \om^-)\bigr) p_{\var} (\om^-) \\
& = \left( 
 \sum\nolimits_{\ell = 1}^k
\frac{\var^{\ell}}{\ell ! } P_{\cX^{\var}}^{(\ell)} (N^-(\om),\om^-) +
\frac{\var^{k+1}}{(k+1)!} P_{\cX^{\var}}^{(k+1)} (N^-(\om),\om^-) +\var^{k+1}
R_{\var}(N^-(\om),\om^-) 
\right) \\
& \qquad 
\cdot \left(
e_{J(\om^-)} + \sum\nolimits_{\ell = 1}^k
\frac{\var^{\ell}}{\ell !} q^{(\ell)} (\om^-) + \var^k r_{\var} (\om^-)
\right) \\
& = \sum\nolimits_{\ell = 1}^k \sum\nolimits_{i=0}^{\ell -1}
\frac{\var^{\ell}}{i! (\ell -i)!}  P_{\cX^{\var}}^{(\ell - i)}
(N^-(\om), \om^-)q^{(i)} (\om^-) \\
& \qquad + \var^{k+1} \sum\nolimits_{i=0}^k
\frac1{i! (k+1 - i)!} P_{\cX^{\var}}^{(k+1 - i)} (N^-(\om), \om^-)
q^{(i)}(\om^-) + \var^{k+1} R_{\var}'(n,\om) \\
& = \sum\nolimits_{\ell =1}^{k+1} \frac{\var^{\ell}}{\ell !}
q^{(\ell)}(\om) + \var^{k+1} R_{\var}'(n,\om) \, ,
\end{align*}
with the appropriate $R_{\var}'(n,\om)\in \cM_0$ and $\lim_{\var\to 0}
R_{\var}'(n,\om) = 0$. Thus (\ref{eq4n4}) is correct with $k$ replaced
by $k+1$, and induction establishes (\ref{eq4n3}).
\end{proof}

\begin{remark}
For a homogeneous Markov chain, a series expansion (or updating
formula) for invariant distributions under (regular as well as singular) perturbations has been
derived in \cite{Altman}. In this context, the transition cocycle
$P_{\cX^{\var}}$ may be viewed as a ``random"
version of a Markov chain, with the Markov perturbation  as a regular
perturbation. Accordingly, Lemma \ref{lem4n2} may be viewed as
a random version of the series expansion in \cite{Altman} under regular perturbations.
\end{remark}

The remainder of this section develops a simple condition guaranteeing
that the invariant distribution $p_{\var}$ of a Markov perturbation
$\{\cX^{\var}\}$ is degenerate at $\var =0$ in that $q^{(1)}=0$ in
(\ref{eq4n3}). In this, a crucial role is played by the quantity
$$
d(\om):= P_{\cX^{\var}}^{(1)} (1,\om) e_{J(\om)} \, ,
$$
which may be thought of as the first-order (one-step) {\em probability
dissipation\/} under the Markov perturbation. Notice that $d$ is
$\cF$-measurable, and $d(\om)\in \Sigma_0$ for $\mu$-a.e.\ $\om\in
\Om$. Moreover, except for the $J(\theta \om)$-th component, all
components of $d(\om)$ are non-negative, that is, $d(\om)_j\ge 0$ for
every $j\in \K \setminus \{J(\theta \om)\}$, and hence $d(\om)_{J(\theta
  \om)}\le 0$. From the cocycle properties of $P_{\cX^{\var}}$ and
$P^0$, it is readily deduced that for all $n\in \N$ and $\mu$-a.e.\ $\om\in \Om$,
\begin{equation}\label{eq4n4n}
P_{\cX^{\var}}^{(1)}(n,\om) = \sum\nolimits_{\ell =0}^{n-1}
P^0(n-1-\ell ,
\theta^{\ell +1}\om) P_{\cX^{\var}}^{(1)} (1,\theta^{\ell}\om)
P^0(\ell ,\om) \, ,
\end{equation}
and consequently $q^{(1)}$ can be written neatly in terms of $d$,
thus revealing its pull-back nature,
\begin{align}\label{eq4n5}
q^{(1)}(\om) & = \sum\nolimits_{\ell =0}^{N^-(\om) -1} P^0(N^-(\om) - 1 - \ell,
\theta^{1+\ell- N^-(\om)} \om) d(\theta^{\ell - N^-(\om)} \om )
\nonumber \\
& = \sum\nolimits_{\ell = 0}^{N^-(\om) -1} P^0(\ell, \theta^{-\ell} \om)
d(\theta^{-\ell - 1} \om)\, .
\end{align}
In order to trace the dissipation of probability, for $\mu$-a.e.\ $\om
\in \Om$ consider the subset $S_{\bullet}$ of $S$ given by
$$
S_{\bullet} (\om ) = \{ s_j \in S : P^0(1,\om) e_j = e_{J(\theta \om )}\}
\, .
$$
Thus $S_{\bullet }(\om)$ contains precisely those states that lead to the
synchronized state $s_J$ in a single step. By invariance, $s_{J(\om)} \in
S_{\bullet} (\om)$. Also, consider the subset of $\Om$ given by
$$
\Om_{\bullet} = \{ \om \in \Om : d(\om)_j = 0 \enspace \forall s_j
\not \in S_{\bullet} (\theta \om )\} \, \in \, \cF \, .
$$
Thus, for $\om \in \Om_{\bullet}$ first-order probability
dissipation occurs only to states that immediately lead to the
synchronized state; see also Figure \ref{fig:prob_diss}. Intuitively, it is
plausible that if first-order probability dissipation is thus
restricted for all times up to the (finite) synchronization time of
$\cX^0$ then $p_{\var}$ differs from the single-state invariant
distribution $e_J$ of $\cX^0$ only by higher orders of
$\var$, that is, $q^{(1)}=0$.

\begin{lemma}\label{lem4n4}
Let $\{\cX^{\var}\}$ be a $C^m$-Markov perturbation $(m\ge 1)$ of a
synchronized DRN $\cX^0$, with pull-back
synchronization time $N^-$. Then, for $\mu$-a.e.\ $\om \in \Om$, the
following hold:
\begin{enumerate}
\item If $d(\theta^{-1}\om)\ne 0$ then $q^{(1)}(\om) \ne 0$;
\item If $d(\theta^{-1}\om)= 0$ and $\om \in \bigcap_{\ell
    =1}^{N^-({\om}) -1} \theta^{\ell + 1} \Om_{\bullet}$ then $q^{(1)}
  (\om)=0$.
\end{enumerate}
\end{lemma}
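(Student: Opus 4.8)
The plan is to read off both statements directly from the pull-back formula \eqref{eq4n5} for $q^{(1)}$, namely $q^{(1)}(\om) = \sum_{\ell=0}^{N^-(\om)-1} P^0(\ell,\theta^{-\ell}\om)\, d(\theta^{-\ell-1}\om)$, by tracking the sign pattern of each summand. The two structural facts I would use throughout are: (a) each $d(\theta^{-\ell-1}\om)$ lies in $\Sigma_0$ and, by the sign properties of $d$ recorded above, has all components non-negative except at the single index $J(\theta^{-\ell}\om)$, where it is $\le 0$; and (b) by the invariance in Proposition \ref{cor:A-syn}, $P^0(\ell,\theta^{-\ell}\om) e_{J(\theta^{-\ell}\om)} = e_{J(\om)}$, while $P^0(\ell,\theta^{-\ell}\om)$ maps every canonical basis vector to a canonical basis vector by Proposition \ref{prop202}.

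For (i), the point is that no cancellation can occur off the index $J(\om)$. Combining (a) and (b), in each summand $v_\ell := P^0(\ell,\theta^{-\ell}\om) d(\theta^{-\ell-1}\om)$ the unique possibly-negative entry of $d(\theta^{-\ell-1}\om)$ is routed precisely to the coordinate $J(\om)$, whereas every non-negative entry is routed to some coordinate with a non-negative coefficient; hence $(v_\ell)_j \ge 0$ for all $j \ne J(\om)$. Summing, $q^{(1)}(\om)_j \ge (v_0)_j = d(\theta^{-1}\om)_j$ for every $j \ne J(\om)$, since $v_0 = d(\theta^{-1}\om)$ and the remaining terms are non-negative at such $j$. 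Now if $d(\theta^{-1}\om) \ne 0$, then --- as $d(\theta^{-1}\om) \in \Sigma_0$ with its only negative entry at $J(\om)$ --- some coordinate $j_0 \ne J(\om)$ must satisfy $d(\theta^{-1}\om)_{j_0} > 0$, whence $q^{(1)}(\om)_{j_0} > 0$ and in particular $q^{(1)}(\om) \ne 0$.

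For (ii), I would first unwind the hypothesis: $\om \in \theta^{\ell+1}\Om_{\bullet}$ means $\theta^{-\ell-1}\om \in \Om_{\bullet}$, i.e.\ $d(\theta^{-\ell-1}\om)$ is supported on those $j$ for which $s_j \in S_{\bullet}(\theta^{-\ell}\om)$, equivalently $P^0(1,\theta^{-\ell}\om) e_j = e_{J(\theta^{1-\ell}\om)}$. Applying a single step of the deterministic cocycle therefore collapses this zero-sum vector: $P^0(1,\theta^{-\ell}\om) d(\theta^{-\ell-1}\om) = \bigl(\sum_j d(\theta^{-\ell-1}\om)_j\bigr) e_{J(\theta^{1-\ell}\om)} = 0$, using $d(\theta^{-\ell-1}\om) \in \Sigma_0$. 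Factoring $P^0(\ell,\theta^{-\ell}\om) = P^0(\ell-1,\theta^{1-\ell}\om) P^0(1,\theta^{-\ell}\om)$ via the cocycle property then annihilates every summand with $1 \le \ell \le N^-(\om)-1$, while the $\ell=0$ term equals $d(\theta^{-1}\om) = 0$ by hypothesis. Hence $q^{(1)}(\om) = 0$.

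Neither part requires heavy machinery: the expansion \eqref{eq4n5} together with the invariance and cocycle identities of $P^0$ does all the work. The only delicate point is the index bookkeeping --- keeping straight which coordinate the negative entry of each pulled-back $d$-term lands on, and correctly composing the shifts $\theta^{-\ell}$ in the cocycle factorization. I expect this to be the main (though purely routine) obstacle; once the invariance $P^0(\ell,\theta^{-\ell}\om) e_{J(\theta^{-\ell}\om)} = e_{J(\om)}$ is in place, both the cancellation-free argument in (i) and the collapse argument in (ii) follow cleanly.
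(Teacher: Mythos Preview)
Your proposal is correct and follows essentially the same approach as the paper's proof: both arguments use the pull-back formula \eqref{eq4n5} together with the sign pattern of each summand (non-negative off the index $J(\om)$) to rule out cancellation for (i), and then for (ii) show that the $\Om_{\bullet}$-hypothesis forces $P^0(1,\theta^{-\ell}\om)d(\theta^{-\ell-1}\om)=0$ for each $\ell\ge 1$, which via the cocycle property kills every summand. Your collapse argument in (ii) --- reading off directly that the image is $\bigl(\sum_j d(\theta^{-\ell-1}\om)_j\bigr)e_{J(\theta^{1-\ell}\om)}=0$ --- is in fact a slightly cleaner packaging of the paper's coordinate computation, but the underlying idea is identical.
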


\begin{proof}
Notice first that each of the $N^-(\om)$ terms in (\ref{eq4n5}) lies in
$\Sigma_0$, with all components non-negative, except possibly for the
$J(\omega)$-th component. Thus $q^{(1)}=0$ if and only if $P^0(\ell ,
\theta^{-\ell} \om) d(\theta^{-\ell -1}\om)=0$ for all $\ell = 0, 1,
\ldots, N^-(\om)-1$. With this, on the one hand, if
$d(\theta^{-1}\om)\ne 0$ then $q^{(1)} (\om)\ne 0$, which proves (i). On the
other hand, if $\om \in \theta^{\ell + 1} \Om_{\bullet}$ for some
$1\le \ell \le N^-(\om)-1$ then $P^0(1, \theta^{-\ell}\om)
d(\theta^{-\ell -1} \om)$ lies in $\Sigma_0$, with all components
non-negative, except for the $J(\theta^{1-\ell} \om)$-th component,
for which
\begin{align*}
P^0(1, \theta^{-\ell} \om) d(\theta^{-\ell -1}\om)_{J(\theta^{1-\ell}
  \om)} & = \sum\nolimits_{j=1}^k  P^0(1, \theta^{-\ell}
\om)_{J(\theta^{1-\ell}\om), j} d(\theta^{-\ell - 1} \om)_j \\
& = \sum\nolimits_{j\in S_{\bullet}(\theta^{-\ell} \om)}  P^0(1, \theta^{-\ell}
\om)_{J(\theta^{1-\ell}\om), j} d(\theta^{-\ell - 1} \om)_j \\
& = \sum\nolimits_{j\in S_{\bullet}(\theta^{-\ell} \om)}
d(\theta^{-\ell - 1} \om)_j   =  \sum\nolimits_{j=1}^k
d(\theta^{-\ell - 1} \om)_j  = 0 \, ,
\end{align*}
where the second and fourth equality are due to $\om \in \theta^{\ell + 1} \Om_{\bullet}$. Thus $P^0(1,
\theta^{-\ell} \om) d(\theta^{-\ell -1}\om)=0$, and by the cocycle
property $P^0(\ell, \theta^{-\ell} \om) d(\theta^{-\ell -1}\om)=0$ as
well, that is, $q^{(1)}(\om)=0$, which proves (ii).
\end{proof}

\begin{remark}
In a spirit similar to Lemma \ref{lem4n4}(ii), one might derive conditions for
higher-order degeneracy of $p_{\var}$ at $\var =0$, that is, for
$q^{(1)} = \ldots = q^{(\ell)} = 0$ for some $2\le \ell \le
m$. However, since the pertinent analogues of (\ref{eq4n0}) and (\ref{eq4n5}) are
considerably more cumbersome in this case, such conditions likely are
of limited practical use.
\end{remark}

\begin{figure}[ht] 
\includegraphics[height=7.4cm]{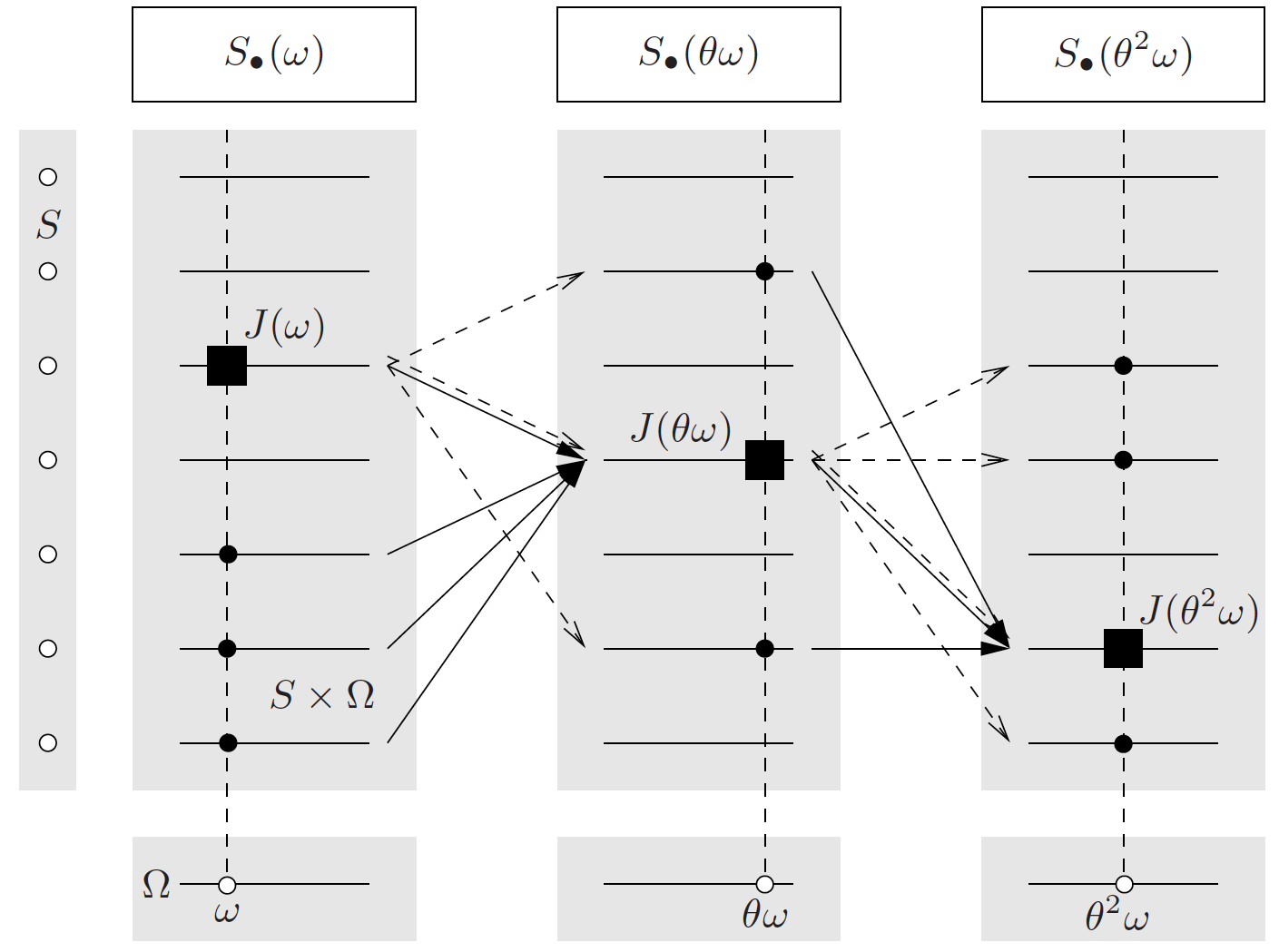}
\caption{For $\om \in \Om_{\bullet}$, first-order probability
  dissipation (dashed arrows) under a smooth Markov perturbation occurs only to states that immediately
  lead to the synchronized state (black squares) of the synchronized DRN
  (solid arrows).}\label{fig:prob_diss}
\end{figure}

\section{Alternating patterns of synchronization and desynchronization}\label{sec5}

This section is  devoted to the proof of Theorem \ref{thm:desyn}. As
with Theorem \ref{thm:non-unif_syn}, the main idea is to link the
behaviour of the invariant distribution $p_{\var}$ as $\var \to 0$ to
synchronization at the level of trajectories. Utilizing the asymptotic
expansion (\ref{eq4n3}) enables a refinement of Theorem
\ref{thm:non-unif_syn}, via quantitative descriptions of
high-probability synchronization as well as low-probability
desynchronization. For the sake of clarity, these two scenarios are
first addressed in two separate lemmas; a combination of both results
then yields Theorem \ref{thm:desyn}. Throughout, assume that $\{\cX^{\var}\}$ is a $C^m$-Markov
perturbation $(m\ge 1)$ of a syn\-chro\-nized DRN $\cX^0$. For
convenience, for every $1\le \ell \le m$ let
\begin{eqnarray}\label{def:Om_ell}
\Om^{(\ell)} = \{q^{(1)}=0\} \cap \ldots \cap \{q^{(\ell)} = 0\} \in
\cF \, ;
\end{eqnarray}
note that $\Om^{(0)}:= \Om \supset \Om^{(1)} \supset \ldots \supset
\Om^{(m)} = \Om_{\rm deg}$, with $\Om_{\rm deg}$ considered already in (\ref{eq1n8a}).

\begin{lemma}\label{thm:large-Syn}
Assume that $\mu \bigl( \Om^{(\ell)} \bigr)>0$ for some $1\le \ell \le m$. Then,
for every $0<\delta<\mu \bigl( \Om^{(\ell)} \bigr)$ there exist
$\var_\de> 0$, $b_{\de} :[0, \var_{\de})\to \R$ with $b_{\de}
(\var)\ge 0$ and $\lim_{\var\to 0}b_\de(\var)=0$,
and $E_\delta:\Om\to2^{\N}$ such that for $\mu$-a.e.\ $\om\in\Om$,
\smallskip
\begin{enumerate}
\item 
$\liminf\nolimits_{n\to\infty}\dfrac{\#(E_{\delta}(\om)\cap
  \{1,...,n\} )}{n}>\mu(\Om^{(\ell)})-\de$;\\[-1mm]
\item for any two independent copies $\cX,$ $\cY$ of $\cX^\var$ with $0\le\var<\var_\de$,
$$
 {\P}\big\{ X_{n}=Y_{n}|X_0,Y_0\in S\times\{\om\}\big\}\ge
 1-\var^{\ell}b_\de(\var)\quad\forall n\in E_\de(\om)\, .
$$
\end{enumerate}
\end{lemma}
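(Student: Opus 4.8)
The plan is to transfer the $\var$-expansion of $p_\var$ on $\Om^{(\ell)}$ furnished by Lemma \ref{lem4n2} down to the level of stochastic trajectories, refining the argument in the proof of Theorem \ref{thm:non-unif_syn} so as to track the exact order $\var^\ell$. First I would reduce the probabilistic assertion (ii) to a deterministic cocycle estimate. If $\cX,\cY$ are independent copies of $\cX^\var$ started in the fibre $S\times\{\om\}$ with (arbitrary) initial laws $a,b\in\Sigma_1^+$, then conditionally on the extrinsic noise they are independent, so with $J:=J(\theta^n\om)$,
\[
\P\big\{X_n=Y_n\,|\,X_0,Y_0\in S\times\{\om\}\big\}\ge \big(P_{\cX^\var}(n,\om)a\big)_{J}\big(P_{\cX^\var}(n,\om)b\big)_{J},
\]
and by Proposition \ref{lem:vj} each factor equals $1-\tfrac12|P_{\cX^\var}(n,\om)a-e_J|$ (resp.\ with $b$). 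Since $(1-\tfrac12 x)(1-\tfrac12 y)\ge 1-\tfrac12(x+y)$ for $x,y\ge0$, it suffices to produce, for $n$ in a suitable set, a bound $|P_{\cX^\var}(n,\om)a-e_{J(\theta^n\om)}|\le \var^\ell b_\de(\var)$ that is uniform over all $a\in\Sigma_1^+$ and has $b_\de(\var)\to0$.

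Next I would split this quantity using the invariance of $p_\var$,
\[
P_{\cX^\var}(n,\om)a-e_{J(\theta^n\om)}=P_{\cX^\var}(n,\om)\big(a-p_\var(\om)\big)+\big(p_\var(\theta^n\om)-e_{J(\theta^n\om)}\big),
\]
the first summand lying in $\Sigma_0$ because $a-p_\var(\om)\in\Sigma_0$ and $P_{\cX^\var}(n,\om)\Sigma_0\subset\Sigma_0$. For the first summand I would reuse the contraction mechanism from the proof of Lemma \ref{thm:markov-pert-simple-LE}: fixing $M\in\N$ with $\mu(\{N^+\le M\})>0$ and requiring $\var<1/M$, Lemma \ref{lem:sequence} provides a recurrence sequence $(n_k)$ along which $|P_{\cX^\var}(n_k,\om)v|\le (M\var)^{k-1}|v|$ for $v\in\Sigma_0$, and since $|P_{\cX^\var}(j,\cdot)|=1$ this persists for all $n\ge n_k$; hence for $n\ge n_{\ell+2}(\om)$ the first summand is at most $2(M\var)^{\ell+1}=\var^\ell\cdot O(\var)$. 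For the second summand I would invoke Lemma \ref{lem4n2}: on $\Om^{(\ell)}$ the coefficients $q^{(1)},\dots,q^{(\ell)}$ vanish, so for $\mu$-a.e.\ $\om'\in\Om^{(\ell)}$,
\[
p_\var(\om')-e_{J(\om')}=\sum\nolimits_{i=\ell+1}^m \frac{\var^i}{i!}\,q^{(i)}(\om')+\var^m\rho(\om',\var),\qquad \lim\nolimits_{\var\to0}\rho(\om',\var)=0,
\]
which is $\var^\ell\cdot o(1)$ pointwise (an empty sum when $\ell=m$).

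The hard part will be uniformity: both the coefficients $q^{(i)}$ and the remainder $\rho(\cdot,\var)$ are controlled only pointwise in $\om'$, whereas (ii) demands a single $b_\de(\var)\to0$ valid simultaneously for all $n\in E_\de(\om)$, i.e.\ at every orbit visit to $\Om^{(\ell)}$. I would resolve this by truncation and Egorov's theorem: given $0<\de<\mu(\Om^{(\ell)})$, choose $C<\infty$ so that $\{|q^{(\ell+1)}|\le C,\dots,|q^{(m)}|\le C\}$ has measure exceeding $\mu(\Om^{(\ell)})-\tfrac\de2$, and apply Egorov's theorem to the $\mu$-a.e.\ convergence $\rho(\cdot,\var)\to0$ to discard a further set of measure $<\tfrac\de2$. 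On the resulting $\Om'\subset\Om^{(\ell)}$ with $\mu(\Om')>\mu(\Om^{(\ell)})-\de$ the estimate $|p_\var(\om')-e_{J(\om')}|\le \var^\ell\tilde b(\var)$ then holds uniformly, with $\tilde b(\var)\to0$. Finally I would set $E_\de(\om)=\{n\ge n_{\ell+2}(\om):\theta^n\om\in\Om'\}$; the Birkhoff ergodic theorem gives $\liminf_{n}\#(E_\de(\om)\cap\{1,\dots,n\})/n=\mu(\Om')>\mu(\Om^{(\ell)})-\de$ (the finitely many indices below $n_{\ell+2}(\om)$ not affecting the density), which is (i). Combining the two summand bounds for $n\in E_\de(\om)$ yields $|P_{\cX^\var}(n,\om)a-e_{J(\theta^n\om)}|\le \var^\ell b_\de(\var)$ uniformly in $a\in\Sigma_1^+$, with $b_\de(\var)=2M^{\ell+1}\var+\tilde b(\var)\ge0$ and $b_\de(\var)\to0$; inserting this into the reduction of the first paragraph gives (ii). The degenerate case $\var=0$ is subsumed, since then both summands vanish identically for $n\ge n_{\ell+2}(\om)$ and synchronization is certain.
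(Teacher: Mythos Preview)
Your proof is correct and follows essentially the same route as the paper: Egorov on $\Om^{(\ell)}$ to make the $\var^\ell$-vanishing of $p_\var-e_J$ uniform on a large subset, Birkhoff for the density estimate (i), and Proposition \ref{lem:vj} plus independence for the reduction to (ii). The only notable difference is that you make the $\var$-uniformity of the contraction time explicit by revisiting Lemmas \ref{lem:sequence}--\ref{thm:markov-pert-simple-LE} to get the threshold $n_{\ell+2}(\om)$, whereas the paper simply cites Theorem \ref{thm:uniq_inv_distri} for an $N(\om)$ with $|p_{\cX^\var}(n,\om)-p_\var(\theta^n\om)|\le\var^{\ell+1}$ (leaving that uniformity implicit); your separate truncation of the $q^{(i)}$ before applying Egorov to $\rho$ is also a slight detour, since one may apply Egorov directly to the full quotient $(p_\var-e_J)/\var^\ell\to 0$ on $\Om^{(\ell)}$.
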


\smallskip

\begin{proof}
Fix $\var_0>0$ so small that all conclusions of Theorem
\ref{thm:uniq_inv_distri} hold whenever $0\le \var < \var_0$, and pick
any $0<\de < \mu \bigl( \Om^{(\ell)} \bigr)$. By Lemma \ref{lem4n2}, for $\mu$-a.e.\ $\om
\in \Om^{(\ell)}$,
\begin{equation}\label{lim}
\lim\nolimits_{\var\to0}\dfrac{p_\varepsilon(\omega)-
  e_{J(\om)}}{\var^\ell}=0\, ,
\end{equation}
so by Egorov's theorem there exists $C_{\delta} \subset \Om^{(\ell)}$
with $\mu (C_{\delta}) > \mu \bigl( \Om^{(\ell)} \bigr) - \de$ such that the
convergence in (\ref{lim}) is uniform on $C_{\de}$. Letting $b_{\de}
(\var) = \var + \sup_{\om \in C_{\de}} |p_{\var} (\om) -
e_{J(\om)}|/\var^{\ell}$ for $0<\var<\var_0$, note that $\lim_{\var
  \to 0} b_{\de} (\var) = 0=:b_{\de} (0)$, and
$$
|p_{\var} (\om) - e_{J(\om)}| \le \var^{\ell} (b_{\de} (\var) - \var)
\quad \forall 0\le  \var < \var_0 , \om \in C_{\de}\, .
$$
Also, by Theorem \ref{thm:uniq_inv_distri}(i) there exists $N(\om)\in
\N$ such that
$$
|p_{\cX^{\var}} (n,\om) - p_{\var} (\theta^n \om)| \le  \var^{\ell + 1}
\quad \forall n \ge  N(\om) \, .
$$
Let $E_{\de} (\om) = \{n\ge N(\om) : \theta^n\om \in C_{\de}\}$. Then,
for $\mu$-a.e.\ $\om \in \Om$ the set $E_{\de}(\om)\subset \N$
satisfies (i), and for every $n\in E_{\de}(\om)$,
$$ 
 {\P}\big\{X_{n}=(s_{J(\theta^{n}\om)},\theta^{n}\om)|X_0\in
 S\times\{\om\}\big\}  = p_{\cX^{\var}} (n,\om)_{J(\theta^n\om)} =
 1-\frac{| p_{\cX^\var}(n,\om)- e_{J(\theta^n\om)}|}{2} 
 \ge  1 - \frac{\var^{\ell} b_{\de} (\var)}{2} \, ,
$$
provided that $0\le \var<\var_0$. Consequently, for any two independent
copies $\cX$, $\cY$ of $\cX^{\var}$ with $0\le \var< \var_0$,
\begin{align*}
{\P} \big\{X_{n}=Y_{n}|X_0,Y_0\in S\times\{\om\}\big\} & \ge
{\P}\big\{X_{n}=Y_n =(s_{J(\theta^{n}\om)},\theta^{n}\om)|X_0, Y_0 \in
 S\times\{\om\}\big\}  \ge \left( 1 - \frac{\var^{\ell} b_{\de}
     (\var)}{2} \right)^2 \\
& \ge  1 -
\var^{\ell} b_{\de} (\var) 
\end{align*}
for every $n\in E_{\de} (\om)$, which proves (ii) with $\var_{\de} = \var_0$.
\end{proof}

\begin{remark}
As seen in the above proof, in Lemma \ref{thm:large-Syn} one may
stipulate that $\var_{\de}$ be independent of $\de$: Simply take
$\var_{\de} = \var_0$ with $\var_0$ so small that all conclusions of Theorem
\ref{thm:uniq_inv_distri} hold whenever $0\le \var < \var_0$. The wording
of Lemma \ref{thm:large-Syn} has been chosen for consistency with
Lemma \ref{lem:small-desyn} below, as well as Theorems \ref{thm:non-unif_syn} and \ref{thm:desyn}, where
$\var_{\de}$ does depend on $\de$.
\end{remark}

\begin{lemma}\label{lem:small-desyn}
Assume that $\mu \bigl( \Om^{(\ell-1)} \setminus\Om^{(\ell)}\bigr)>0$ for some
$1\le \ell \le m$. Then, for every
$0<\de<\mu \bigl( \Om^{(\ell - 1)}\setminus \Om^{(\ell)} \bigr)$ there exist
$\var_{\de}>0$, $c_{\de}>0$, and $F_\de:\Om\to2^{\N}$ such that for
$\mu$-a.e. $\om\in\Om$,
\smallskip
\begin{enumerate}
\item
  $\liminf\nolimits_{n\to\infty}\dfrac{\#(F_{\delta}(\om)\cap
    \{1,...,n\} )}{n}>\mu
  \bigl( \Om^{(\ell-1)}\setminus
  \Om^{(\ell)}\bigr) -\de$;\\[-1mm]
\item for any two independent copies $\cX,$ $\cY$ of $\cX^\var$ with $0\le\var<\var_\de$,
$$
 {\P}\big\{ X_{n}\neq Y_{n}|X_0,Y_0\in
 S\times\{\om\}\big\}\ge \var^{\ell}c_{\de} \quad\forall n\in F_\de(\om)\,  .
$$
\end{enumerate}
\end{lemma}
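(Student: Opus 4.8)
The plan is to follow the template of Lemma \ref{thm:large-Syn}, but now to work on $\Om^{(\ell-1)}\setminus\Om^{(\ell)}$ and to extract a \emph{lower} bound on the deviation of $p_{\var}$ from the synchronized state rather than an upper one. On this set $q^{(1)}=\ldots=q^{(\ell-1)}=0$ while $q^{(\ell)}\neq 0$, so the Taylor formula (\ref{eq4n3}) of Lemma \ref{lem4n2} collapses, for $\mu$-a.e.\ $\om\in\Om^{(\ell-1)}\setminus\Om^{(\ell)}$, to
$$
\lim_{\var\to 0}\frac{p_{\var}(\om)-e_{J(\om)}}{\var^{\ell}}=\frac{q^{(\ell)}(\om)}{\ell!}\neq 0\,.
$$
The whole argument hinges on promoting this pointwise, non-degenerate limit to a uniform quantitative lower bound on a set of nearly full measure.

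First I would fix $0<\eta<M<\infty$ so that $G:=\{\eta\le|q^{(\ell)}|\le M\}\cap(\Om^{(\ell-1)}\setminus\Om^{(\ell)})$ has measure exceeding $\mu(\Om^{(\ell-1)}\setminus\Om^{(\ell)})-\de/2$; this is possible because $|q^{(\ell)}|$ is finite and strictly positive $\mu$-a.e.\ on $\Om^{(\ell-1)}\setminus\Om^{(\ell)}$. Applying Egorov's theorem to the displayed convergence on $G$ produces $D_{\de}\subset G$ with $\mu(D_{\de})>\mu(\Om^{(\ell-1)}\setminus\Om^{(\ell)})-\de$ on which the convergence is uniform. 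Since $\eta\le|q^{(\ell)}|\le M$ on $D_{\de}$, uniformity then yields $c'>0$ and $\var_{\de}>0$ with
$$
c'\var^{\ell}\le|p_{\var}(\om)-e_{J(\om)}|\le\tfrac{2M}{\ell!}\var^{\ell}\qquad\forall\,\om\in D_{\de},\ 0<\var<\var_{\de}\,,
$$
the upper bound (using $|q^{(\ell)}|\le M$) guaranteeing in addition that $p_{\var}(\om)\to e_{J(\om)}$ \emph{uniformly} on $D_{\de}$ as $\var\to 0$.

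Next I would transport this estimate along the extrinsic noise and convert it into a statement about trajectories. Set $F_{\de}(\om)=\{n\ge N(\om):\theta^n\om\in D_{\de}\}$, where, exactly as in Lemma \ref{thm:large-Syn}, Theorem \ref{thm:uniq_inv_distri} supplies $N(\om)\in\N$ with $|p_{\cX^{\var}}(n,\om)-p_{\var}(\theta^n\om)|\le\var^{\ell+1}$ for all $n\ge N(\om)$. The Birkhoff ergodic theorem then gives (i), since deleting the finitely many indices below $N(\om)$ does not affect the density and $\liminf_n\#(F_{\de}(\om)\cap\{1,\ldots,n\})/n=\mu(D_{\de})$. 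For $n\in F_{\de}(\om)$ the triangle inequality and the lower bound above give $|p_{\cX^{\var}}(n,\om)-e_{J(\theta^n\om)}|\ge c'\var^{\ell}-\var^{\ell+1}\ge\tfrac12 c'\var^{\ell}$ once $\var_{\de}<c'/2$. Writing $j=J(\theta^n\om)$ and $p_j=p_{\cX^{\var}}(n,\om)_j$, Proposition \ref{lem:vj} turns this into $1-p_j=|p_{\cX^{\var}}(n,\om)-e_j|/2\ge\tfrac14 c'\var^{\ell}$, while the uniform convergence on $D_{\de}$ forces $p_j\ge\tfrac12$ for all small $\var$. For two independent copies $\cX,\cY$ the disjoint events $\{X_n=(s_j,\theta^n\om)\}\cap\{Y_n\neq(s_j,\theta^n\om)\}$ and $\{X_n\neq(s_j,\theta^n\om)\}\cap\{Y_n=(s_j,\theta^n\om)\}$ both force $X_n\neq Y_n$, so by independence
$$
\P\big\{X_n\neq Y_n\,|\,X_0,Y_0\in S\times\{\om\}\big\}\ge 2p_j(1-p_j)\ge 1-p_j\ge\tfrac14 c'\var^{\ell}\,,
$$
establishing (ii) with $c_{\de}=c'/4$.

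I expect the hard part to be the uniform lower bound of the second paragraph: one must simultaneously use the non-degeneracy $q^{(\ell)}\neq 0$ to keep $|q^{(\ell)}|$ bounded away from $0$ and Egorov's theorem to render the $o(\var^{\ell})$ remainder in (\ref{eq4n3}) uniform, and the hypothesis $\mu(\Om^{(\ell-1)}\setminus\Om^{(\ell)})>0$ is precisely what makes the resulting good set $D_{\de}$, and hence $F_{\de}$, of positive density. By contrast, the passage from distributions to trajectory desynchronization is an elementary computation; note that, unlike in Lemma \ref{thm:large-Syn}, it is cleanest here to lower-bound $\P\{X_n\neq Y_n\}$ directly through the two disjoint events above rather than through the full vector $p_{\cX^{\var}}(n,\om)$.
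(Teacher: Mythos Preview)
Your proposal is correct and follows essentially the same approach as the paper: both arguments restrict to $\Om^{(\ell-1)}\setminus\Om^{(\ell)}$, invoke Egorov's theorem together with two-sided bounds on $|q^{(\ell)}|$ to obtain a set $D_{\de}$ of nearly full relative measure on which $|p_{\var}-e_J|$ is uniformly comparable to $\var^{\ell}$, define $F_{\de}(\om)$ via visits of $\theta^n\om$ to $D_{\de}$ after a time $N(\om)$ at which $|p_{\cX^{\var}}(n,\om)-p_{\var}(\theta^n\om)|\le\var^{\ell+1}$, and then lower-bound $\P\{X_n\neq Y_n\}$ by $2p_j(1-p_j)$ with $j=J(\theta^n\om)$. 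The only cosmetic difference is bookkeeping: the paper chooses a single constant $c_{\de}\le 1/6$ with $2c_{\de}\le|q^{(\ell)}/\ell!|\le 2/c_{\de}$ on $D_{\de}$ and tracks explicit constants through the final inequality, whereas you separate the steps via $\eta,M,c'$ and the threshold $p_j\ge 1/2$.
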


\smallskip

\begin{proof}
Fix $\var_0>0$ as in the proof of Lemma \ref{thm:large-Syn}, and pick
any $0<\de < \mu \bigl( \Om^{(\ell -1 )} \setminus \Om^{(\ell)}\bigr)$. By Lemma
\ref{lem4n2}, for $\mu$-a.e.\ $\om \in \Om^{(\ell -1)}\setminus \Om^{(\ell)}$,
\begin{equation}\label{lim2}
\lim\nolimits_{\var\to0}\dfrac{p_\varepsilon(\omega)-
  e_{J(\om)}-\varepsilon^\ell q^{(\ell)}(\omega)/\ell
  !}{\var^\ell}=0\, ,
\end{equation}
so one can choose $D_{\de} \subset \Om^{(\ell -1)}\setminus \Om^{(\ell)}$ with
$\mu (D_{\de}) > \mu \bigl( \Om^{(\ell -1 )} \setminus \Om^{(\ell)}\bigr) -\de$
such that the convergence in (\ref{lim2}) is uniform on $D_{\de}$, but
also $0<c_{\de}\le 1/6$ such that $2c_{\de} \le |q^{(\ell)} (\om)/\ell!|
\le 2/c_{\de}$ for all $\om \in D_{\de}$. Let $0 < \var_{\de} \le \min
\{ \var_0 , c_{\de}^2\}$ be so small that
$$
\left|
p_{\var} (\om) - e_{J(\om)} - \frac{\var^{\ell} q^{(\ell)} (\om)}{\ell
!}
\right| < \var^{\ell} c_{\de}^2 \quad \forall 0\le  \var < \var_{\de},
\om \in D_{\de} \, .
$$
As in the proof of Lemma \ref{thm:large-Syn}, there exists $N(\om)\in
\N$ with
$$
|p_{\cX^{\var}} (n,\om) - p_{\var} (\theta^n \om)| \le \var^{\ell + 1}
\quad \forall n \ge N(\om) \, .
$$
Then $F_{\de} (\om):= \{n\ge N(\om) : \theta^n \om \in
D_{\de}\}\subset \N$ satisfies (i) for $\mu$-a.e.\ $\om \in
\Om$. Moreover, for every $n\in F_{\de} (\om)$,
\begin{align*}
 {\P}\big\{X_{n}=(s_{J(\theta^{n}\om)},\theta^{n}\om)|X_0\in
 S\times\{\om\}\big\}  & = p_{\cX^{\var}} (n,\om)_{J(\theta^n\om)} =
 1-\frac{| p_{\cX^\var}(n,\om)- e_{J(\theta^n\om)}|}{2} \\
&
 \ge  1 - \frac{\var^{\ell + 1} + c_{\de}^2 \var^{\ell} +
   2\var^{\ell}/c_{\de}}{2} \\
& \ge  1 - \var^{\ell} (c_{\de}^2 + 1/c_{\de}) \, ,
\end{align*}
provided that $0\le \var<\var_{\de}$, but also
\begin{align*}
 {\P}\big\{X_{n}\ne (s_{J(\theta^{n}\om)},\theta^{n}\om)|X_0\in
 S\times\{\om\}\big\}  & = \frac{| p_{\cX^\var}(n,\om)-
   e_{J(\theta^n\om)}|}{2} \ge  c_{\de} \var^{\ell} - \frac{\var^{\ell +
     1} + c_{\de}^2 \var^{\ell}}{2}\\
& \ge  \var^{\ell} c_{\de} (1-c_{\de}) \, ,
\end{align*}
and consequently, for any two independent copies $\cX$, $\cY$ of
$\cX^{\var}$ with $0\le \var < \var_{\de}$,
\begin{align*}
{\P}\big\{X_{n}\ne Y_n|X_0, Y_0 \in
 S\times\{\om\}\big\}  & \ge 2 \bigl( 1 - \var^{\ell} (c_{\de}^2 +
 1/c_{\de})\bigr) \var^{\ell} c_{\de} (1-c_{\de}) \\
& = \var^{\ell} \bigl( 2c_{\de} (1-c_{\de}) - 2\var^{\ell} (1-c_{\de})
(c_{\de}^3 + 1)
\bigr) \ge 2 \var^{\ell} c_{\de} (1-c_{\de})(1-2 c_{\de}) \\
& \ge \var^{\ell} c_{\de} \, ,
\end{align*}
which establishes (ii).
\end{proof}

By combining Lemmas \ref{thm:large-Syn} and \ref{lem:small-desyn}, it is now straightforward to
provide a

\begin{proof}[Proof of Theorem \ref{thm:desyn}]
Deduce from
$$
0 < 1 - \mu (\Om_{\rm deg}) = \sum\nolimits_{i=1}^m \mu
\bigl( \Om^{(i-1)}\setminus \Om^{(i)} \bigr)
$$
that $\mu \bigl( \Om^{(i-1)}\setminus \Om^{(i)} \bigr)>0$ for at least one $1\le i
\le m$, and so
\begin{eqnarray}\label{def:L}
L:= \bigl\{ 1\le i \le m : \mu \bigl( \Om^{(i-1)}\setminus
\Om^{(i)}\bigr) >0\bigr\} \ne \varnothing\, .
\end{eqnarray}
Letting $\ell = \max \, L$ for convenience, clearly $\mu \bigl(
\Om^{(\ell - 1)}\bigr)\ge \mu \bigl( \Om^{(\ell - 1)} \setminus
\Om^{(\ell)}\bigr) >0$ and $\mu \bigl( \Om^{(\ell)}\bigr) = \mu (\Om_{\rm
  deg})<1$, as well as $\sum_{i\in L} \mu \bigl( \Om^{(i-1)} \setminus
\Om^{(i)}\bigr)= 1 - \mu \bigl( \Om^{(\ell)}\bigr)$.

Assume first that $\mu \bigl( \Om^{(\ell)} \bigr) > 0$, and hence 
$0< \mu \bigl( \Om^{(\ell - 1)} \setminus
\Om^{(\ell)}\bigr) <1$. By Lemma
\ref{lem:small-desyn}, for every $i\in L$ and $0<\de < \mu \bigl(
\Om^{(i-1)} \setminus \Om^{(i)}\bigr)$ there exist $0< \var_{\de,i}\le
1$, $c_{\de,i}>0$, and $F_{\de,i}:\Om\to 2^{\N}$ such that for
$\mu$-a.e.\ $\om\in \Om$,
$$
\liminf\nolimits_{n\to \infty} \frac{\# (F_{\de,i} (\om)
  \cap\{1,\ldots, n\})}{n} > \mu \bigl( \Om^{(i-1)} \setminus
\Om^{(i)}\bigr) - \frac{\de}{m} \, ,
$$
and for any two independent copies $\cX,$ $\cY$ of $\cX^\var$ with $0\le\var<\var_{\de,i}$,
$$
 {\P}\big\{ X_{n}\neq Y_{n}|X_0,Y_0\in
 S\times\{\om\}\big\}\ge \var^i c_{\de,i} \ge \var^{\ell} c_{\de,i}
 \quad\forall n\in F_{\de,i} (\om)\,  .
$$
The sets $F_{\de,i}$ are disjoint by construction.
With this, for every $0<\de < \min_{i\in L} \mu \bigl(
\Om^{(i-1)}\setminus \Om^{(i)}\bigr)$, let $\var_{\de}' = \min_{i\in L} \var_{\de,i}>0$ and $c_{\de}=
\min_{i\in L} c_{\de,i}>0$, as well as $F_{\de}(\om) = \bigcup_{i\in
  L} F_{\de,i}(\om)$. Then,
$$
\liminf\nolimits_{n\to \infty} \frac{\# (F_{\de} (\om)
  \cap\{1,\ldots, n\})}{n} > 
\sum\nolimits_{i\in L} \left( \mu \bigl( \Om^{(i-1)} \setminus
\Om^{(i)}\bigr) - \frac{\de}{m} \right) \ge  1 - \mu \bigl(
\Om^{(\ell)}\bigr) - \de \, .
$$
By Lemma \ref{thm:large-Syn},
for every $0<\de < \mu \bigl( \Om^{(\ell)}\bigr)$ there exist
$\var_{\de,0}>0$, $ b_{\de,0}(\var )\ge 0$ with $\lim_{\var \to 0}
b_{\de,0}(\var) = 0=b_{\de,0}(0)$, and $E_{\de} :\Om \to 2^{\N}$ such that
for $\mu$-a.e.\ $\om \in \Om$,
$$
\liminf\nolimits_{n\to \infty} \frac{\# (E_{\de} (\om) \cap \{1, \ldots ,
  n\})}{n} > \mu \bigl( \Om^{(\ell)}\bigr) - \de \, ,
$$
and for any two independent copies $\cX, \cY$ of $\cX^{\var}$ with
$0\le \var < \var_{\de,0}$,
$$
{\P}\big\{ X_{n}=Y_{n}|X_0,Y_0\in S\times\{\om\}\big\}\ge
 1-\var^{\ell}b_{\de,0} (\var) \quad\forall n\in E_\de(\om)\, .
$$
Again, $E_{\de}(\om)\cap F_{\de}(\om) =\varnothing$ by construction,
provided that $\de < \min \bigl\{ \mu \bigl( \Om^{(\ell)}\bigr), \mu
\bigl( \Om^{(i-1)}\setminus \Om^{(i)}\bigr) : i \in L\bigr\}$.
Pick $\var_{\de,0}'>0$ so small that $b_{\de,0}(\var)\le c_{\de}$
whenever $0\le \var < \var_{\de,0}'$. With $\var_{\de}:= \min
\{\var_{\de}', \var_{\de,0}, \var_{\de,0}' \}>0$ and $b_{\de}:= c_{\de}$, therefore,
all assertions of the theorem are correct with $a= \mu \bigl( \Om^{(\ell)}\bigr)$. 

Next assume that $\mu\bigl( \Om^{(\ell)}\bigr)=0$ but $0<\mu
\bigl(\Om^{(\ell - 1)} \bigr) =\mu \bigl( \Om^{(\ell - 1)} \setminus
\Om^{(\ell)}\bigr) <1$. Then $\# L \ge 2$ and $\ell \ge 2$,
hence the same argument as before applies, and Theorem
\ref{thm:desyn} holds, with $\ell$ replaced by $\ell - 1\ge 1$ and $a=\mu \bigl(
\Om^{(\ell - 1)}\bigr)$.

It remains to consider the case of $\mu\bigl( \Om^{(\ell)}\bigr)=0$
but $\mu \bigl(\Om^{(\ell - 1)} \bigr) =\mu \bigl( \Om^{(\ell - 1)} \setminus
\Om^{(\ell)}\bigr) =1$, or equivalently $\# L = 1$. As in the proof of Lemma
\ref{lem:small-desyn}, for every $0<\de < 1$ one can
choose $C_{\de}\subset \Om$ with $\mu (C_{\de})>1-\de$ and
$0<c_{\de}\le 1/6$ such that $2c_{\de} \le |q^{(\ell)}(\om)/\ell !|\le 2/c_{\de}$
for all $\om \in C_{\de}$, and with $0<\var_{\de} \le \min \{ \var_0,
c_{\de}^2\}$ sufficiently small,
$$
\left| p_{\var} (\om) - e_{J(\om)} - \frac{\var^{\ell}
    q^{(\ell)}(\om)}{\ell !} \right| \le  \var^{\ell}  c_{\de}^2 
\quad \forall 0\le \var < \var_{\de}, \om \in C_{\de} \, .
$$
Also, there exists $N(\om)\in \N$ with
$$
|p_{\cX^{\var}} (n,\om) - p_{\var} (\theta^n \om)| \le  \var^{\ell+1} \quad
\forall n \ge N(\om) \, .
$$
The set $D_{\de} (\om):= \{n \ge N(\om) : \theta^n \om \in C_{\de}\}\subset \N$ satisfies
$$
\lim\nolimits_{n\to \infty} \frac{\# (D_{\de}(\om) \cap \{1, \ldots ,  n\})}{n} = \mu (C_{\de}) > 1 - \de 
$$
for $\mu$-a.e.\ $\om \in \Om$, and for every $n \in D_{\de}(\om)$,
\begin{align*}
 {\P}\big\{X_{n} =  (s_{J(\theta^{n}\om)},\theta^{n}\om) | X_0\in  S\times\{\om\}\big\} & \ge 1 - \var^{\ell} (c_{\de}^2 + 1/c_{\de}) \, , \\
 {\P}\big\{X_{n}\ne (s_{J(\theta^{n}\om)},\theta^{n}\om) | X_0\in
 S\times\{\om\}\big\} & \ge  \var^{\ell} c_{\de} (1 - c_{\de}) \, ,
\end{align*}
provided that $0\le \var<\var_{\de}$. Consequently, for any two
independent copies $\cX,\cY$ of $\cX^{\var}$ with $0\le \var< \var_{\de}$, and with $b_{\de}:= 2
(c_{\de}^2 + 1/c_{\de})>2+c_{\de}$,
\begin{align*}
 {\P}\big\{X_{n} =  Y_n |X_0,Y_0\in
 S\times\{\om\}\big\} & \ge  \bigl( 1 - \var^{\ell} (c_{\de}^2 +
 1/c_{\de})\bigr)^2 \ge  1 - \var^{\ell} b_{\de}  \, , \\
 {\P}\big\{X_{n}\ne  Y_n |X_0, Y_0 \in
 S\times\{\om\}\big\} & \ge  2 \bigl( 1 - \var^{\ell} (c_{\de}^2 +
 1/c_{\de})\bigr) \var^{\ell} c_{\de} (1 - c_{\de}) \ge  \var^{\ell} c_{\de}  \, ,
\end{align*}
whenever $n\in D_{\de}(\om)$. In this case, all assertions of the
theorem are correct with any $0<a<1$: Simply let $E_{\de} (\om)$ be a
subset of $D_{\de}(\om)$ with density $a\mu (C_{\de})> a- a\de > a -
\de$ and take $F_{\de}(\om) = D_{\de}(\om)\setminus E_{\de}(\om)$,
with density $(1-a)\mu (C_{\de}) > 1-a - (1-a)\de> 1-a - \de$.
\end{proof}

\begin{remark}
As the above proof shows, one may stipulate $b_{\de} = c_{\de}$ in
Theorem \ref{thm:desyn}, except when $\mu \bigl( \Om^{(i-1)}\setminus
\Om^{(i)}\bigr)=1$ for some (necessarily unique) $1\le i \le m$.
\end{remark}

For a simple corollary, recall the first-order
probability dissipation $d(\om)$ from Section \ref{sec4}. 

\begin{cor}
Let $\{\cX^\var\}$ be a $C^m$-Markov perturbation $(m\ge 1)$ of a
synchronized DRN $\cX^0$. If
$\mu (\{ \om \in \Om : d(\theta^{-1} \om) =0\})<1$ then the
conclusions of Theorem \ref{thm:desyn} hold with $\ell=1$
  and $a=\mu \bigl( \Om^{(1)}\bigr)$ if $\mu \bigl(
  \Om^{(1)}\bigr)>0$, or any $0<a<1$ if $\mu \bigl( \Om^{(1)}\bigr)=1$.
\end{cor}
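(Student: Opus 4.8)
The plan is to read the result off Theorem \ref{thm:desyn} by forcing the operative index to be $\ell = 1$, with Lemma \ref{lem4n4}(i) providing the passage from the dissipation hypothesis to non-degeneracy of the first-order term. First I would note that $\mu(\{d(\theta^{-1}\om) = 0\}) < 1$ is equivalent to $\mu(\{d(\theta^{-1}\om) \ne 0\}) > 0$, and that by Lemma \ref{lem4n4}(i) the inclusion $\{d(\theta^{-1}\om) \ne 0\} \subseteq \{q^{(1)} \ne 0\} = \Om \setminus \Om^{(1)}$ holds for $\mu$-a.e.\ $\om$. Hence $\mu(\Om^{(1)}) < 1$, so in particular $\mu(\Om_{\rm deg}) \le \mu(\Om^{(1)}) < 1$ and the non-degeneracy hypothesis of Theorem \ref{thm:desyn} is met; moreover the two cases $\mu(\Om^{(1)}) > 0$ and $\mu(\Om^{(1)}) = 0$ are exhaustive. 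The conceptual point is that the first-order term $q^{(1)}$ is already non-trivial on a set of positive measure, so $\ell = 1$ serves throughout in place of the general $\ell = \max L$.

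If $\mu(\Om^{(1)}) > 0$, then both $\Om^{(1)}$ and $\Om \setminus \Om^{(1)}$ carry positive $\mu$-mass, and I would invoke Lemma \ref{thm:large-Syn} and Lemma \ref{lem:small-desyn}, each with $\ell = 1$. The former produces a synchronization set $E_\de$, consisting of forward iterates landing in $\Om^{(1)}$, of density exceeding $\mu(\Om^{(1)}) - \de$ and with synchronization probability at least $1 - \var b_\de(\var)$; the latter produces a desynchronization set $F_\de$, consisting of iterates landing in $\Om \setminus \Om^{(1)}$, of density exceeding $1 - \mu(\Om^{(1)}) - \de$ and with desynchronization probability at least $\var c_\de$. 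These sets are disjoint by construction, and, exactly as in the proof of Theorem \ref{thm:desyn}, one selects a common $\var_\de$ and shrinks $\var$ so that $b_\de(\var) \le c_\de$, thereby replacing the vanishing function $b_\de(\var)$ by the constant $c_\de$. This delivers all the conclusions with $a = \mu(\Om^{(1)})$.

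In the remaining case $\mu(\Om^{(1)}) = 0$, so that $q^{(1)} \ne 0$ holds $\mu$-a.e.\ and desynchronization is pervasive, I would reproduce the final ($\# L = 1$) argument from the proof of Theorem \ref{thm:desyn}. Using Lemma \ref{lem4n2} together with Egorov's theorem, one picks a set $C_\de$ with $\mu(C_\de) > 1 - \de$ on which the Taylor estimate is uniform and $2c_\de \le |q^{(1)}(\om)| \le 2/c_\de$, obtaining a single large-density set $D_\de$ on which both the synchronization bound $1 - \var b_\de$ and the desynchronization bound $\var c_\de$ hold; splitting $D_\de$ into disjoint pieces of densities approximately $a$ and $1 - a$ then accommodates any prescribed $0 < a < 1$. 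The whole argument is routine given the earlier lemmas, and I expect no genuine obstacle: the single point requiring care is the bookkeeping of constants when the synchronization and desynchronization estimates are merged, the hypothesis having been tailored precisely so that $\ell = 1$ is the effective order through the pull-back formula \eqref{eq4n5} expressing $q^{(1)}$ in terms of $d$.
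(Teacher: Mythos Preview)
Your proposal is correct and follows essentially the same approach as the paper: both use Lemma \ref{lem4n4}(i) to deduce $\mu(\Om^{(1)})<1$ from the dissipation hypothesis, and then appeal to the arguments from the proof of Theorem \ref{thm:desyn} (Lemmas \ref{thm:large-Syn} and \ref{lem:small-desyn} with $\ell=1$ when $\mu(\Om^{(1)})>0$, and the $\#L=1$ case when $\mu(\Om^{(1)})=0$). You have simply spelled out in more detail what the paper's one-line ``same arguments as in the proof of Theorem \ref{thm:desyn}'' amounts to, and you correctly read the second case as $\mu(\Om^{(1)})=0$ rather than the evident misprint $\mu(\Om^{(1)})=1$ in the statement.
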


\begin{proof}
By Lemma \ref{lem4n4}(i), $q^{(1)}\ne 0$ unless
$d(\theta^{-1}\om)=0$. Thus $\mu \bigl( \Om^{(1)}\bigr)<1$, and the
same arguments as in the proof of Theorem \ref{thm:desyn} show that
$a=\mu\bigl( \Om^{(1)}\bigr)$, provided that $\mu \bigl(\Om^{(1)}
\bigr)>0$, and otherwise $0<a<1$ is arbitrary.
\end{proof}

\section{An example}\label{sec6n}

This short final section illustrates some of the concepts and results
of this work in the context of a concrete random network. Throughout,
fix $k\ge 2$ and recall from Section \ref{sec2} that $\cM_{1,{\rm
    det}}^+$ denotes the set of all deterministic stochastic $k\times
k$-matrices; for convenience, henceforth write $\cM_{1,{\rm det}}^+$
as $\cM$. Every $A\in \cM$ corresponds to a unique map
$T_A$ of $S = \{ s_1, \ldots, s_k\}$ into itself in a natural way, via
\begin{equation}\label{eq6nm1}
    T_{A} (s_j)=s_i \quad {\text {if and only if}} \quad
    A_{i,j}=1\, .
\end{equation}
Note that $\# T_A S = \mbox{\rm rank}\, A$. For every $\ell \in \N$
consider
$$
\cM_{[\ell]} := \{A\in \cM : \mbox{\rm rank}\, A = \ell \, \} \, .
$$
Obviously, $\cM_{[\ell]} = \varnothing$ whenever $\ell > k$, and $\cM$ is
the disjoint union of $\cM_{[1]}, \ldots , \cM_{[k]}$. Moreover, $\#\cM = k^k$
whereas 
$$
\# \cM_{[\ell]}  = \left( \!\! \begin{array}{c} k \\
    \ell \end{array}\!\! \right) \sum\nolimits_{j=1}^{\ell} (-1)^{\ell -
  j} j^k 
\left( \!\! \begin{array}{c} \ell \\ j  \end{array}\!\! \right) \quad
\forall \ell \in \N \, ;
$$
in particular, $\# \cM_{[1]} = k$ and $\# \cM_{[k]} = k!$; see also
Figure \ref{fig:rw}. Let $\nu$ be the
uniform distribution on $\cM$, that is, let $\nu (\{A\})=1/k^{k}$ for
every $A\in \cM$. (The reader will notice that all subsequent
observations remain virtually unchanged as long as, more generally,
$\nu(\{A\})>0$ for every $A\in \cM$.) With these ingredients, consider
the metric dynamical system $\Theta=(\Om,\cF,\mu,\theta)$, where the
probability space is
$$
(\Om , \cF, \mu) = \bigotimes\nolimits_{z\in\Z} (\cM , 2^{\cM}, \nu)
\, ,
$$
and the map $\theta$ is the left shift
$$
\theta \bigl( (A_z)_{z\in \Z}\bigr) = (A_{z+1})_{z\in \Z} \quad
\forall (A_z)_{z\in \Z} \in \Om \, .
$$
The dynamical system $\Theta$, an example of a {\em Bernoulli shift}, is invertible
and ergodic \cite{walters82}. In probability theory parlance, $\Theta$
provides the canonical model of a (bi-infinite) sequence of i.i.d
random variables uniformly distributed on $\cM$. Note that every
$\om\in \Om$ has the form $(A_z)_{z\in\Z}$, with $A_z \in \cM$ for all
$z$, and is often written as such in what follows.

\medskip

\begin{figure}[ht] 
\includegraphics[height=6.5cm]{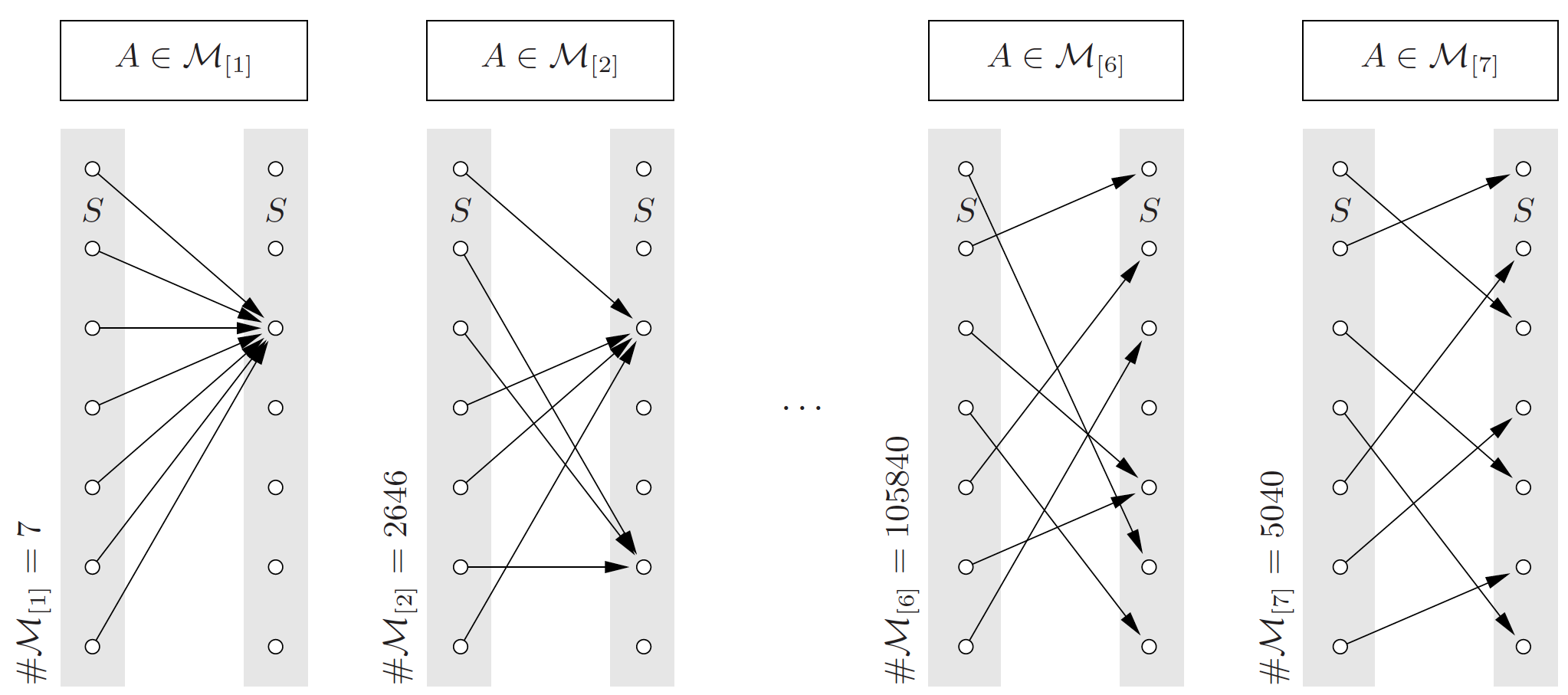}
\caption{Illustrating typical maps $T_A:S\to S$ associated with $A\in \cM$ via (\ref{eq6nm1}), for $k=7$ where $\# \cM = 823543$.}\label{fig:rw}
\end{figure}

To define a DRN $\cX^0$ over $\Theta$, let
\begin{equation}\label{eq6n0}
T_{\cX^0} (0,\om) = {\rm id}_S \, , \quad T_{\cX^0} (n,\om) =
T_{A_{n-1}} \circ \cdots \circ T_{A_0} \quad \forall n \in \N, \om  \in \Om \, ,
\end{equation}
the transition cocycle of which simply is
$$
P^0(n,\om) = \left\{
\begin{array}{ll}
I_k & \mbox{\rm if } n= 0 \, , \\
A_{n-1} \cdots A_0 & \mbox{\rm if } n\ge 1 \, .
\end{array}
\right.
$$
Given $\om \in \Om$, notice that the sequence $\bigl(
\mbox{\rm rank}\, P^0(n,\om) \bigr)_{n\in \N_0}$ in $\K$ is
non-increasing and hence convergent. In particular, if $A_m\in \cM_{[1]}$ for some $m\in \N_0$ then $\mbox{\rm
  rank}\, P^0(n,\om)=1$ for all $n>m$. Since $\nu (\cM_{[1]})>0$, this
occurs for $\mu$-a.e.\ $\om\in \Om$, and the function
$N_0^+:\Om \to \N$ given by
$$
N_0^+(\om) = \left\{
\begin{array}{ll}
\min \{n\in \N: \mbox{\rm rank}\, P^0(n,\om) = 1\} & \mbox{\rm if }
\lim_{n\to \infty} \mbox{\rm rank}\, P^0(n,\om) = 1 \, , \\[1mm]
1 & \mbox{\rm otherwise} \, ,
\end{array}
\right.
$$
is well-defined. Plainly, $\mbox{\rm rank}\, P^0(n,\om) = 1$ for $\mu$-a.e.\
$\om \in \Om$ and all $n\ge N_0^+(\om)$, and hence $N_0^+$ is a
forward synchronization time of $\cX^0$. In fact, $N_0^+\le N^+$ for
every forward synchronization time $N^+$ of $\cX^0$; in particular, $\cX^0$ is synchronized. Notice that, on the one hand,
$N_0^+$ is unbounded, since for every $n\in\N$,
$$
\mu (\{N_0^+ \ge n\}) \ge \mu \bigl( \{ \om  : A_0, \ldots,
A_{n-2} \in \cM_{[k]}\} \bigr) = \nu (\cM_{[k]})^{n-1} > 0 \, .
$$
On the other hand,
$$
\mu (\{N_0^+ \ge n\}) \le \mu \bigl( \{ \om  : A_0, \ldots,
A_{n-2} \not \in \cM_{[1]}\} \bigr) = \bigl( 1 - \nu (\cM_{[1]})\bigr)^{n-1}
\, , 
$$
and consequently $\int_{\Om} N_0^+ \, {\rm d}\mu \le
\sum_{n=1}^{\infty} \bigl( 1 - \nu (\cM_{[1]})\bigr)^{n-1} = 1/\nu
(\cM_{[1]})<\infty$.

In a completely analogous manner, $\bigl( \mbox{\rm rank}\, P^0
(n,\theta^{-n}\om)\bigr)_{n\in \N_0}$ is non-increasing for every $\om
\in \Om$, and $N_0^-:\Om\to \N$ given by
$$
N_0^-(\om) = \left\{
\begin{array}{ll}
\min \{n\in \N: \mbox{\rm rank}\, P^0(n,\theta^{-n}\om) = 1\} & \mbox{\rm if }
\lim_{n\to \infty} \mbox{\rm rank}\, P^0(n,\theta^{-n}\om) = 1 \, , \\[1mm]
1 & \mbox{\rm otherwise} \, ,
\end{array}
\right.
$$
is a pull-back synchronization time of $\cX^0$, with $N_0^- \le N^-$
for every pull-back synchronization time $N^-$ of $\cX^0$. It is
readily seen that for every $n\in \N$ the sets $\{N_0^- \ge n\}$ and
$\theta^{n-1}\{N_0^+ \ge n\}$ differ only by a $\mu$-nullset, and so
$\mu (\{N_0^- = n\}) = \mu (\{ N_0^+ = n\})$, even though clearly $\mu
(\{ N_0^- \ne N_0^+\})\ge 2\nu (\cM_{[1]}) \bigl( 1 - \nu (\cM_{[1]})\bigr)>0$. As far
as they pertain to the dynamics of $\cX^0$, the above observations yield

\begin{prop}\label{prop6_1}
The DRN $\cX^0$ defined in {\rm (\ref{eq6n0})} is synchronized, with
synchronization index $J:\Om\to \K$ given by
$e_J(\om) = \lim_{n\to \infty} A_{-1} \cdots A_{-n} e_1$
for $\mu$-a.e.\ $\om = (A_{z})_{z\in \Z} \in \Om$. An $\cF$-measurable
function $N:\Om \to \N$ is a forward (or pull-back) synchronization
time of $\cX^0$ if and only if $N\ge N_0^+$ (or $N\ge N_0^-$)
$\mu$-a.e.\ on $\Om$; in particular, $\cX^0$ is not uniformly synchronized. 
\end{prop}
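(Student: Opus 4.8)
The plan is to build on the preparatory facts assembled immediately before the statement: the rank sequences $(\mbox{\rm rank}\,P^0(n,\om))_n$ and $(\mbox{\rm rank}\,P^0(n,\theta^{-n}\om))_n$ are non-increasing and eventually equal to $1$; the functions $N_0^+$ and $N_0^-$ are the minimal forward and pull-back synchronization times (so $N_0^+\le N^+$ and $N_0^-\le N^-$ for any such times); and $N_0^+$ is essentially unbounded. Given these, synchronization of $\cX^0$ is already in hand, and the three remaining claims follow with little extra work.

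First I would pin down the synchronization index. Writing $\om=(A_z)_{z\in\Z}$ and reading off the coordinates of $\theta^{-n}\om$ gives $P^0(n,\theta^{-n}\om)=A_{-1}A_{-2}\cdots A_{-n}$. For $n\ge N_0^-(\om)$ this product has rank $1$, hence lies in $\cM_{[1]}$ and sends every $e_j$ to a single basis vector, which by Proposition \ref{cor:A-syn} must be $e_{J(\om)}$. Appending one more factor $A_{-n-1}$ does not change the image (it first moves $e_1$ to some $e_{i'}$, which the rank-one product then sends to $e_{J(\om)}$), so $A_{-1}\cdots A_{-n}e_1$ is eventually constant equal to $e_{J(\om)}$; letting $n\to\infty$ yields the asserted formula for $\mu$-a.e.\ $\om$.

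Next, for the characterization of synchronization times, one inclusion is exactly the minimality already recorded: any forward (resp.\ pull-back) synchronization time dominates $N_0^+$ (resp.\ $N_0^-$) $\mu$-a.e. For the converse I would argue that $N\ge N_0^+$ $\mu$-a.e.\ suffices: for $n\ge N(\om)\ge N_0^+(\om)$ one has $\mbox{\rm rank}\,P^0(n,\om)=1$, so $P^0(n,\om)$ collapses all $e_j$ to one state, and since $P^0(n,\om)e_{J(\om)}=e_{J(\theta^n\om)}$ by invariance that state is $e_{J(\theta^n\om)}$ --- precisely the defining condition of Proposition \ref{cor:A-syn}. The pull-back case is verbatim, with $N_0^-$ and $P^0(n,\theta^{-n}\om)$ in place of $N_0^+$ and $P^0(n,\om)$.

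Finally, non-uniform synchronization is immediate from the two previous points: every synchronization time $N$ satisfies $N\ge N_0^+$ $\mu$-a.e., while $\mu(\{N_0^+\ge n\})\ge\nu(\cM_{[k]})^{n-1}>0$ for all $n$ shows that $N_0^+$ is not essentially bounded; hence no synchronization time can be constant $\mu$-a.e. The only steps requiring genuine care are the index bookkeeping in $P^0(n,\theta^{-n}\om)=A_{-1}\cdots A_{-n}$ and the accompanying stabilization argument for the rank-one image --- everything else is a direct reading of the preparatory observations, so I expect no serious obstacle beyond getting these indices right.
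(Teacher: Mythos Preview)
Your proposal is correct and follows exactly the route the paper intends: the proposition is stated without a separate proof, as a direct consequence of the observations preceding it, and your write-up simply fills in those details (the index computation $P^0(n,\theta^{-n}\om)=A_{-1}\cdots A_{-n}$, the rank-one stabilization giving $e_{J(\om)}$, the two-sided characterization of synchronization times via minimality of $N_0^{\pm}$, and the essential unboundedness of $N_0^+$ ruling out a constant synchronization time).
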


To construct a Markov perturbation of $\cX^0$, recall from Section
\ref{sec2} that $\cM_0$ denotes the linear space of all real
zero-column-sum $k\times k$-matrices, and consider any function $f:\cM
\to \cM_0$. Notice that $A+\var f(A)\in \cM_1^+$ for all sufficiently
small $\var\ge 0$ if and only if
\begin{equation}\label{eq6n1}
f(A)_{i,j} (1 - 2A_{i,j}) \ge 0 \quad \forall i,j\in \K \, .
\end{equation}
Motivated by (\ref{eq6n1}), fix any $f:\cM \to \cM_0$ with the
property that for each $A\in \cM$ either $f(A)=0$, or else
\begin{equation}\label{eq6n2}
f(A)_{i,j} (1 - 2A_{i,j}) > 0 \quad \forall i,j\in \K \, ;
\end{equation}
assume w.l.o.g.\ that $|f(A)|\le 1$ as well. For every $\var \ge 0$, $n\in \N$, and
$\om \in \Om$, letting
\begin{equation}\label{eq6n3}
P_{\cX^{\var}} (n,\om) = \bigl( A_{n-1} + \min \{ \var, 1\}
f(A_{n-1})\bigr) \cdots \bigl( A_{0} + \min \{ \var, 1\}
f(A_{0})\bigr) 
\end{equation}
then yields an MRN $\cX^{\var}$, in fact a $C^{\infty}$-Markov
perturbation of $\cX^0$. Notice that by the strict inequality in
(\ref{eq6n2}), each state of $X_n$ in $S\times \{\theta^n \om\}$ has
a positive probability of leading to any state of $X_{n+1}$ in
$S\times \{\theta^{n+1}\om\}$, unless $f(A_n)=0$. Regardless of the
specific choice of $f$, Theorems \ref{thm:uniq_inv_distri} and
\ref{thm:non-unif_syn} apply. Moreover, deduce from
$P_{\cX^{\var}}^{(1)}(1,\om) = f(A_0)$ and (\ref{eq4n4n}) that
$$
P_{\cX^{\var}}^{(1)}(n,\om) =\sum\nolimits_{\ell = 0}^{n-1} A_{n-1}
\cdots A_{\ell + 1} f(A_{\ell})A_{\ell - 1} \cdots A_0 \, ,
$$
and hence the quantities $d$ and $q^{(1)}$ introduced in Section
\ref{sec4} now read
\begin{equation}\label{eq6n4}
d(\om) = f(A_0)e_{J(\om)}\, , \quad q^{(1)}(\om) =\sum\nolimits_{\ell = 0}^{N_0^-(\om)-1} A_{-1}
\cdots A_{\ell + 1- N_0^-(\om)} f(A_{\ell - N_0^-(\om)})e_{J(\theta^{\ell - N_0^-(\om)}\om)} \, .
\end{equation}
Also, the set $S_{\bullet}\subset S$ is
$S_{\bullet}(\om) = \{s_j: A_0 e_j = e_{J(\theta \om) }\}$, and hence
$S_{\bullet}(\om)\ne S$ unless $A_0 \in \cM_{[1]}$. Since either all
components of $d(\om)$ are zero or else none is, by virtue of (\ref{eq6n2}),
$$
\Om_{\bullet} = \{ \om : f(A_0) = 0 \: \mbox{\rm or}\: A_1
\in \cM_{[1]} \} \, .
$$
Finally, by utilizing (\ref{eq6n4}) the set of first-order degeneracy
$\Om^{(1)} = \{q^{(1)}=0\}$ considered in Section \ref{sec5} can be
described explicitly also.

\begin{prop}
Let $\cX^{\var}$ be the MRN defined in {\rm (\ref{eq6n3})}. Then, for
all sufficiently small $\var>0$,
$$
\Om^{(1)} = \{ \om = (A_z)_{z\in \Z} : f(A_{-1}) = \ldots =
f(A_{-N_0^-(\om)}) = 0\} \, ,
$$ 
up to a $\mu$-nullset.
\end{prop}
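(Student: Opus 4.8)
The plan is to read the proposition directly off the explicit formula (\ref{eq6n4}) for $q^{(1)}$, combined with the strict sign structure imposed by (\ref{eq6n2}) and the rank bound built into the definition of $N_0^-$. Write $v_\ell(\om) := P^0(\ell,\theta^{-\ell}\om)\,f(A_{-\ell-1})\,e_{J(\theta^{-\ell-1}\om)}$ for $0\le \ell\le N_0^-(\om)-1$; specializing the general identity (\ref{eq4n5}) to the present example gives $q^{(1)}(\om)=\sum_{\ell=0}^{N_0^-(\om)-1} v_\ell(\om)$, as recorded in (\ref{eq6n4}). As in the proof of Lemma \ref{lem4n4}, each $v_\ell(\om)$ lies in $\Sigma_0$, with every component non-negative except the $J(\om)$-th, which is non-positive. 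Since all the $v_\ell(\om)$ share this single sign pattern, no cancellation between them is possible, so $q^{(1)}(\om)=0$ if and only if $v_\ell(\om)=0$ for every $\ell\in\{0,\ldots,N_0^-(\om)-1\}$. It therefore suffices to establish, for each such $\ell$ and $\mu$-a.e.\ $\om$, the equivalence $v_\ell(\om)=0 \Leftrightarrow f(A_{-\ell-1})=0$; relabelling $j=\ell+1\in\{1,\ldots,N_0^-(\om)\}$ then yields exactly the asserted description of $\Om^{(1)}$.

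One implication is immediate: if $f(A_{-\ell-1})=0$ then trivially $v_\ell(\om)=0$. For the converse I would first analyze the vector $d(\theta^{-\ell-1}\om)=f(A_{-\ell-1})\,e_{J(\theta^{-\ell-1}\om)}$, i.e.\ the $J(\theta^{-\ell-1}\om)$-th column of $f(A_{-\ell-1})$. Since $A_{-\ell-1}$ is a deterministic stochastic matrix whose unique $1$ in that column sits at row $J(\theta^{-\ell}\om)$ (by the invariance in Proposition \ref{cor:A-syn}, as $A_{-\ell-1}e_{J(\theta^{-\ell-1}\om)}=e_{J(\theta^{-\ell}\om)}$), condition (\ref{eq6n2}) forces the component of $d(\theta^{-\ell-1}\om)$ at $J(\theta^{-\ell}\om)$ to be strictly negative and all remaining components to be strictly positive whenever $f(A_{-\ell-1})\ne 0$. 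Thus $d(\theta^{-\ell-1}\om)$ is a nonzero vector in $\Sigma_0$ with a sharp sign pattern.

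It then remains to show that $P^0(\ell,\theta^{-\ell}\om)$ cannot annihilate this vector. Let $\sigma:\K\to\K$ be the map with $P^0(\ell,\theta^{-\ell}\om)e_i=e_{\sigma(i)}$; by invariance $\sigma(J(\theta^{-\ell}\om))=J(\om)$. If $v_\ell(\om)=0$ then for every target $t$ the sum $\sum_{i\in\sigma^{-1}(t)} d(\theta^{-\ell-1}\om)_i$ vanishes. For $t\ne J(\om)$ this sum runs over indices $i\ne J(\theta^{-\ell}\om)$ only, hence over strictly positive entries, forcing $\sigma^{-1}(t)=\varnothing$; therefore $\sigma\equiv J(\om)$, i.e.\ $\mbox{\rm rank}\,P^0(\ell,\theta^{-\ell}\om)=1$. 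But $\ell<N_0^-(\om)$ forces $\mbox{\rm rank}\,P^0(\ell,\theta^{-\ell}\om)\ge 2$, by the monotonicity of the rank and the minimality defining $N_0^-$ recorded in Proposition \ref{prop6_1}. This contradiction gives $v_\ell(\om)\ne 0$, completing the equivalence. All identities used — the formula (\ref{eq6n4}), the invariance of $J$, and the value of $N_0^-$ — hold only for $\mu$-a.e.\ $\om$, which accounts for the qualifier ``up to a $\mu$-nullset''; smallness of $\var$ is needed only so that (\ref{eq6n3}) defines a genuine Markov perturbation, since $q^{(1)}$, and hence $\Om^{(1)}$, does not depend on $\var$.

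The main obstacle is precisely the converse implication for $\ell\ge 1$: a priori the deterministic matrix $P^0(\ell,\theta^{-\ell}\om)$ is far from injective and could conceivably collapse the strictly-signed vector $d(\theta^{-\ell-1}\om)$ to $0$. Ruling this out is where the two special features of the example are indispensable — the strict inequalities (\ref{eq6n2}), guaranteeing that every non-synchronizing component of $d$ is genuinely positive rather than merely non-negative, and the lower bound $\mbox{\rm rank}\,P^0(\ell,\theta^{-\ell}\om)\ge 2$, available exactly because $\ell$ stays strictly below the pull-back synchronization time $N_0^-(\om)$.
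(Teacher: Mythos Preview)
The paper does not give a formal proof of this proposition; it simply remarks that the description of $\Om^{(1)}$ follows ``by utilizing (\ref{eq6n4})'' and then states the result. Your argument supplies precisely the details the paper leaves implicit: the sign pattern of each summand in (\ref{eq4n5}) (borrowed from the proof of Lemma \ref{lem4n4}) to reduce $q^{(1)}=0$ to the vanishing of every $v_\ell$, the strict inequalities (\ref{eq6n2}) to give $d(\theta^{-\ell-1}\om)$ a sharp sign pattern when $f(A_{-\ell-1})\ne 0$, and the rank bound from the minimality of $N_0^-$ to rule out annihilation by $P^0(\ell,\theta^{-\ell}\om)$. This is exactly the intended route, carried out correctly.
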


Notice that the value of $\mu\bigl( \Om^{(1)}\bigr)\le \nu (\{ f=0\})$
is completely determined by the set $\{f=0\}\subset \cM$. For
instance, $\mu \bigl( \Om^{(1)}\bigr)=1$ if and only if $\{f=0\}=\cM$,
and $\mu \bigl( \Om^{(1)}\bigr)= \nu (\{f=0\})$ if $\{f=0\}\subset \cM_{[1]}$.
Apart from trivial situations like these, computing the exact value
of $\mu (\Om^{(1)})$ may be a challenge. (In probability theory parlance,
$\mu\bigl( \Om^{(1)}\bigr)$ equals the probability that for a sequence
$(A_1,A_2, \ldots)$ of matrices, chosen independently and uniformly from $\cM$,
the product $A_n\cdots A_1$ attains rank $1$ {\em before\/} $f(A_n)\ne
0$ for the first time.) Regardless of the exact value, however, Theorem \ref{thm:desyn} applies with $\ell = 1$ whenever $\{f=0\}\ne
\cM$. Provided that $f:\cM\to\cM_0$ is not identically zero (and satisfies
(\ref{eq6n2}) unless $f(A)=0$), therefore, the MRN $\cX^{\var}$
defined in (\ref{eq6n3}) for
all sufficiently small $\var>0$ does exhibit the intermittency between
high-probability synchronization and low-probability desynchronization
established in that theorem.

\section*{Acknowledgement}

The authors are grateful to an anonymous referee whose thoughtful
comments helped improve the exposition.

\bibliography{myref}
\bibliographystyle{amsplain}

\end{document}